\DeclareMathOperator\dom{dom}
\DeclareMathOperator\Tr{Tr}
\DeclareMathOperator\lin{lin}
\newcommand*\D{\mathop{}\!\mathrm{d}}
\newcommand*\E{\mathop{}\!\mathrm{e}}
\newcommand*\I{\mathop{}\!\mathrm{i}}
\newtheorem{theorem}{Theorem}[section]
\newtheorem*{theorem*}{Theorem}
\newtheorem{lemma}[theorem]{Lemma}
\newtheorem{proposition}[theorem]{Proposition}
\newtheorem{corollary}[theorem]{Corollary}
\newtheorem*{corollary*}{Corollary}
\theoremstyle{plain}
\theoremstyle{definition}
\newtheorem{definition}[theorem]{Definition}
\newtheorem*{definition*}{Definition}
\newtheoremstyle{remark}
  {.2\baselineskip}
  {.2\baselineskip}
  {\normalfont}
  {}
  {\bfseries}
  {\ifx\thmnote\@gobble.\else\normalfont.\fi}
  {.5em}
  {}
\theoremstyle{remark}
\newtheorem{remark}[theorem]{Remark}
\newtheoremstyle{example}
  {.3\baselineskip}
  {.3\baselineskip}
  {\normalsize}  
  {0pt}       
  {\bfseries} 
  {.}         
  {5pt plus 1pt minus 1pt} 
  {}          
\theoremstyle{example}
\newtheorem*{assumption*}{\assumptionnumber}
\providecommand{\assumptionnumber}{}
\newenvironment{assumption}[2]
 {%
  \renewcommand{\assumptionnumber}{Assumption #1$\mathfrak{#2}$}%
  \begin{assumption*}%
  \protected@edef\@currentlabel{#1$\mathfrak{#2}$}%
 }
 {%
  \end{assumption*}
 }
\newlist{theoremenum}{enumerate}{1}
\setlist[theoremenum]{label=\roman*), ref=\textup{\thetheorem~\roman*)}}
\newlist{propenum}{enumerate}{1}
\setlist[propenum]{label=\roman*), ref=\textup{\theproposition~\roman*)}}
\newlist{defenum}{enumerate}{1}
\setlist[defenum]{label=\roman*), ref=\textup{\thedefinition~\roman*)}}
\newlist{lemenum}{enumerate}{1}
\setlist[lemenum]{label=\roman*), ref=\textup{\thelemma~\roman*)}}
\newlist{remenum}{enumerate}{1}
\setlist[remenum]{label=\roman*), ref=\textup{\theremark~\roman*)}}
\renewcommand{\MR}{\mathbb{R}}
\newcommand{\MC}{\mathbb{C}}
\newcommand{\MN}{\mathbb{N}}
\newcommand{\MP}{\mathbb{P}}
\newcommand{\MS}{\mathbb{S}}
\newcommand{\MF}{\mathbb{F}}
\newcommand{\cF}{\mathcal{F}}
\newcommand{\cB}{\mathcal{B}}
\newcommand{\cD}{\mathcal{D}}
\newcommand{\cE}{\mathcal{E}}
\newcommand{\cH}{\mathcal{H}}
\newcommand{\cL}{\mathcal{L}}
\newcommand{\cV}{\mathcal{V}}
\newcommand{\cG}{\mathcal{G}}
\newcommand{\sP}{\mathsf{P}}
\newcommand{\df}{\coloneqq}
\newcommand{\one}{\mathbbm{1}}
\newcommand{\interior}[1]{{\kern0pt#1}^{\mathrm{o}}}
\newcommand{\set}[1]{\left\{ #1\right\}}
\newcommand{\norm}[1]{\|#1\|}
\newcommand{\EXspec}[2]{\mathbb{E}_{#1}\left[#2\right]}
\newcounter{Task}\setcounter{Task}{1}
\newcommand{\cHplus}{\cH^{+}}
\newcommand{\cHpluso}{\cHplus\setminus \{0\}}
\newcommand{\MRplus}{\MR^{+}}
\newcommand{\dm}{m(\D\xi)}
\newcommand{\dmu}{\mu(\D\xi)}
\newcommand{\sgcsven}{\begin{color}{blue}}
\newcommand{\cgssven}{\end{color}}
\newcommand{\be}{\mathbf{e}}
\newcommand{\bP}{\mathbf{P}}
\begin{document}

\title[]{Finite-rank approximation of affine processes on positive
  Hilbert-Schmidt operators}    
\author{Sven Karbach}
\address{Department of Mathematics, University of Hamburg, Germany} 
\email{sven@karbach.org}
\thanks{The research for publication of this work received financial
  assistance from \emph{The Dutch Research Council} (NWO)
  (Grant No: C.2327.0099)}
\keywords{Affine processes, Galerkin approximation, operator-valued Riccati
  equation, Hilbert-Schmidt operator valued processes, finite-rank approximation} 

\begin{abstract}
  In this article, we present a method for approximating affine processes on the
  cone of positive Hilbert-Schmidt operators using matrix-valued affine
  processes. By leveraging results from the theory on affine processes
  with values in the cone of symmetric and positive semi-definite matrices, we construct
  sequences of finite-rank operator-valued affine processes that converge weakly
  to the target processes and provide convergence rates for their Laplace
  transforms using Galerkin approximations of the associated operator-valued
  generalized Riccati equations. This article not only offers a practical
  approximation scheme for operator-valued affine processes with error bounds
  that hold uniformly in time, but also provides a novel existence proof for this
  class of affine processes with c\`adl\`ag paths, including affine pure-jump
  processes with infinite variation and state-dependent jump
  intensities. In addition to the theoretical significance, the results of this paper
  provide useful tools for analyzing and approximating infinite-dimensional
  affine stochastic covariance models that were recently introduced in
  mathematical finance.   
\end{abstract}

\maketitle{}

\section{Introduction}\label{sec:kar22-introduction}
\label{eq:kar22-exponential-affine-form}
In this article, we introduce and study finite-dimensional approximation of
affine processes with values in the cone of positive self-adjoint
Hilbert-Schmidt operators. In line with the conventions in the literature~\cite{CFMT11} we call a stochastically continuous Markov process \emph{affine}, whenever its cumulant generating function, at any time-point, is
affine in the initial value of the process and can be explicitly determined up
to the solution of a pair of associated \emph{generalized Riccati equations}.\par{}

The appeal of the affine class lies in its good tractability, as the cumulant
generating functions have a quasi-explicit form, making it a popular choice for
models in finance see, e.g.,~\cite{DFS03, Kel08, cuchiero2011affine} and the
references therein. In the last two decades affine processes and their
applications have been extensively studied by many authors on various state
spaces see, e.g.,~\cite{CFMT11, DFS03, CMET16, DL06}, including the
\emph{canonical state space} $\MR_+^{n}\times\MR^{d}$ in~\cite{DFS03} and the
cone of positive and symmetric $d\times d$-matrices $\MS_{d}^{+}$
in~\cite{CFMT11}.\par{}
In recent years there has been a growing interest in infinite-dimensional
versions of affine processes and their applications as seen in works such
as~\cite{Gra16, STY20, CT20, CKK22a}, including affine diffusion on canonical
state spaces in Hilbert spaces in~\cite{STY20}, applied to
term-structure modeling in~\cite{Yu17}, affine Markovian lifts of stochastic Volterra
equations in~\cite{CT20}, relevant for rough volatility modeling, see,
e.g,~\cite{JEE19}, and affine pure-jump processes on positive Hilbert-Schmidt
operators in~\cite{CKK22a}, that can be used as instantaneous covariance
processes in infinite-dimensional stochastic covariance models, see~\cite{BRS18,
  BS18, BLDP22, CKK22b, FK22, Kar22}. The class of affine processes on positive
Hilbert-Schmidt operators can be viewed as the natural infinite-dimensional
extension of the well-studied class of affine processes on $\MS_{d}^{+}$ and
both classes coincide for positive Hilbert-Schmidt operators defined on
the Euclidean space $\MR^{d}$.\par{}

In the present article, we go further: We demonstrate that all affine processes
on positive Hilbert-Schmidt operators essentially emerge as weak limits of
sequences of $\MS_{d}^{+}$-valued affine processes as the dimension $d$
increases towards infinity. By proving our main result
(Theorem~\ref{thm:kar22-main-convergence} below) we not only provide a tractable
approximation method for affine processes on positive Hilbert-Schmidt operators,
that enhance our understanding of the relationship between finite and
infinite-dimensional operator-valued affine processes, but we also uncover some
fundamental structural differences that arise in the transition from a finite to
an infinite dimensional setting.  

\subsection{Contributions and related literature}

The main contribution of this article is the introduction of a novel method for
approximating affine processes on positive Hilbert-Schmidt operators using
finite-rank operator-valued affine processes and Galerkin-type approximations of
the associated \emph{operator-valued generalized Riccati equations}. Our method is both tractable and constructive, providing a proof for
the existence of a broad class of affine processes on positive Hilbert-Schmidt
operators with c\`adl\`ag paths. While the existence of these processes was
previously established in~\cite{CKK22a}, the path regularity of the affine class
was left as an open problem. In the present article, we solve this by
establishing the approximating in the Skorohod space of all c\`adl\`ag path. In the following paragraphs, we provide a more detailed outline
of our contributions and related literature:

\vspace{-1mm}

\subsubsection*{Galerkin approximation of generalized Riccati equations}
In Proposition~\ref{prop:kar22-existence-Riccati} below, we construct Galerkin
type approximations of the solutions to the operator-valued generalized Riccati
equations~\eqref{eq:kar22-Riccati}, which are modulated by a
so-called \emph{admissible parameter set} (see Definition~\ref{def:kar22-admissible}
below), that uniquely identifies an affine process. The Galerkin approximations are defined on positive
finite-rank operators and we prove their convergence to the solutions of the
original generalized Riccati equations uniformly on compact time
intervals. Moreover, denoting the sequence of Galerkin approximations by
$(\phi_{d})_{d\in\MN}$ and $(\psi_{d})_{d\in\MN}$ and the original solutions by
$\phi$ and $\psi$ we present explicit bounds, in terms of the admissible
parameters and the initial value $u$ of $\psi$, for the following
approximation error:   
\begin{align*} \sup_{t\in[0,T]}\big(|\phi_{d}(t,\bP_{d}(u))-\phi(t,u)|+\norm{\psi_{d}(t,\bP_{d}(u))-\psi(t,u)}\big),  
\end{align*}
where $\norm{\cdot}$ denotes the Hilbert-Schmidt norm and $(\bP_{d})_{d\in\MN}$
are specific projections onto subspaces of self-adjoint operators of rank
$d\in\MN$ defined on some underlying Hilbert space. Galerkin approximation of Riccati equations on Hilbert-Schmidt operators have
been previously studied in the literature, as they are a fundamental tool in
stochastic control and filtering theory see, e.g.,~\cite{Ros91}. Our work
extends this literature by considering Galerkin approximation of
\emph{generalized} Riccati equations, that admit for non-linear components given
by integrals of vector-valued measures, and by quantifying the approximation
error of such equations through error bounds. 

\vspace{-1mm}

\subsubsection*{Finite-rank operator-valued affine processes}

For every finite rank $d$, we show the existence of an affine process on the cone of
positive operators with rank at most $d$, which can be associated with the Galerkin
approximations $\phi_{d}$ and $\psi_{d}$ from before. This intermediate step is
presented by Proposition~\ref{prop:kar22-embedding-affine-main} below and
yields, as a convenient byproduct, the existence of affine processes on positive
finite-rank operators, which are similar, but not equivalent, to their
matrix-valued counterparts in~\cite{CFMT11}. 

\vspace{-1mm}

\subsubsection*{Existence and weak convergence} Our main
result, Theorem~\ref{thm:kar22-main-convergence}, shows that the sequence
of \emph{finite-rank operator-valued affine processes} described above, denoted
by $(X^{d})_{d\in\MN}$, is \emph{tight} on $D(\MRplus,\cHplus)$, the Skorohod space
of all c\`adl\`ag paths from $\MRplus$ into the cone of positive Hilbert-Schmidt
operators $\cHplus$. Moreover, we prove that
the processes $(X^{d})_{d\in\MN}$ solve the \emph{martingale problem} for an
associated sequence of Kolmogorov-type operators. From this and tightness of the
sequence, we derive the weak convergence of $(X^{d})_{d\in\MN}$ to a unique
affine c\`adl\`ag Markov process $(X,\MP_{x})$ with values in $\cHplus$ and
present explicit convergence rates for the associated Laplace transforms. In
addition, we provide a convenient semimartingale representation for this affine
class, give concrete examples of operator-valued affine processes and their
approximations and show that in contrast to the matrix-valued case~\cite{May12},
infinite-rank operator-valued processes admit jumps of infinite
variation. Finite-dimensional approximations of affine diffusion in a Hilbert
space setting were already discussed in~\cite{Yu17}. However, the approximation
method was not used for proving the existence of affine diffusion and also no
explicit convergence rates for the Laplace transforms of the processes or their
associated generalized Riccati equations were established.

\vspace{-1mm}

\subsubsection*{Applications in affine stochastic covariance modeling}
The presented approximation method paves the way for improving the computational
efficiency and extending the range of applications of affine processes with values
in the cone of positive Hilbert-Schmidt operators with possibly infinite
rank. This is particularly relevant for infinite-dimensional affine stochastic
volatility models that were recently introduced in~\cite{CKK22b}. In a companion
article, we will examine finite-rank approximations of affine
stochastic covariance models using tools established in the present work.

\vspace{-1mm}

\subsection{Layout of the article}
In Section~\ref{sec:kar22-notat-prel}
we introduce our notation and recall some preliminaries on affine
processes. Section~\ref{sec:kar22-main-results} is devoted to the presentation of our main results. More specifically, in Section~\ref{sec:galerk-appr-gener} we
introduce Galerkin approximations of the generalized Riccati equations and
provide explicit convergence rates, in Section~\ref{sec:finite-rank-operator} we
state our results on the existence of finite-rank operator-valued affine
processes associated with the Galerkin approximations and in Section~\ref{sec:exist-weak-conv} we present a comprehensive version of our
main result on the existence and approximation of affine processes on
positive Hilbert-Schmidt operators. To illustrate our main findings we give in
Section~\ref{sec:kar22-regul-affine-proc} a concrete example of
an affine process on positive Hilbert-Schmidt operators of infinite-variation
and its finite-rank approximations. The proofs are contained in the
subsequent four chapters: In Section~\ref{sec:proof-prop-refpr} we prove existence and convergence of the Galerkin approximations, in Section~\ref{sec:kar22-affine-finite-rank} we construct associated sequences of
finite-rank operator-valued affine processes and in
Section~\ref{sec:kar22-tightness-weak-convergence} we show weak
convergence of the sequences of finite-rank processes.


\section{Notation and preliminaries}\label{sec:kar22-notat-prel}

\subsection{Notation}\label{sec:kar22-notation}
We set $\MN_{0}=\set{0,1,2,\ldots}$ and $\MN=\set{1,2,\ldots}$. For a complex
number $z=a+\I b\in\MC$ we denote its real part $a$ by $\Re(z)$ and its
imaginary part $b$ by $\Im(z)$. For a vector space $X$ and a subset
$U\subseteq X$ we denote the linear span of $U$ in $X$ by $\lin(U)$. For a
Banach space $X$ with norm $\norm{\cdot}_{X}$, we denote by $\cL(X)$ the space
of all \emph{bounded linear operators} on $X$, which becomes a Banach space
when equipped with the operator norm $\| T \|_{\cL(X)}=\sup_{\norm{x}_{X}\leq
  1}\norm{ T x}_{X}$ for $T\in\cL(X)$. Throughout this article we let $(H,
\langle \cdot,\cdot\rangle_H)$ be a real separable Hilbert space and we denote its norm by
$\norm{\cdot}_{H}$. Moreover, let $(V,(\cdot,\cdot)_{V})$ be a second separable
Hilbert space with norm $\norm{\cdot}_{V}$, then we denote the space of all
\emph{Hilbert-Schmidt operators} mapping from $H$ to $V$ by
$\cL_{2}(H,V)$. The space $\cL_{2}(H,V)$ is a Hilbert space itself when
equipped with the inner product $\langle\cdot,\cdot\rangle_{\cL_{2}(H,V)}$,
which for $A,B\in\cL_{2}(H,V)$ is defined by   
  $\langle A, B \rangle_{\cL_2(H,V)} \df \Tr(B^{*}A)=\sum_{n=1}^{\infty} \langle A e_n, B e_n \rangle_V,$
where $(e_n)_{n\in \MN}$ is an orthonormal basis of $H$ and the definition is
independent of the choice of the basis, see, e.g.~\cite[Section VI.6]{Wer00}.
Whenever $H=V$ we simply write $\cL_{2}(H)\df \cL_{2}(H,H)$, denote the inner
product by $\langle \cdot, \cdot \rangle$ and the norm by $\| \cdot
\|$. The subspace of all self-adjoint Hilbert-Schmidt operators on $H$ is
denoted by $\cH$ and we let $\cH^+$ stand for the set of all \emph{positive
  operators} in $\cH$, i.e.
$$\cH^{+} \df \{ A \in \cH \colon \langle Ah, h\rangle_H \geq 0 \text{ for all
} h\in H \}.$$ Note that $\cHplus$ is a closed \emph{convex cone} in $\cH$,
i.e. it is closed, $\cHplus+\cHplus\subseteq \cHplus$, $\lambda\cHplus \subseteq
\cHplus$ for all $\lambda\geq 0$ and $\cHplus\cap(-\cHplus) = \{ 0\}$. The cone $\cHplus$ induces a partial
ordering ``$\leq_{\cHplus}$'' on $\cH$ and we write $x\leq_{\cHplus} y$ whenever
$y-x\in \cHplus$. The cone $\cHplus$ is \emph{generating} for $\cH$,
i.e. $\cH=\cHplus -\cHplus$ and \emph{monotone},
i.e. $0\leq_{\cHplus}x\leq_{\cHplus} y$ implies $\norm{x}\leq\norm{y}$,
see~\cite{CKK22a}. We define $D(\MRplus,\cHplus)$ to be the space of all
c\`adl\`ag path from $\MRplus$ into $\cHplus$ equipped with the Skorohod
topology, see~\cite{Jak86}. For
any $A\in\cL(V)$ we denote by $A^{*}$ the \emph{adjoint} of $A$. For two elements
$x$ and $y$ in $V$ we define the operator $x\otimes y\in \cL(V)$ by $(x\otimes
y)h=\langle x,h \rangle_{V} y$ for every $h\in V$ and write $x^{\otimes 2}\df
x\otimes x$. If $V\subset H$ we say that $V$ is \emph{continuously embedded}
in $H$, if there exists a constant $C$ such that $\norm{u}_{H}\leq
C\norm{u}_{V}$ for all $u\in V$. If in addition the embedding operator of $V$
into $H$ is compact, then we say that $V$ is \emph{compactly embedded} into
$H$ and write $V\subset\!\subset H$.  

\subsection{Finite-rank projection schemes for Hilbert-Schmidt operators}\label{sec:kar22-proj-schem-hilb}
Let $(e_{i})_{i\in\MN}$ be an orthonormal basis of $H$ which can be chosen
arbitrarily, but is fixed throughout the section. For every
$d\in\MN$ we denote by $H_{d}$ the $d$-dimensional subspace of $H$ spanned by
the first $d$ basis vectors, i.e. $$H_{d}\df \lin\set{e_{i}\colon i=1,\ldots,d}.$$
We denote the orthogonal projection of $H$ onto $H_{d}$, with respect to the inner
product $\langle \cdot,\cdot\rangle_{H}$, by $\sP_{d}$. For every $i\in\MN$ we set
$\be_{i,i}\df e_{i}\otimes e_{i}$ and for all $i\neq j$ set $\be_{i,j}\df
\frac{1}{\sqrt{2}}(e_{i}\otimes e_{j}+e_{j}\otimes e_{i})$. Note that
$\norm{\be_{i,j}}=1$, $\be_{i,j}=\be_{j,i}$ for every $i,j\in\MN$ and it
can be seen that the family $\set{\be_{i,j}}_{i\leq j\in\MN}\df\set{\be_{i,j}\colon i,j\in\MN, i\leq j}$ is an orthonormal basis of $\cH$. For every $d\in\MN$, we let
$\cH_{d}$ stand for the finite-dimensional subspace of $\cH$ spanned by the
family $\set{\be_{i,j}\colon\, 1\leq i\leq j\leq d\,}$, i.e. $$\cH_{d}\df \lin\set{\be_{i,j}\colon\, 1\leq i\leq j\leq d\,}.$$
We denote the orthogonal projection of $\cH$ onto $\cH_{d}$, with respect to the inner
product $\langle\cdot,\cdot\rangle$, by $\bP_{d}$ and note that for every
$d\in\MN$ and $u\in\cH$ we have $\bP_{d}(u)=\sP_{d}u\sP_{d}$. Moreover, every
operator in $\cH_{d}$ is self-adjoint and of rank at most $d$. We write
$\bP_{d}^{\perp}(u)\df u-\bP_{d}(u)$ and note that
$\lim_{d\to\infty}\norm{\bP^{\perp}_{d}x}=0$ for all $x\in\cH$. In addition, it can be seen that
$\cH_{d}=\set{u\sP_{d}\colon \,u\in \cL(H_{d}),\, u=u^{*}}$ and for the
cone of all positive self-adjoint operators in $\cH_{d}$, denoted by $\cH^{+}_{d}$, we have
\begin{align*}
  \cH^{+}_{d}\df\set{u\sP_{d}\colon\; u\in\cL(H_{d}),\, u=u^{*},\, (u h,h)_{H}\geq 0\,\,\forall h\in H_{d}}.
\end{align*}
Note further that $\cHplus_{d}\subseteq\cHplus_{d+1}\subseteq\cHplus$ for all
$d\in\MN$. For more details on the subspace of finite-rank operators in the
ambient space of all Hilbert-Schmidt operators see~\cite{Ros91}. As
in~\cite{Goe84}, we call a sequence $(\cH_{d},\bP_{d})_{d\in\MN}$ defined as
above a \emph{projection scheme} in $\cH$ (with respect to the orthonormal
basis $\set{\be_{i,j}}_{i\leq j\in\MN}$).  

\subsection{Affine processes, admissible parameters and the generalized Riccati equations}\label{sec:kar22-admiss-param-gener}
As before, let $\left(H,\langle \cdot,\cdot\rangle_{H}\right)$ be a real
separable Hilbert space and $\cHplus$ the cone of positive self-adjoint  Hilbert-Schmidt
operators on $H$. Consider an $\cHplus$-valued time-homogeneous Markov process
$\left(X,(\MP_{x})_{x\in\cHplus}\right)$, where $\MP_{x}$ represents the
distribution of $X$ given that $X_{0}=x$. This process is called \textit{affine},
if its Laplace transform is of an exponential affine form in the initial value
$X_{0}=x\in\cHplus$, i.e. if 
\begin{align}\label{eq:kar22-affine}
  \EXspec{\MP_{x}}{\E^{-\langle \xi,u\rangle}}=\E^{-\phi(t,u)-\langle
  x,\psi(t,u)\rangle},\quad t\geq 0,\, u\in\cHplus\,,
\end{align}
for some functions $\phi\colon \MRplus\times\cHplus\to\MRplus$ and $\psi\colon
\MRplus\times \cHplus \to \cHplus$. Affine processes on $\cHplus$ were first
introduced and studied in~\cite{CKK22a} and can be uniquely identified by a parameter
tuple $(b,B,m,\mu)$, which for the readers convenience we recall
from~\cite[Definition 2.3]{CKK22a}: First, define the \emph{truncation function}
$\chi\colon\cH\to\cH$ by $\chi(\xi)\df\xi\one_{\set{\norm{\xi}\leq 1}}(\xi)$
for $\xi\in\cH$. Next, recall the following definition.
\begin{definition}\label{def:kar22-admissible}
  An \emph{admissible parameter set} $(b,B,m,\mu)$ consists of
  \begin{defenum}
  \item\label{item:kar22-m-2moment} a measure
    $m\colon\cB(\cHpluso)\to [0,\infty]$ such that
    \begin{enumerate}
    \item[(a)] $\int_{\cHpluso} \| \xi \|^2 \,\dm < \infty$ and
    \item[(b)] $\int_{\cHpluso}|\langle\chi(\xi),h\rangle|\,\dm<\infty$ for all $h\in\cH$  
   and there exists an element $I_{m}\in \cH$ such that $\langle
   I_{m},h\rangle=\int_{\cHpluso}\langle \chi(\xi),h\rangle\, m(\D\xi)$ for
   every $h\in\cH$;  
    \end{enumerate}   
  \item \label{item:kar22-drift} a vector $b\in\cH$ such that
    \begin{align*}
      \langle b, v\rangle - \int_{\cHpluso} \langle
      \chi(\xi), v\rangle \,m(\D\xi) \geq 0\, \quad\text{for all}\;v\in\cHplus;
    \end{align*}    
  \item \label{item:kar22-affine-kernel} a $\cH^{+}$-valued measure $\mu \colon
    \mathcal{B}(\cHpluso) \rightarrow \cH^+$ such that the kernel 
    $M(x,\D\xi)$, for every $x\in\cHplus$ defined on $\mathcal{B}(\cHpluso)$ by
    \begin{align}\label{eq:kar22-affine-kernel-M}
      M(x,\D\xi)\df \frac{\langle x, \mu(\D\xi)\rangle }{\norm{\xi}^{2}},
    \end{align}
    satisfies
    \begin{align}\label{eq:kar22-affine-kernel-quasi-mono}
      \int_{\cH^+\setminus \{0\}} \langle \chi(\xi), u\rangle\,M(x,\D\xi)< \infty,  
    \end{align}
    for all $u,x\in \cH^{+}$ such that $\langle u,x \rangle = 0$;
  \item\label{item:kar22-linear-operator} an operator $B\in
    \mathcal{L}(\mathcal{H})$ with adjoint $B^{*}$ satisfying
    \begin{align*}
      \left\langle B^{*}(u) , x \right\rangle 
      - 
      \int_{\cHpluso}
      \langle \chi(\xi),u\rangle 
      \,\frac{\langle \dmu, x \rangle}{\| \xi\|^2 }
      \geq 0,
    \end{align*}
    for all $x,u\in\cHplus$ such that $\langle u,x\rangle=0$. 
  \end{defenum}
\end{definition}

\begin{remark}
  We refer to \cite{BDS55} for a general introduction to vector-valued measure
  and integration theory.
  In Corollary~\ref{coro:kar22-projected-finited-variation} and
  Remark~\ref{rem:kar22-pettis-centering} below we give a more detailed
  explanation of the quite remarkable integrability conditions in part b)
  of~\cref{item:kar22-m-2moment}
  and~\eqref{eq:kar22-affine-kernel-quasi-mono}. In particular, we explain the
  differences in the infinite-dimensional setting compared to the
  matrix-valued case and we draw a connection to the \emph{Pettis integrability}
  of the truncation function $\chi$, see also~\cite[Remark
  2.4]{CKK22a}. 
\end{remark}

Next, given an admissible parameter set $(b,B,m,\mu)$ we define the two functions
$F\colon \cHplus\to \MR$ and $R\colon \cHplus \to \cH$ as follows:
\begin{align}
  F(u)&\df \langle b,u\rangle-\int_{\cHpluso}\big(\E^{-\langle
        \xi,u\rangle}-1+\langle \chi(\xi),u\rangle\big)\,\dm,\,\quad u\in\cHplus,\label{eq:kar22-F}\\ 
  R(u)&\df B^{*}(u)-\int_{\cHpluso}\big(\E^{-\langle \xi,u \rangle}-1+\langle \chi(\xi),
        u\rangle\big)\frac{\dmu}{\norm{\xi}^{2}},\quad u\in\cHplus.\label{eq:kar22-R}
\end{align}

We recall from~\cite[Section 2 and 3]{CKK22a} that $F$ and $R$ are
well-defined and locally Lipschitz continuous on $\cHplus$. The relevance of
$F$ and $R$ lies in the fact that they determine the evolution of the functions
$\phi$ and $\psi$, which in turn control the Laplace transform of $X$ by means
of formula~\eqref{eq:kar22-affine}. We recall the \emph{generalized Riccati equations}:  
\begin{subequations}\label{eq:kar22-Riccati}
  \begin{empheq}[left=\empheqlbrace]{align}
    \,\frac{\partial \phi(t,u)}{\partial t}&=F(\psi(t,u)),\text{ for }t>0,\quad 
    \phi(0,u)=0,\label{eq:kar22-Riccati-phi}
    \\
    \,\frac{\partial \psi(t,u)}{\partial t}&=R(\psi(t,u)),\text{ for }t>0,\quad
    \psi(0,u)=u.\label{eq:kar22-Riccati-psi}
  \end{empheq}
\end{subequations}
We already proved in~\cite[Proposition 3.7]{CKK22a} that for every $u\in\cHplus$
there exists a unique continuously differentiable \emph{global solution} $(\phi(\cdot,u),\psi(\cdot,u))$
to~\eqref{eq:kar22-Riccati-phi}-\eqref{eq:kar22-Riccati-psi}, i.e. a unique
solution such that $\phi(\cdot,u)\in C^{1}(\MRplus,\MRplus)$ and
$\psi(\cdot,u)\in C^{1}(\MRplus,\cHplus)$ and we also demonstrated in~\cite[Theorem
2.3]{CKK22a} that for every admissible parameter set $(b,B,m,\mu)$ there exists
an affine Markov process $X$ on $\cHplus$ such that~\eqref{eq:kar22-affine} holds.

\section{Main results}\label{sec:kar22-main-results}
Let $(H,\langle\cdot,\cdot\rangle_{H})$ and $(\cH,\langle\cdot,\cdot\rangle)$
be as in Section~\ref{sec:kar22-admiss-param-gener} and let
$(\cH_{d},\bP_{d})_{d\in\MN}$ be a projection scheme in $\cH$ with respect to
some orthonormal basis $\set{\be_{i,j}}_{i\leq j\in\MN}$ of $\cH$ as in
Section~\ref{sec:kar22-proj-schem-hilb}. In the following three Sections~\ref{sec:galerk-appr-gener},~\ref{sec:finite-rank-operator} and \ref{sec:exist-weak-conv} we give
comprehensive versions of our main results described in the
introduction. Moreover, in Section~\ref{sec:kar22-regul-affine-proc} we
present an example of an affine process on positive Hilbert-Schmidt operators
with state-dependent jumps of infinite-variation.      

\subsection{Galerkin approximation of the generalized Riccati equations}\label{sec:galerk-appr-gener}
Recall the two functions $F$ and $R$ from equations~\eqref{eq:kar22-F}
and~\eqref{eq:kar22-R}. Then for every $d\in\MN$, we define the functions $R_{d}\colon\cHplus_{d}\to \cH_{d}$ and
$F_{d}\colon\cHplus_{d}\to \MR$ as $R_{d}(u_{d})\df\bP_{d}(R(u_{d}))$ and
$F_{d}(u_{d})\df F(u_{d})$ for $u_{d}\in\cH_{d}^{+}$. In particular, for
every $u\in\cHplus$ we have $F_{d}(\bP_{d}(u))=F(\bP_{d}(u))$ and
$R_{d}(\bP_{d}(u))=\bP_{d}(R(\bP_{d}(u)))$.\medskip{}

In the following proposition, we introduce the Galerkin-type approximation of the
operator-valued generalized Riccati equations~\eqref{eq:kar22-Riccati-phi}-\eqref{eq:kar22-Riccati-psi},
with respect to the projectional scheme $(\cH_{d},\bP_{d})_{d\in\MN}$. In particular,
we assert the existence and well-posedness of the sequence of
Galerkin approximations $\big(\phi_{d}(\cdot,\bP_{d}(u)),\psi_{d}(\cdot,\bP_{d}(u))\big)_{d\in\MN}$,
with respect to the projectional scheme $(\cH_{d},\bP_{d})_{d\in\MN}$, and we establish explicit
convergence rates for its convergence to the unique solution
$(\phi(\cdot,u),\psi(\cdot,u))$, that hold pointwise in $u\in\cHplus$ and uniformly on
compact time intervals.

\begin{proposition}\label{prop:kar22-existence-Riccati}
  Let $(b,B,m,\mu)$ be an admissible parameter set as in
  Definition~\ref{def:kar22-admissible} and for every $u\in\cHplus$ denote by
  $(\phi(\cdot,u),\psi(\cdot,u))$ the unique solution of~\eqref{eq:kar22-Riccati}.  
  Then for every $d\in\MN$, $T>0$ and $u\in\cHplus$ there exists a unique solution
  $\big(\phi_{d}(\cdot,\bP_{d}(u)),\psi_{d}(\cdot,\bP_{d}(u))\big)$ of  
  \begin{subequations}
    \begin{empheq}[left=\empheqlbrace]{align}
      \,\frac{\partial \phi_{d}(t,\bP_{d}(u))}{\partial
        t}&=F_{d}\big(\psi_{d}(t,\bP_{d}(u))\big),\quad
      \phi_{d}\big(0,\bP_{d}(u)\big)=0,\label{eq:kar22-Riccati-Galerkin-phi} \\
      \,\frac{\partial \psi_{d}(t,\bP_{d}(u))}{\partial
        t}&=R_{d}\big(\psi_{d}(t,\bP_{d}(u))\big),\quad
      \psi_{d}\big(0,\bP_{d}(u)\big)=\bP_{d}(u),\label{eq:kar22-Riccati-Galerkin-psi}
    \end{empheq}
  \end{subequations}
  such that $\phi_{d}(\cdot,\bP_{d}(u))\in C^{1}(\MRplus,\MRplus)$ and
  $\psi_{d}(\cdot,\bP_{d}(u))\in C^{1}(\MRplus,\cHplus_{d})$. Moreover, there
  exists a constant $K\geq 0$, independent of $d\in\MN$, such that   
  \begin{align}\label{eq:kar22-convergence-Galerkin}
    \sup_{t\in[0,T]}\big(|\phi_{d}(t,\bP_{d}(u))-\phi(t,u)|+\norm{\psi_{d}(t,\bP_{d}(u))-\psi(t,u)}\big)\leq
    K C_{T,d},  
  \end{align}
  where $C_{T,d}$ is given by
  \begin{align}\label{eq:kar22-convergence-Galerkin-CTd}
    C_{T,d}=\sup_{t\in[0,T]}\big(\norm{\bP^{\perp}_{d}(\E^{tB^{*}}u)}+\norm{\bP^{\perp}_{d}(\E^{tB^{*}}\mu(\cHpluso))}\big).  
  \end{align}
  In particular, for every $u\in\cHplus$ the sequence
  $\big(\phi_{d}(t,\bP_{d}(u)),\psi_{d}(t,\bP_{d}(u))\big)_{d\in\MN}$
  converges to $(\phi(t,u),\psi(t,u))$ uniformly on compact sets in time.    
\end{proposition}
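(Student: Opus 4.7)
The plan is to prove the proposition in three stages: local-then-global well-posedness of the Galerkin system, an a priori Gronwall estimate for the difference of solutions, and extraction of the explicit rate $C_{T,d}$ via the variation-of-constants representation of $\psi$.

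\textbf{Well-posedness in $\cH_d^+$.} Since $F$ and $R$ are locally Lipschitz continuous on $\cHplus$ by the results recalled from~\cite{CKK22a} and $\bP_d\in\cL(\cH)$, the restrictions $F_d$ and $R_d$ are locally Lipschitz on $\cH_d^+$. As $\cH_d$ is finite-dimensional, Picard--Lindel\"of yields unique local $C^1$-solutions. Invariance of $\cH_d^+$ under the flow of $R_d$ I would derive by transferring the quasi-monotonicity in~\cref{item:kar22-linear-operator}: for $u_d,v \in \cH_d^+$ with $\langle u_d, v\rangle = 0$, self-adjointness of $\bP_d$ together with $\bP_d v = v$ gives $\langle R_d(u_d), v\rangle = \langle R(u_d), v\rangle \geq 0$, so $R_d$ is inward-pointing on $\partial \cH_d^+$, and a Nagumo-Brezis-type invariance criterion applies. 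Global existence then follows from the at-most-linear growth of $R$ on $\cHplus$, which rules out finite-time blow-up. Nonnegativity of $\phi_d$ is obtained by integration, using that $F\geq 0$ on $\cHplus$ under~\cref{item:kar22-drift}.

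\textbf{Gronwall estimate.} To compare $\psi$ and $\psi_d$, I decompose $\psi(t,u) = \bP_d\psi(t,u) + \bP_d^\perp\psi(t,u)$ and analyze $e_d(t):=\bP_d\psi(t,u) - \psi_d(t,\bP_d u)$. Applying $\bP_d$ to~\eqref{eq:kar22-Riccati-psi} and subtracting~\eqref{eq:kar22-Riccati-Galerkin-psi},
\begin{align*}
  e_d'(t) = R_d\bigl(\bP_d\psi(t,u)\bigr) - R_d\bigl(\psi_d(t,\bP_d u)\bigr) + \bP_d\bigl[R(\psi(t,u)) - R(\bP_d\psi(t,u))\bigr],
\end{align*}
with $e_d(0) = 0$. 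Boundedness of $t\mapsto\psi(t,u)$ on $[0,T]$ provides a uniform Lipschitz constant $L_T$ of $R$ on the relevant ball, and contractivity of the orthogonal projection $\bP_d$ yields $\norm{e_d'(t)} \leq L_T\bigl(\norm{e_d(t)} + \norm{\bP_d^\perp\psi(t,u)}\bigr)$. Gronwall's lemma bounds $\sup_{[0,T]}\norm{e_d(t)}$ by a constant multiple of $\sup_{[0,T]}\norm{\bP_d^\perp\psi(t,u)}$, and the triangle inequality transfers this to the full $\psi$-error.

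\textbf{Rate extraction.} To match the explicit form of $C_{T,d}$, I would use the mild representation $\psi(t,u)=\E^{tB^*}u - \int_0^t \E^{(t-s)B^*}g(\psi(s,u))\,\D s$, with $g(v):=\int_{\cHpluso}\bigl(\E^{-\langle\xi,v\rangle}-1+\langle\chi(\xi),v\rangle\bigr)\mu(\D\xi)/\norm{\xi}^2$, and apply $\bP_d^\perp$. The free part gives $\bP_d^\perp\E^{tB^*}u$, matching the first summand of $C_{T,d}$. For the convolution, I commute $\bP_d^\perp\E^{(t-s)B^*}$ with the vector-valued integral, invoke the uniform pointwise bound $|\E^{-\langle\xi,v\rangle}-1+\langle\chi(\xi),v\rangle|/\norm{\xi}^2 \leq c(\norm{v})$, and exploit the $\cHplus$-valuedness and monotonicity of $\mu$ together with $\bP_d^\perp x = x-\sP_d x\sP_d$ to dominate it by $c_T\,\norm{\bP_d^\perp\E^{(t-s)B^*}\mu(\cHpluso)}$, matching the second summand of $C_{T,d}$. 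For $\phi$, the bound follows by time-integration of $|F(\psi(t,u))-F(\psi_d(t,\bP_d u))|\leq L_T\norm{\psi(t,u)-\psi_d(t,\bP_d u)}$. Uniform convergence on compact time intervals follows from strong convergence $\bP_d^\perp\to 0$ on $\cH$ combined with the equicontinuity and hence compactness of $t\mapsto(\E^{tB^*}u, \E^{tB^*}\mu(\cHpluso))$ on $[0,T]$.

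\textbf{Main obstacle.} The delicate point is the rate extraction: $\bP_d^\perp$ is \emph{not} a positive operator and does not in general commute with $\E^{tB^*}$, so dominating the nonlinear remainder by $\norm{\bP_d^\perp\E^{(t-s)B^*}\mu(\cHpluso)}$---rather than by the larger total variation of the vector-valued image measure $\bP_d^\perp\E^{(t-s)B^*}\mu$---requires careful use of the monotonicity of the cone $\cHplus$ and the Pettis integrability of $\chi$ indicated after Definition~\ref{def:kar22-admissible}.
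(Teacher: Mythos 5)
Your proposal is correct and follows essentially the same route as the paper's proof: quasi-monotonicity of $R_{d}$ inherited from $R$ via $\langle R_{d}(v),x\rangle=\langle R(v),x\rangle\geq 0$ together with the $d$-independent local Lipschitz constants gives well-posedness, the error is split into $\bP_{d}\psi-\psi_{d}$ (Gronwall) and $\bP_{d}^{\perp}\psi$ (variation of constants, producing exactly the two summands of $C_{T,d}$), and the delicate domination of the convolution term by $\norm{\bP_{d}^{\perp}\E^{(t-s)B^{*}}\mu(\cHpluso)}$ that you flag as the main obstacle is handled in the paper by precisely the monotonicity-of-the-cone argument you sketch. The one imprecision is your appeal to ``at-most-linear growth of $R$'' for ruling out blow-up: when the state-dependent jumps have infinite variation (as in Section~\ref{sec:simple-affine-proc}) the compensator term only admits the quadratic bound $\tfrac{1}{2}\norm{\xi}^{2}\norm{u}^{2}$ near $\xi=0$, and the correct justification is the one-sided estimate $\langle R(v),v\rangle\leq\norm{B}_{\cL(\cH)}\norm{v}^{2}+\norm{\mu(\cHpluso)}\norm{v}$ (the small-jump part enters with a favourable sign), which is what produces the a priori bound $H_{M}$ that the paper imports from~\cite[Proposition 3.7]{CKK22a}.
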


\begin{definition}\label{def:kar22-solution-Riccati}
  Let $(\phi(\cdot,u),\psi(\cdot,u))$ denote the unique solution
  of~\eqref{eq:kar22-Riccati}. Then, for every $d\in\MN$, we call the function
  $\big(\phi_{d}(\cdot,\bP_{d}(u)),\psi_{d}(\cdot,\bP_{d}(u))\big)$ in
  Proposition~\ref{prop:kar22-existence-Riccati} the $d^{{\scriptstyle
      th}}$-\emph{Galerkin approximation}  of $(\phi(\cdot,u),\psi(\cdot,u))$.  
\end{definition}

\subsection{Finite-rank operator-valued affine processes}\label{sec:finite-rank-operator}

For every $d\in\MN$, we define the set $\cD^{d}\df\set{\E^{-\langle
    \cdot,u\rangle}\colon\, u\in\cHplus_{d}}\subseteq C(\cHplus,\MR)$ and the
operator $\cG^{d}\colon \cD\to C(\cHplus,\MR)$ as  
\begin{align}\label{eq:kar22-G-d-operator}
  \cG^{d}\E^{-\langle
      \,\cdot\,,u\rangle}(x)\df\big(-F_{d}(u)-\langle x
  ,R_{d}(u)\rangle\big)\E^{-\langle x, u\rangle},\quad x\in\cHplus,
\end{align}
where $F_{d}$ and $R_{d}$ are defined as in
Section~\ref{sec:galerk-appr-gener} above. The following proposition asserts,
that for all $d\in\MN$, the $d^{{\scriptstyle th}}$-Galerkin approximation
$\big(\phi_{d}(\cdot,\bP_{d}(u)),\psi_{d}(\cdot,\bP_{d}(u))\big)$ gives rise
to an affine Markov process $X^{d}$ with values in $\cHplus_{d}$, that solves
the martingale problem for $\cG^{d}$ with $X^{d}_{0}=\bP_{d}(x)$ on a suitable
stochastic basis.

\begin{proposition}\label{prop:kar22-embedding-affine-main}
  Let the assumptions of Proposition~\ref{prop:kar22-existence-Riccati}
  hold. Then for every $d\in\MN$ the following holds true:
  \begin{propenum}
  \item[i)]\label{item:kar22-embedding-affine-main-1} There exists a unique Markov
    process $(X^{d},(\MP_{x}^{d})_{x\in\cHplus})$, realized on the space
    $D(\MRplus,\cHplus)$ of all c\`adl\`ag paths and where $\MP_{x}^{d}$
    denotes the law of $X^{d}$ given $X_{0}^{d}=\bP_{d}(x)$, such that for all
    $x\in\cHplus$ we have $\MP_{x}^{d}(\set{X^{d}_{t}\in\cHplus_{d}\colon\, t\geq 0})=1$ and the
    following affine transform formula holds true:    
    \begin{align}\label{eq:kar22-affine-Galerkin}
      \hspace{10mm} \EXspec{\mathbb{P}^{d}_{x}}{\E^{-\langle X^{d}_{t},
      \bP_{d}(u)\rangle}}=\E^{-\phi_{d}(t,\bP_{d}(u))-\langle
      \bP_{d}(x),\psi_{d}(t,\bP_{d}(u))\rangle},\,\, t\geq 0,\,u\in\cHplus,  
    \end{align}
    for $\big(\phi_{d}(\cdot,\bP_{d}(u)),\psi_{d}(\cdot,\bP_{d}(u))\big)$ the unique solution of~\eqref{eq:kar22-Riccati-Galerkin-phi}-\eqref{eq:kar22-Riccati-Galerkin-psi}.
  \item[ii)]\label{item:kar22-embedding-affine-main-2} For every $x\in\cHplus$
    and every $u\in\cHplus$ the process 
    \begin{align}\label{eq:kar22-Gd-martingale-problem}
      \Big(\E^{-\langle X_{t}^{d},\bP_{d}(u)\rangle}-\E^{-\langle
      \bP_{d}(x),\bP_{d}(u)\rangle}-\int_{0}^{t}(\cG^{d}\E^{-\langle
      \,\cdot\,,\bP_{d}(u)\rangle})(X_{s}^{d})\,\D s\Big)_{t\geq 0},
    \end{align}
    is a real-valued martingale with respect to the stochastic basis
    $(\Omega,\bar{\cF}^{d},\bar{\MF}^{d},\!\MP_{x}^{d})$, where
    $\Omega=D(\MRplus,\cHplus)$ and $\bar{\MF}^{d}=(\bar{\cF}^{d}_{t})_{t\geq
      0}$ denotes the augmentation of
    the natural filtration $(\cF^{d}_{t})_{t\geq 0}$ of $X^{d}$ with respect to
    the measure $\MP_{x}^{d}$ from i).
  \end{propenum}
\end{proposition}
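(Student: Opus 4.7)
My plan is to reduce assertion~\cref{item:kar22-embedding-affine-main-1} to the matrix-valued affine process theory of~\cite{CFMT11} via the isometric isomorphism between $\cH_d^+$ and the cone $\MS_d^+$ of positive semidefinite symmetric $d\times d$ matrices induced by $(e_i)_{i=1}^d$. Concretely, I will construct a finite-dimensional admissible parameter set $(b_d,B_d,m_d,\mu_d)$ in the sense of~\cite{CFMT11} whose associated $F$- and $R$-functions agree with $F_d$ and $R_d$ from Section~\ref{sec:galerk-appr-gener}. The existence theorem of~\cite{CFMT11} then produces a unique c\`adl\`ag affine Markov process on $\cH_d^+$ whose Laplace transform is governed by the Galerkin Riccati system~\eqref{eq:kar22-Riccati-Galerkin-phi}-\eqref{eq:kar22-Riccati-Galerkin-psi}, and the inclusion $\cH_d^+\subset\cH^+$ realizes it on $D(\MRplus,\cHplus)$ with $\MP_x^d(\set{X_t^d\in\cH_d^+\colon t\ge 0})=1$.

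The key step is the construction of $(b_d,B_d,m_d,\mu_d)$. For $u\in\cH_d^+$, the identity $\langle\xi,u\rangle=\langle\bP_d\xi,u\rangle$ lets me rewrite
\begin{align*}
F_d(u)=\langle \bP_d b+c_d,u\rangle-\int_{\cHpluso}\bigl(\E^{-\langle\bP_d\xi,u\rangle}-1+\langle\bP_d\xi,u\rangle\one_{\{\|\bP_d\xi\|\le 1\}}\bigr)\dm,
\end{align*}
where the drift correction $c_d\df\int_{\cHpluso}\bP_d\xi\bigl(\one_{\{\|\bP_d\xi\|\le 1\}}-\one_{\{\|\xi\|\le 1\}}\bigr)\dm$ absorbs the mismatch between the truncation $\chi$ on $\cH$ and its matrix-valued analogue $\chi_d(\xi_d)\df\xi_d\one_{\{\|\xi_d\|\le 1\}}$ on $\cH_d$. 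The integral defining $c_d$ is a well-defined Bochner integral since $\set{\|\bP_d\xi\|\le 1<\|\xi\|}\subset\set{\|\xi\|>1}$, on which $m$ has finite mass by~\cref{item:kar22-m-2moment}. Taking $m_d$ to be the $\bP_d$-pushforward of $m$ and $b_d\df\bP_d b+c_d$ puts $F_d$ in the standard matrix-valued form. An analogous manipulation yields an operator $B_d\in\cL(\cH_d)$ and an $\cH_d^+$-valued measure $\mu_d$ representing $R_d$, and admissibility of $(b_d,B_d,m_d,\mu_d)$ in the sense of~\cite{CFMT11} is inherited from Definition~\ref{def:kar22-admissible}: drift positivity follows by running the $c_d$-calculation in reverse, and the boundary conditions~\cref{item:kar22-affine-kernel} and~\cref{item:kar22-linear-operator} transfer because any pair $u,x\in\cH_d^+$ with $\langle u,x\rangle=0$ is also orthogonal in $\cH^+$.

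Part~\cref{item:kar22-embedding-affine-main-2} then follows by the standard affine Dynkin argument: by part i) and the Markov property, the map $s\mapsto\EXspec{\MP_x^d}{\E^{-\langle X_{t+s}^d,\bP_d(u)\rangle}\mid \bar{\cF}^{d}_{t}}=\E^{-\phi_d(s,\bP_d(u))-\langle X_t^d,\psi_d(s,\bP_d(u))\rangle}$ is $C^1$ in $s\ge 0$, and differentiating at $s=0$ together with~\eqref{eq:kar22-Riccati-Galerkin-phi}-\eqref{eq:kar22-Riccati-Galerkin-psi} identifies $\cG^d\E^{-\langle\,\cdot\,,\bP_d(u)\rangle}(X_t^d)$ as the generator action on the exponential test function; integration and the tower property then give~\eqref{eq:kar22-Gd-martingale-problem}. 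The main obstacle is the bookkeeping in the middle paragraph: verifying that the truncation correction $c_d$, the pushforward measures, and the modified operator $B_d$ together reproduce $F_d,R_d$ exactly \emph{and} satisfy every admissibility condition of~\cite{CFMT11}, especially the quasi-monotonicity conditions on the boundary of $\cH_d^+$.
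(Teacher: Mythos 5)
Your proposal for part i) is essentially the paper's own argument: project the parameters onto $\cH_{d}$, absorb the mismatch between the truncation $\chi$ on $\cH$ and its finite-rank analogue into a corrected drift supported on $\set{\xi\colon \norm{\bP_{d}\xi}\leq 1<\norm{\xi}}$ (your $c_{d}$ is exactly the paper's $\int_{\bP_{d}(E_{d})}\xi\,m_{d}(\D\xi)$ in Definition~\ref{def:kar22-admissible-Galerkin}), verify matrix-valued admissibility in the sense of~\cite{CFMT11} via the isometry $i_{d}$, invoke the existence theorem there (plus~\cite{CT13} for a c\`adl\`ag version), and transport back. One piece of the ``bookkeeping'' you flag deserves emphasis because it is the only genuinely non-automatic admissibility condition: \cite{CFMT11} requires $\int_{\set{0<\norm{\xi}_{d}\leq 1}}\norm{\xi}_{d}\,\tilde{m}_{d}(\D\xi)<\infty$, and this does \emph{not} follow from the second-moment bound in part (a) of \cref{item:kar22-m-2moment} alone; the paper derives it in~\eqref{eq:kar22-const-jump-check-2} from the directional integrability in part (b), using $\norm{\bP_{d}\xi}\leq\sum_{i\leq j}^{d}|\langle\xi,\be_{i,j}\rangle|$, and the analogous finite-variation statement for the projected state-dependent measure is extracted from~\cite{May12}. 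Your claim that admissibility is ``inherited'' is true but only through this mechanism.

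For part ii) you take a genuinely different route. The paper (Proposition~\ref{prop:martingale-problem-d}) extends the projected parameters back to $\cH$, observes they form an admissible set, and cites the weak-generator and semimartingale results of~\cite{CKK22b} to get the martingale property for exponential, linear and quadratic test functions at once. You instead differentiate the conditional Laplace transform $s\mapsto\E^{-\phi_{d}(s,\bP_{d}(u))-\langle X_{t}^{d},\psi_{d}(s,\bP_{d}(u))\rangle}$ at $s=0$ and integrate (the classical Kolmogorov/Dynkin argument for affine processes). This is correct and more self-contained for the exponential functions appearing in~\eqref{eq:kar22-Gd-martingale-problem}; what it does not give, and what the paper's route supplies for free, is the extension of the martingale problem to $\langle\cdot,\bP_{d}(u)\rangle$ and $\langle\cdot,\bP_{d}(u)\rangle^{2}$, which is used later in the Carr\'e-du-champs computation for tightness — but that is outside the statement you were asked to prove.
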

The proof of Proposition~\ref{prop:kar22-embedding-affine-main} uses results
from the matrix-valued case in~\cite{CFMT11} and a subsequent transform into the
set of finite-rank operators. In addition to Proposition~\ref{prop:kar22-embedding-affine-main}, we show some
additional properties of the finite-rank operator-valued processes
$(X^{d})_{d\in\MN}$ in Section~\ref{sec:kar22-affine-finite-rank} below, namely:
We present a semimartingale representation of $(X_{t}^{d})_{t\geq 0}$ in
Proposition~\ref{prop:kar22-embedding-affine}, give a more detailed description
of the operator $\cG^{d}$ in Proposition~\ref{prop:martingale-problem-d} and
show that all the processes $(X^{d})_{d\in\MN}$ must be of finite-variation in Corollary~\ref{coro:kar22-projected-finited-variation}.

\subsection{Existence and weak convergence}\label{sec:exist-weak-conv}
In the following, we follow the setting in~\cite{Tem71}, namely: We let $(H,\langle
\cdot,\cdot\rangle_{H})$ be a separable Hilbert space and assume that
$(V,(\cdot,\cdot)_{V})$ is a second separable Hilbert space such that
$V\subseteq H$ and assume:

\vspace{2mm}
\begin{assumption}{}{C}\label{assump:kar22-compact-embedding}
  $(V,(\cdot,\cdot)_{V})$ is densely and compactly embedded in
  $(H,\langle\cdot,\cdot\rangle_{H})$. 
\end{assumption}
\vspace{2mm}
Moreover, let us denote by $V^{*}$ the Hilbert space dual of $V$ with respect
to the inner-product $\langle \cdot,\cdot\rangle_{H}$. Then identifying $H$
with its dual space $H^{*}$ gives the \emph{Gelfand triple}: $V\hookrightarrow
H \hookrightarrow V^{*}$. Moreover, we define the space $\cV$ as  
\begin{align}\label{eq:kar22-intersection-HS-space}
  \cV\df \cL_{2}(V^{*},H)\cap \cL_{2}(H,V), 
\end{align}
equip $\cV$ with the inner-product $\langle
\cdot,\cdot\rangle_{\cV}\df\langle\cdot,\cdot
\rangle_{\cL_{2}(V^{*},H)}+\langle\cdot, \cdot\rangle_{\cL_{2}(H,V)}$ and
denote the induced norm by $\norm{\cdot}_{\cV}$. Note that $\cV\subseteq \cL_{2}(H)$
and $(\cV,\langle\cdot,\cdot\rangle_{\cV})$ is a Hilbert space which is itself
densely and compactly embedded in $(\cL_{2}(H),\langle\cdot,\cdot\rangle)$,
see~\cite{Tem71}. Lastly, we define the space $\cV_{0}\subseteq
\cV$ as the subspace of all self-adjoint operators (with respect to
$\langle\cdot,\cdot\rangle$), i.e. $\cV_{0}\df \cV\cap \cH$.\par{}

We then proceed with our main result on the existence and approximation of affine
processes on positive Hilbert-Schmidt operators. In addition, we assert the
existence of c\`adl\`ag versions and give a semimartingale description of this
affine class. 

\begin{theorem}\label{thm:kar22-main-convergence}
  Let $(b,B,m,\mu)$ be an admissible parameter set as in Definition~\ref{def:kar22-admissible} and let
  Assumption~\ref{assump:kar22-compact-embedding} be satisfied. Then the
  following holds true: 
  \begin{theoremenum}
  \item\label{item:kar22-main-convergence-1} There exists a unique affine process
    $(X,(\MP_{x})_{x\in\cHplus})$ on $\cHplus$, with paths in
    $D(\MRplus,\cHplus)$ and where $\MP_{x}$ denotes the law of $X$ given
    $X_{0}=x$, such that for every $x\in\cHplus$ we have  
    \begin{align}\label{eq:kar22-affine-Galerkin-theorem}
      \EXspec{\MP_{x}}{\E^{-\langle X_{t}, u\rangle}}=\E^{-\phi(t,u)-\langle
      x,\psi(t,u)\rangle},\quad t\geq 0,\, u\in\cHplus,  
    \end{align}
    for $(\phi(\cdot,u),\psi(\cdot,u))$ the unique solution
    of~\eqref{eq:kar22-Riccati-phi}-\eqref{eq:kar22-Riccati-psi}.
  \item\label{item:kar22-main-convergence-2} Moreover, let $(X^{d})_{d\in\MN}$
    be the sequence of finite-rank operator-valued affine processes in Proposition~\ref{prop:kar22-embedding-affine-main}. Then
    the sequence $(X^{d})_{d\in\MN}$ converges weakly to $X$ on $D(\MRplus,\cHplus)$
    equipped with the Skorohod topology, i.e. for all $f\in
    C(D(\MRplus,\cHplus),\MR)$  we have
    \begin{align*}
      \EXspec{\MP_{x}^{d}}{f(X^{d})}\to \EXspec{\MP_{x}}{f(X)},\quad\text{as
      }d\to\infty. 
    \end{align*}
    If, in addition, we have $\norm{\mu(\cHpluso)}_{\cV}<\infty$ and
    $B^{*}(\cV_{0})\subseteq \cV_{0}$, then for every $T>0$ and $u\in\cHplus$
    with $\norm{u}_{\cV}\leq 1$ there exists a constant $C_{T}>0$ such that
    for all $d\in\MN$: 
    \begin{align}\label{eq:kar22-main-convergence-rate}
      \hspace{10mm}\sup_{t\in [0,T]}
      \left\lvert\EXspec{\MP_{x}}{\E^{-\langle u,
      X_{t}\rangle}}-\EXspec{\MP^{d}_{x}}{\E^{-\langle u,
      X^{d}_{t}\rangle}}\right\rvert&\leq C_{T}\norm{\bP^{\perp}_{d}}_{\cL(\cV,\cH)}(1+\norm{x}).
    \end{align}
  \end{theoremenum}
\end{theorem}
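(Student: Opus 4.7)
The plan is to derive the existence of $X$ in~\ref{item:kar22-main-convergence-1} and the weak-convergence statement in~\ref{item:kar22-main-convergence-2} simultaneously, by a tightness-and-identification argument applied to the sequence $(X^{d})_{d\in\MN}$ from Proposition~\ref{prop:kar22-embedding-affine-main}. Once every weak-subsequential limit is shown to satisfy~\eqref{eq:kar22-affine-Galerkin-theorem} with the unique $(\phi,\psi)$ solving~\eqref{eq:kar22-Riccati}, uniqueness in law follows: \eqref{eq:kar22-affine-Galerkin-theorem} determines the one-dimensional marginals (exponentials of linear forms being separating on probability measures on $\cHplus$), and the semigroup identities $\phi(t+s,u)=\phi(t,u)+\phi(s,\psi(t,u))$, $\psi(t+s,u)=\psi(s,\psi(t,u))$---inherited as limits of the analogous identities for $(\phi_{d},\psi_{d})$---pin down the transition kernels. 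Consequently the whole sequence $(X^{d})$ converges weakly and the limit $X$ has c\`adl\`ag paths.

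The hard part will be tightness of $(X^{d})$ on $D(\MRplus,\cHplus)$. Since $\cHplus$ is not locally compact, I would invoke Jakubowski's criterion, which requires (a) a compact containment condition and (b) tightness in $D(\MRplus,\MR)$ of $(g(X^{d}_{\cdot}))$ for each $g$ in a separating family of bounded continuous functions on $\cHplus$. Assumption~\ref{assump:kar22-compact-embedding} is essential for~(a): by~\eqref{eq:kar22-intersection-HS-space} combined with $V\subset\!\subset H$ we have $\cV_{0}\subset\!\subset\cH$, so closed balls in $\cV_{0}\cap\cHplus$ are compact in $\cHplus$. A uniform-in-$d$ bound on $\EXspec{\MP_{x}^{d}}{\norm{X^{d}_{t}}_{\cV_{0}}}$ on $[0,T]$ is obtained by computing first moments from~\eqref{eq:kar22-affine-Galerkin}: differentiating in $u$ at $0$ yields a linear-in-$x$ expression for $\EXspec{\MP_{x}^{d}}{\langle X^{d}_{t},v\rangle}$ in terms of the admissible parameters, and summing over an orthonormal basis of $\cV_{0}$ gives the bound; compact containment then follows from Markov's inequality. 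For~(b) the natural choice is $\{x\mapsto\E^{-\langle x,u\rangle}:u\in\cHplus\}$, and tightness of $(\E^{-\langle X^{d}_{\cdot},u\rangle})_{d\in\MN}$ in $D(\MRplus,\MR)$ is verified by Aldous's criterion applied to the real-valued martingales~\eqref{eq:kar22-Gd-martingale-problem}, using uniform-in-$d$ bounds on $\cG^{d}\E^{-\langle\cdot,u\rangle}(X^{d}_{s})$ that follow from the locally Lipschitz structure of $F_{d},R_{d}$ together with the compact containment from~(a).

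With tightness in hand, I would pass to the limit in~\eqref{eq:kar22-affine-Galerkin} along any weakly convergent subsequence $X^{d_{k}}\Rightarrow X$. The right-hand side converges to $\E^{-\phi(t,u)-\langle x,\psi(t,u)\rangle}$ by Proposition~\ref{prop:kar22-existence-Riccati}. For the left-hand side, the identity $\langle X^{d}_{t},\bP_{d}(u)\rangle=\langle X^{d}_{t},u\rangle$ (valid because $X^{d}_{t}\in\cH_{d}$) combined with the boundedness and continuity of $x\mapsto\E^{-\langle x,u\rangle}$ on $\cH$ and the Portmanteau theorem yield $\EXspec{\MP_{x}^{d_{k}}}{\E^{-\langle X^{d_{k}}_{t},\bP_{d_{k}}(u)\rangle}}\to\EXspec{\MP_{x}}{\E^{-\langle X_{t},u\rangle}}$ at continuity points $t$ of $t\mapsto\MP_{x}\circ X_{t}^{-1}$; stochastic continuity of $X$, inherited from the continuity of $t\mapsto(\phi(t,u),\psi(t,u))$, then extends~\eqref{eq:kar22-affine-Galerkin-theorem} to all $t\geq 0$.

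For the rate~\eqref{eq:kar22-main-convergence-rate}, I would apply $|\E^{-a}-\E^{-b}|\leq|a-b|$ for $a,b\geq 0$ to~\eqref{eq:kar22-affine-Galerkin-theorem} and~\eqref{eq:kar22-affine-Galerkin}, together with $\langle X^{d}_{t},u\rangle=\langle X^{d}_{t},\bP_{d}(u)\rangle$, to obtain
\[
  \bigl|\EXspec{\MP_{x}}{\E^{-\langle u,X_{t}\rangle}}-\EXspec{\MP^{d}_{x}}{\E^{-\langle u,X^{d}_{t}\rangle}}\bigr|\leq|\phi(t,u)-\phi_{d}(t,\bP_{d}(u))|+\norm{\psi(t,u)-\psi_{d}(t,\bP_{d}(u))}\,\norm{x},
\]
which Proposition~\ref{prop:kar22-existence-Riccati} bounds by $KC_{T,d}(1+\norm{x})$. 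Under the additional assumptions, the restriction $B^{*}|_{\cV_{0}}$ is bounded by the closed graph theorem, so $(\E^{tB^{*}})_{t\geq 0}$ is a $C_{0}$-semigroup on $\cV_{0}$ with $\sup_{t\in[0,T]}\norm{\E^{tB^{*}}}_{\cL(\cV_{0})}=:C'_{T}<\infty$. Applying $\norm{\bP^{\perp}_{d}y}\leq\norm{\bP^{\perp}_{d}}_{\cL(\cV,\cH)}\norm{y}_{\cV}$ in both summands of~\eqref{eq:kar22-convergence-Galerkin-CTd} yields $C_{T,d}\leq C'_{T}\norm{\bP^{\perp}_{d}}_{\cL(\cV,\cH)}(\norm{u}_{\cV}+\norm{\mu(\cHpluso)}_{\cV})$, which combined with $\norm{u}_{\cV}\leq 1$ produces~\eqref{eq:kar22-main-convergence-rate}.
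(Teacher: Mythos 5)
Your overall architecture (tightness, identification of subsequential limits, uniqueness, and the elementary estimate $|\E^{-a}-\E^{-b}|\leq|a-b|$ for the rate) matches the paper's, your treatment of~\eqref{eq:kar22-main-convergence-rate} is essentially the paper's Corollary~\ref{cor:kar22-convergence-Galerkin-compact}, and your Portmanteau/stochastic-continuity passage to the limit in~\eqref{eq:kar22-affine-Galerkin} is a legitimate, arguably more direct route to~\eqref{eq:kar22-affine-Galerkin-theorem} than the paper's time-dependent martingale computation with $f_{u}(t,x)=\E^{-\phi(T-t,u)-\langle x,\psi(T-t,u)\rangle}$. The first genuine gap is in your compact containment argument: the proposed uniform-in-$d$ bound on $\EXspec{\MP^{d}_{x}}{\norm{X^{d}_{t}}_{\cV_{0}}}$ is false in general, because the admissible parameters and the initial value are only assumed to lie in $\cH$, not in $\cV_{0}$. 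Take $m=0$, $\mu=0$, $B=0$ and $b\in\cHplus\setminus\cV_{0}$: then $X^{d}_{t}=\bP_{d}(x)+t\,\bP_{d}(b)$ and $\norm{X^{d}_{t}}_{\cV_{0}}\to\infty$ as $d\to\infty$ for $t>0$. Differentiating the Laplace transform only yields the mean in the $\cH$-pairing, and summing squares over a $\cV_{0}$-orthonormal basis reproduces the $\cV_{0}$-norm of that mean, which need not be finite uniformly in $d$ (nor does it control $\EXspec{}{\norm{\cdot}_{\cV_{0}}}$). The paper avoids $\cV$-norm moments of $X^{d}$ entirely: it verifies the Aldous criterion of~\cite[Theorem 2.2.2]{JM86}, getting the fixed-time tightness from $\MP^{d}_{x}(X^{d}_{t}\in\cHplus_{d})=1$ and the embedding $\cHplus_{d}\subseteq\cV_{0}\cap\cHplus$, and the oscillation condition from the decomposition $X^{d}=\bP_{d}(x)+H^{d}+\bar{J}^{d}$ together with the uniform $\cH$-norm second-moment bounds of Lemma~\ref{lem:kar22-tightness-square-bound}.

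The second, more serious gap is in the uniqueness step. Establishing~\eqref{eq:kar22-affine-Galerkin-theorem} for every subsequential limit identifies only the one-dimensional marginals under $\MP_{x}$. Your claim that the semiflow identities for $(\phi,\psi)$ then ``pin down the transition kernels'' presupposes that the limit is a Markov process; a weak limit of Markov processes need not be Markov, so there are no transition kernels to pin down, and two subsequential limits could share all one-dimensional marginals while differing in their multi-time laws. This is precisely why the paper routes through the martingale problem: Proposition~\ref{prop:kar22-weak-convergence} shows every limit point solves the martingale problem for $\cG$, the affine formula then shows all such solutions have the same one-dimensional distributions, and \cite[Theorem 4.4.2 (a)]{EK86} upgrades this to uniqueness in law on $D(\MRplus,\cHplus)$ together with the Markov property of the limit. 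An alternative patch within your framework would be to compute all finite-dimensional Laplace functionals of $X^{d}$ iteratively from~\eqref{eq:kar22-affine-Galerkin} and the flow property of $(\phi_{d},\psi_{d})$ and pass to the limit, but this needs convergence of $\psi_{d}(t,\cdot)$ locally uniformly in the $u$-argument, which is more than the pointwise-in-$u$ statement of Proposition~\ref{prop:kar22-existence-Riccati} that you invoke; as written, the uniqueness argument is incomplete.
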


\begin{remark}\label{rem:kar22-main-convergence}
  \begin{enumerate}
  \item[i)] A more detailed version of the constant $C_{T}$
    showing up in~\eqref{eq:kar22-main-convergence-rate} can be found in Corollary~\ref{cor:kar22-convergence-Galerkin-compact} below.    
  \item[ii)] Analogous to~\cref{item:kar22-embedding-affine-main-2} in the
    finite-rank case, we will
    show in Proposition~\ref{prop:kar22-weak-convergence} that $X$ is the
    (unique) solution to the martingale problem for the operator $\cG$ given by $\cG\E^{-\langle \,\cdot\, u\rangle}=(-F(u)-\langle\,\cdot\,,
    R(u)\rangle)\E^{-\langle \cdot,u \rangle}$ on the set
    $\cD\df\set{\E^{-\langle\cdot,u\rangle}\colon u\in\cHplus}$. To keep this
    section reasonably concise we relegate this (and other) side results to the
    latter Sections~\ref{sec:kar22-affine-finite-rank}
    and~\ref{sec:kar22-tightness-weak-convergence}.  
  \end{enumerate}
\end{remark}

The following proposition asserts that the affine process $(X,\MP_{x})$
from~\cref{item:kar22-main-convergence-1} is a semimartingale and we specify its
semimartingale characteristics, see, e.g.,~\cite{CKK22b}:

\begin{proposition}\label{prop:kar22-main-convergence-3}
  For every $x\in\cHplus$ the
    process $X$ is a square-integrable semimartingale with respect to the
    stochastic basis $(\Omega,\bar{\cF},\bar{\MF},\MP_{x})$, 
    where $\bar{\MF}=(\bar{\cF}_{t})_{t\geq 0}$ denotes the augmentation of
    the natural filtration $(\cF_{t})_{t\geq 0}$ of $X$ with respect to the
    measure $\MP_{x}$. The semimartingale characteristics $(A,C,\nu)$ of $X$,
    with respect to $\chi$, are given by: 
    \begin{align}
      A_{t}&=\int_{0}^{t}\big(b+B(X_{s})\big)\D s,\quad t \geq 0,\label{eq:kar22-characteristic-A}\\
      C_{t}&=0,\quad t \geq 0,\label{eq:kar22-characteristic-C}\\
      \nu(\D t,\D\xi)&= \big(m(\D\xi)+M(X_{t},\D\xi)\big)\D t,\label{eq:kar22-characteristic-nu}
    \end{align}
    and for every $t\geq 0$ the following representation holds true
    \begin{align}
      X_{t}&= x + \int_0^t \big( b+ B(X_{s})+ \int_{\cHplus \cap \{\|\xi\|
             >1\}} \xi\,\nu(X_s, \D \xi))\big)\, \D s + \bar{J}_{t},
    \end{align}
    where $\nu(x,\D\xi)=m(\D\xi)+M(x,\D\xi)$ and $(\bar{J}_{t})_{t\geq 0}$ is a purely
    discontinuous square-integrable martingale of the $\nu(\D t,\D\xi)$-compensated jumps of $X$ .

\end{proposition}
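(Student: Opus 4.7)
The plan is to identify the semimartingale characteristics of $X$ directly from the martingale problem $X$ solves (cf.~Remark~\ref{rem:kar22-main-convergence}(ii)), rather than attempting an ad hoc decomposition from scratch. Combined with standard semimartingale theory, the martingale problem pins down $(A,C,\nu)$ uniquely, after which the final representation is a routine reshuffling of the canonical decomposition.

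First I would substitute the explicit forms of $F$ and $R$ from~\eqref{eq:kar22-F}--\eqref{eq:kar22-R} into $\cG$ and group the state-independent and state-dependent parts using $\nu(x,\D\xi)=m(\D\xi)+M(x,\D\xi)$; this yields
\[
\cG\E^{-\langle\cdot,u\rangle}(x) = \E^{-\langle x,u\rangle}\Big[-\langle b+B(x),u\rangle + \int_{\cHpluso}\big(\E^{-\langle\xi,u\rangle}-1+\langle\chi(\xi),u\rangle\big)\,\nu(x,\D\xi)\Big],
\]
which is precisely the action, on the test function $\E^{-\langle\cdot,u\rangle}$, of the extended generator of an It\^o semimartingale with characteristics $(A,0,\nu)$ as in~\eqref{eq:kar22-characteristic-A}--\eqref{eq:kar22-characteristic-nu}. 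The absence of any quadratic-in-$u$ term forces $C\equiv 0$. Since the family $\{\E^{-\langle\cdot,u\rangle}:u\in\cHplus\}$ is separating and Laplace transforms of (vector-valued) measures determine them uniquely, the martingale problem together with the standard characterization of semimartingale characteristics (Jacod--Shiryaev II.2.42) identifies $X$ as an It\^o semimartingale with the stated $(A,C,\nu)$.

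Alternatively, and perhaps more transparently in the present setting, I would pass to the limit in the semimartingale characteristics of the approximating processes $X^{d}$ from Proposition~\ref{prop:kar22-embedding-affine-main}: each $X^{d}$ inherits a semimartingale structure from the matrix-valued case~\cite{CFMT11} with explicit characteristics built from the projected parameters, and one can invoke a suitable version of Jacod--Shiryaev IX.2.4 together with the weak convergence from Theorem~\ref{thm:kar22-main-convergence}(ii) to transport the characteristics to the limit. Square-integrability of $X_{t}$ follows from~\cref{item:kar22-m-2moment}(a), the bound on $\mu$ and a Gronwall estimate on $\EXspec{\MP_{x}}{\|X_{t}\|^{2}}$ along the lines of~\cite{CKK22a}. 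Finally, the representation formula is obtained by splitting the canonical jump integral (relative to $\chi$) into a compensated piece plus an uncompensated large-jump piece, and then re-absorbing the large-jump compensator into the drift, which adds exactly $\int_{0}^{t}\int_{\|\xi\|>1}\xi\,\nu(X_{s},\D\xi)\,\D s$ to $A_{t}$ and leaves the full compensated jump martingale $\bar{J}_{t}$.

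The main obstacle is the infinite-dimensional vector-valued integrability of the truncation function $\chi$: one must verify that $b+B(X_{s})+\int_{\|\xi\|>1}\xi\,\nu(X_{s},\D\xi)$ is a.s. Bochner integrable into $\cH$ so that $A_{t}$ is well-defined, and that $\chi(\xi)$ is strongly integrable against $(\mu^{X}-\nu)$ so that $\bar{J}_{t}$ exists as a genuine $\cH$-valued square-integrable martingale. The Pettis-type admissibility conditions in~\cref{item:kar22-m-2moment}(b) and~\eqref{eq:kar22-affine-kernel-quasi-mono} are tailored precisely to make these objects meaningful, and the bulk of the technical work goes into carefully tracing this integrability through each term of the decomposition.
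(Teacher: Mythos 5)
Your plan is sound in outline, but it takes a genuinely different---and far longer---route than the paper, whose entire proof of Proposition~\ref{prop:kar22-main-convergence-3} is the single line that it ``follows immediately from \cite[Proposition 2.4]{CKK22b}'': the semimartingale representation of an affine process on $\cHplus$ with admissible parameters $(b,B,m,\mu)$ is established once and for all in the companion paper, and here it is simply invoked for the process $X$ constructed in Theorem~\ref{thm:kar22-main-convergence}. What you propose is to re-derive that result from the martingale problem for $\cG$ that $X$ solves (Proposition~\ref{prop:kar22-weak-convergence}), reading off $(A,C,\nu)$ from the action of $\cG$ on exponential test functions and then reshuffling the truncation; this mirrors how the paper treats the finite-rank processes $X^{d}$ in Proposition~\ref{prop:martingale-problem-d} (which itself leans on \cite[Propositions 2.4 and 2.5]{CKK22b}). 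Your route buys self-containedness at the price of redoing the companion paper's work, and your diagnosis of where the real difficulty sits---Bochner versus Pettis integrability of $\chi$ against $\nu(x,\D\xi)$, cf.\ Corollary~\ref{coro:kar22-projected-finited-variation} and Remark~\ref{rem:kar22-pettis-centering}---is exactly right.

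Two steps in your sketch would need genuine care if carried out. First, the martingale problem is posed only for the Laplace-type test functions $\E^{-\langle\cdot,u\rangle}$ with $u$ in the cone $\cHplus$, whereas the Jacod--Shiryaev characterization \cite[Theorem II.2.42]{JS03} you invoke is stated for Fourier exponentials over all of $\MR^{d}$ and for finite-dimensional processes; to identify $(A,C,\nu)$ you must either enlarge the domain of $\cG$ to the linear and quadratic functionals $\langle\cdot,u\rangle$ and $\langle\cdot,u\rangle^{2}$ (as is done for $\cG^{d}$ in Proposition~\ref{prop:martingale-problem-d}) or justify that the Laplace side suffices because $X$ and its jumps live in the cone, and in either case you need a Hilbert-space version of the characterization (e.g.\ via \cite{Met82}). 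Second, your alternative route through convergence of characteristics along the approximating sequence $X^{d}$ would require verifying nontrivial majoration and continuity hypotheses on the limiting characteristics that are not automatic in infinite dimensions; neither this paper nor the companion argues that way.
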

\begin{proof}
  Follows immediately from~\cite[Proposition 2.4]{CKK22b}.  
\end{proof}

\subsection{Examples}\label{sec:kar22-regul-affine-proc}
In this section we give two examples of affine processes on positive
Hilbert-Schmidt operators: The first one is simple, yet of infinite
variation. The second example is a generic, but very high parametric.

\subsubsection{A simple affine process on positive Hilbert-Schmidt operators
  with jumps of infinite variation}\label{sec:simple-affine-proc}
We set $b=0$ and $m(\D\xi)=0$, i.e. we assume that the constant drift and
constant jump coefficients vanish. Then, for every $n\in\MN$ define the measures $\mu_{n}=n^{-2}\delta_{n^{-1}}$, where we note that
\begin{align*}
  \mu_{n}(\set{\lambda\in (0,\infty)\colon\,\lambda \be_{n}\in
  A})=\frac{1}{n^{2}}\delta_{n^{-1} e_{n}\otimes e_{n}}(A),\quad A\in\cB(\cHpluso).
\end{align*}
Next, let $g\in\cHplus$ be arbitrary and define
\begin{align*}
  \mu(A)\df\sum_{n\in\MN}g\mu_{n}(\set{\lambda\in (0,\infty)\colon \lambda
  \be_{n}\in A}),\quad A\in\cB(\cHpluso).
\end{align*}

Then set $B(u)\df
\big(\sum_{n\in\MN}\big(\!\int_{0}^{1}\!\lambda^{-1}\mu_{n}(\D\lambda)\big)
\langle u,\be_{n}\rangle\big)\langle g,x\rangle$, which ensures
that~\cref{item:kar22-linear-operator} is satisfied and hence $(0,B,0,\mu)$ is
admissible. This, by Theorem~\ref{thm:kar22-main-convergence}, implies that there exists a unique
associated affine process $X$. Next, we show that with this choice of $\mu$, the measure $M(x,\D\xi)$
in~\eqref{eq:kar22-affine-kernel-M} is of infinite-variation. Indeed, we see that 
\begin{align*}
  \int_{0<\norm{\xi}\leq 1}\norm{\xi}M(x,\D\xi)&=  \int_{0<\norm{\xi}\leq
                                                 1}\frac{1}{\norm{\xi}}\sum_{n\in\MN}\langle
                                                 x, g\rangle
                                                 \delta_{n^{-1}e_{n}\otimes e_{n}}(\D\xi)\\
                                               &=\sum_{n\in\MN}\frac{1}{\norm{n^{-1}e_{n}\otimes e_{n}}}\frac{1}{n^{2}}\langle
                                                 x, g\rangle \\
                                               &=\langle x, g\rangle
                                                 \sum_{n\in\MN}\frac{1}{n}=\infty. 
\end{align*}
This then implies that also the measure $\nu$ in~\eqref{eq:kar22-characteristic-nu}
is of infinite-variation, which in turn means that the associated affine process
$X$ has infinite-variation. However, note that for every $u\in\cH$, we have
\begin{align*}
  \int_{0<\norm{\xi}\leq 1}\langle \xi, u\rangle
  M(x,\D\xi)&=\sum_{n\in\MN}\langle n^{-1}e_{n}\otimes e_{n},
              u\rangle\frac{1}{\norm{n^{-1}e_{n}\otimes
              e_{n}}^{2}}\frac{1}{n^{2}}\langle x, g\rangle \\
            &=\langle x, g\rangle \sum_{n\in\MN}\frac{1}{n}u_{n,n} <\infty,
\end{align*}
with $u_{n,n}=\langle e_{n}\otimes e_{n}, u\rangle$ and $(n^{-1})_{n\in\MN}\in\ell^{2}$
as well as $(u_{n,n})_{n\in\MN}\in \ell^{2}$, hence
$(n^{-1}u_{n,n})_{n\in\MN}\in\ell^{1}$. To see that $(u_{n,n})_{n\in\MN}\in
\ell^{2}$, note that $u\in \cH$ and hence 
\begin{align*}
  \norm{(u_{n,n})_{n\in\MN}}_{\ell^{2}}=\big(\sum_{n\in\MN}u_{n,n}^{2}
  \big)^{1/2}\leq \big(\sum_{n,m\in\MN}u_{n,m}^{2}\big)^{1/2}=\norm{u}<\infty.  
\end{align*}

This demonstrates that although $X$ has infinite-variation, it is necessarily of
finite-variation in every fixed direction $u\in\cH$. A phenomenon that is not
possible for affine processes on finite-dimensional state spaces.

\subsubsection{A generic affine pure-jump process on
  \texorpdfstring{$\cHplus$}{the cone of positive self-adjoint Hilbert-Schmidt operators}}
In this section we present an example of an affine process on positive
Hilbert-Schmidt operators with c\`adl\`ag paths and jumps of
infinite-variation. Moreover, we describe its finite-rank approximations
through a semimartingale representation. As before we let $\set{\be_{i,j}}_{i\leq j\in
  \MN}$ denote an orthonormal basis of $\cH$ and let $(\cH_{d},\bP_{d})_{d\in\MN}$ be the
associated finite-rank projectional scheme. We specify the parameter set $(b,B,m,\mu)$ as follows:  
\begin{enumerate}
\item[i)] We set $m(\D\xi)=\norm{\xi}^{-2}\eta(\D\xi)$ for $\eta\colon
  \cB(\cHplus\setminus\set{0})\to [0,\infty]$ given by
  \begin{align}\label{eq:kar22-example-m}
    \eta(A)=\sum_{n\in\MN}\eta_{n}(\set{\lambda\in (0,\infty)\colon \lambda
    \be_{n}\in A}),\quad A\in\cB(\cHpluso),
  \end{align}
  where $(\eta_{n})_{n\in\MN}$ is a sequence of finite measures on
  $\cB((0,\infty))$ such that for all $u\in\cH$ we have 
  \begin{align}\label{eq:kar22-example-jump-cond-m}
    \sum_{n\in\MN}\big(\int_{0}^{1}\lambda^{-1}
    \eta_{n}(\D\lambda)\big)\langle u, \be_{n}\rangle <\infty
    \quad\text{and}\quad \sum_{n\in\MN}\eta_{n}((0,\infty))<\infty.     
  \end{align}
\item[ii)] We let $\tilde{b}\in\cHplus$ be arbitrary and let $I_{m}\in\cH$ be
  such that for all $u\in\cH$ we have
  \begin{align*}
    \langle
    I_{m},u\rangle=\sum_{n\in\MN}\big(\int_{0}^{1}\lambda^{-1}\eta_{n}(\D\lambda)\big)\langle
    u, \be_{n}\rangle,  
  \end{align*}
  and define $b\df \tilde{b}+I_{m}$.
\item[iii)] We let $(g_{n})_{n\in\MN}\subseteq\cHplus$ and define
  $\mu(\D\xi)\colon \cB(\cHpluso)\to \cHplus$ by    
  \begin{align}\label{eq:kar22-example-mu}
    \mu(A)=\sum_{n\in\MN}g_{n}\mu_{n}(\set{\lambda\in (0,\infty)\colon \lambda \be_{n}\in A}), 
  \end{align}
  where $(\mu_{n})_{n\in\MN}$ is a sequence of finite-measures on
  $\cB((0,\infty))$ such that for all $x\in\cHplus$ and $u\in\cH$ we have 
  \begin{align}\label{eq:kar22-example-jump-cond-mu}
   \quad\qquad\sum_{n\in\MN}\big(\int_{0}^{1}\lambda^{-1}\mu_{n}(\D\lambda)\big)\langle
    g_{n},x\rangle \langle u,\be_{n}\rangle<\infty \quad\text{and}\quad
    \sum_{n\in\MN}g_{n}\mu_{n}((0,\infty))\in \cHplus.   
  \end{align}
  Moreover, for every $x\in\cHplus$ we set $M(x,\D\xi)\df \norm{\xi}^{-2}\langle x,\mu(\D\xi)\rangle$. 
\item[iv)] Finally, let $C$ be a bounded linear operator on $H$ and let
  $\Gamma\in\cL(\cH)$ be such that for all $u,x\in\cH$ we have 
  \begin{align*}
    \langle \Gamma(x),
    u\rangle=\sum_{n\in\MN}\big(\int_{0}^{1}\lambda^{-1}\mu_{n}(\D\lambda)\big)\langle
    g_{n},x\rangle \langle u,\be_{n}\rangle     
  \end{align*}
  Then we define $B\in\cL(\cH)$ by $B(u)\df Cu+uC^{*}+\Gamma(u)$.  
\end{enumerate}

It can be seen that the parameter set $(b,B,m,\mu)$ is correctly set up to
satisfy the conditions of Definition~\ref{def:kar22-admissible}. Moreover, for every $d\in\MN$ and $x\in\cHplus_{d}$ we set 
\begin{enumerate}
\item[i)] $m_{d}(A)\df\sum\limits_{n=1}^{d}m_{n}(\set{\lambda\in (0,\infty)\colon \lambda
    \be_{n}\in A})$ for $A\in \cB(\cHplus_{d}\setminus\set{0})$;
\item[ii)]$c_{d}\df \tilde{b}_{d}+\sum\limits_{n=1}^{d}\big(\int_{0}^{1}\lambda^{-1}
  \eta_{n}(\D\lambda)\big)\be_{n}$ for $\tilde{b}_{d}=\bP_{d}(\tilde{b})$;
\item[iii)] $\mu_{d}(A)\df\sum\limits_{n=1}^{d}\bP_{d}(g_{n})\mu_{n}(\set{\lambda\in
    (0,\infty)\colon \lambda \be_{n}\in A})$ for $A\in \cB(\cHplus_{d}\setminus\set{0})$;
\item[iv)] $D_{d}(x)\df C_{d}x+xC^{*}_{d}+\sum_{n=1}^{d}\big(\!\int_{0}^{1}\!\lambda^{-1}\mu_{n}(\D\lambda)\big)\langle
    g_{n},x\rangle \langle u,\be_{n}\rangle$ for
  $C\df \sP_{d}C$. 
\end{enumerate}
It follows from Definition~\ref{def:kar22-admissible-Galerkin} and
Proposition~\ref{prop:kar22-embedding-affine} below, that there exists an
affine process $X^{d}=(X_{t}^{d})_{t\geq 0}$ with values in $\cHplus_{d}$ admitting
the following representation:    
\begin{align}\label{eq:kar22-example-semimartingale-representation-1}
  X^{d}_{t}&= X^{d}_0 +
             \int_0^t\Big(\tilde{b}_{d}+C_{d}X^{d}_{s}+X^{d}_{s}C_{d}^{*}\Big)\,\D
             s+\int_{0}^{t}\int_{\cHplus_{d}}\xi \,\mu^{X^{d}}(\D
             t,\D\xi),\quad t \geq 0, 
\end{align}
where $\mu^{X^{d}}(\D t,\D\xi)$ denotes the random measure associated with the
jumps of $X^{d}$ with compensator $\nu^{X^{d}}(\D
t,\D\xi)=(m_{d}(\D\xi)+\norm{\xi}^{-2}\langle X_{t}^{d},\mu(\D\xi)\rangle)\D
t$. From this it can be inferred that the process $X^{d}$ must be the
affine process from~\cref{item:kar22-embedding-affine-main-2}. It thus follows
from Theorem~\ref{thm:kar22-main-convergence}, that whenever $H$ satisfies
Assumption~\ref{assump:kar22-compact-embedding}, there exists a unique affine
process $X=(X_{t})_{t\geq 0}$ on $\cHplus$ that can be represented as 
\begin{align}\label{eq:kar22-example-semimartingale-representation}
  X_{t}&= X_0 + \int_0^t \Big(\tilde{b}+I_{m}+CX_{s}+X_{s}C^{*}+\Gamma(X_{s})\Big)\D s\nonumber\\
       &\qquad+\int_{0}^{t}\Big(\int_{\cHplus \cap \{\|\xi\| >1\}} \xi
         \,(m(\D\xi)+M(X_s, \D \xi)\Big)\, \D s + \bar{J}_{t},\quad t\geq 0, 
\end{align}
where $M(x,\D\xi)$ is as in~\eqref{eq:kar22-affine-kernel-M} and
$(\bar{J}_{t})_{t\geq 0}$ is a purely-discontinuous square-integrable
martingale. Moreover, it follows that $X^{d}$ converges weakly to $X$ as
$d\to\infty$.   
Note that in~\eqref{eq:kar22-example-semimartingale-representation-1} the
$\cH_{d}$-projections of the two drift terms $I_{m}$ and $\Gamma$ are killed by the compensator of the jump-process
$\big(\int_{0}^{t}\int_{\cHplus_{d}}\xi\,\mu^{X^{d}}(\D t,\D\xi)\big)_{t\geq
  0}$, as for every $d\in\MN$ the jumps are of finite-variation. In the
limit case~\eqref{eq:kar22-example-semimartingale-representation}, however,
$I_{m}$ and $\Gamma$ must occur in the drift (and jump-part) again as the
driving jump-process possibly converges to a process of infinite-variation as we
saw in Example~\ref{sec:simple-affine-proc} before.

\section{Proof: Galerkin approximation of generalized Riccati equations}\label{sec:proof-prop-refpr}

This section is devoted to the proof of
Proposition~\ref{prop:kar22-existence-Riccati}. If, in addition,
Assumption~\ref{assump:kar22-compact-embedding} is satisfied, then
Corollary~\ref{cor:kar22-convergence-Galerkin-compact} below sharpens the
convergence rate~\eqref{eq:kar22-convergence-Galerkin-CTd} even further. We begin this
section with a short lemma on the local Lipschitz continuity of the functions
$F$, $R$, $(F_{d})_{d\in\MN}$ and $(R_{d})_{d\in\MN}$.

\begin{lemma}\label{lem:kar22-locally-uniform-convergence-Rd}
  Let $M>0$ and $d\in\MN$. Then for every $u,v\in\cHplus_{d}$ with $\norm{u}\vee\norm{v}\leq M$ we have
  \begin{align}
    |F_{d}(u)-F_{d}(v)|\leq \Big(\norm{b}+(M+1)\int_{\cHpluso}\norm{\xi}^{2}\,m(\D\xi)\Big)\norm{u-v},\label{eq:kar22-Lipschitz-Fd}\\
    \norm{R_{d}(u)-R_{d}(v)}\leq
    \Big(\norm{B}_{\cL(\cH)}+(M+1)\norm{\mu(\cHpluso)}\Big)\norm{u-v}.\label{eq:kar22-Lipschitz-Rd}  
  \end{align}
  Moreover, for every $u,v\in\cHplus$ with $\norm{u}\vee\norm{v}\leq M$ we can replace
  $F_{d}$ by $F$ and $R_{d}$ by $R$, respectively, and the
  inequalities~\eqref{eq:kar22-Lipschitz-Fd} and~\eqref{eq:kar22-Lipschitz-Rd}
  continue to hold with the same local Lipschitz constants.
\end{lemma}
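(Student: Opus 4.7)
The plan is to reduce both inequalities to a single uniform pointwise estimate for the integrand
$$g(u,\xi):=\E^{-\langle\xi,u\rangle}-1+\langle\chi(\xi),u\rangle$$
appearing in~\eqref{eq:kar22-F} and~\eqref{eq:kar22-R}. The linear contributions are routine: $|\langle b,u-v\rangle|\leq\norm{b}\norm{u-v}$ by Cauchy--Schwarz and $\norm{B^{*}(u-v)}\leq\norm{B}_{\cL(\cH)}\norm{u-v}$ by boundedness of $B$. All of the real work concentrates on the nonlinear integral terms.

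The key pointwise claim I would establish is that, whenever $\norm{u}\vee\norm{v}\leq M$ and $\xi\in\cHpluso$,
$$|g(u,\xi)-g(v,\xi)|\leq (M+1)\norm{\xi}^{2}\norm{u-v}.$$
To prove it I would split $\cHpluso$ at $\norm{\xi}=1$. On $\{\norm{\xi}\leq 1\}$, where $\chi(\xi)=\xi$, the fundamental theorem of calculus applied to $s\mapsto\E^{-\langle\xi,v+s(u-v)\rangle}$ yields
$$g(u,\xi)-g(v,\xi)=\langle\xi,u-v\rangle\int_{0}^{1}\bigl(1-\E^{-\langle\xi,v+s(u-v)\rangle}\bigr)\D s;$$
since $\xi,u,v\in\cHplus$ all inner products are nonnegative, and the elementary bound $0\leq 1-\E^{-a}\leq a$ combined with $\langle\xi,v+s(u-v)\rangle\leq M\norm{\xi}$ gives the factor $M\norm{\xi}^{2}\norm{u-v}$. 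On $\{\norm{\xi}>1\}$ one has $\chi(\xi)=0$, and $|\E^{-\langle\xi,u\rangle}-\E^{-\langle\xi,v\rangle}|\leq|\langle\xi,u-v\rangle|\leq\norm{\xi}\norm{u-v}\leq\norm{\xi}^{2}\norm{u-v}$. Combining the two regimes produces the claim.

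From here \eqref{eq:kar22-Lipschitz-Fd} follows at once by integrating against $m(\D\xi)$ and invoking the second-moment condition $\int\norm{\xi}^{2}\,\dm<\infty$ from~\cref{item:kar22-m-2moment}. For \eqref{eq:kar22-Lipschitz-Rd} the extra care is that $\mu$ is $\cHplus$-valued: I would compute $\norm{R(u)-R(v)}=\sup_{\norm{x}\leq 1}|\langle R(u)-R(v),x\rangle|$, decompose $x=x^{+}-x^{-}$ with $x^{\pm}\in\cHplus$ (possible since $\cHplus$ is generating and the spectral positive/negative parts satisfy $\norm{x^{\pm}}\leq\norm{x}$), and on each $\cHplus$-component use that $M(x^{\pm},\D\xi)=\langle x^{\pm},\mu(\D\xi)\rangle/\norm{\xi}^{2}$ is a nonnegative scalar measure with $\int\norm{\xi}^{2}M(x^{\pm},\D\xi)=\langle x^{\pm},\mu(\cHpluso)\rangle\leq\norm{x^{\pm}}\norm{\mu(\cHpluso)}$. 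Combining with the pointwise bound then yields \eqref{eq:kar22-Lipschitz-Rd}.

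The $d$-dependent statements follow as immediate corollaries: $F_{d}$ is by construction the restriction of $F$ to $\cHplus_{d}\subseteq\cHplus$, so the estimate transfers verbatim, while $R_{d}=\bP_{d}\circ R|_{\cHplus_{d}}$ inherits the Lipschitz constant of $R$ because the orthogonal projection $\bP_{d}$ is a contraction on $\cH$. The main obstacle I anticipate is not conceptual but bookkeeping-style: carefully handling the $\cHplus$-valued integral appearing in the estimate for $R$, in particular the sign change of $g(u,\cdot)-g(v,\cdot)$ across the truncation threshold $\norm{\xi}=1$, without accruing spurious constants beyond what can be cleanly absorbed into the advertised factor $(M+1)\norm{\mu(\cHpluso)}$.
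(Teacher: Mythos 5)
Your proposal is correct and follows essentially the same route as the paper: the same pointwise estimate for the integrand split at the truncation threshold $\norm{\xi}=1$ (the paper simply quotes the bound $\lvert \E^{-\langle\xi,u\rangle}-\E^{-\langle\xi,v\rangle}+\langle\xi,u-v\rangle\rvert\le\norm{\xi}^{2}(\norm{u}\vee\norm{v})\norm{u-v}$ from an earlier reference, whereas you rederive it via the fundamental theorem of calculus, which is a nice self-contained touch), followed by integration against $m$ for $F$ and the observation that $F_{d}$ is a restriction and $\bP_{d}$ a contraction for the $d$-dependent claims. The one place you diverge is the vector-valued integral in $R$: the paper bounds it directly by the monotonicity of the cone ($0\leq_{\cHplus}w\leq_{\cHplus}w'$ implies $\norm{w}\le\norm{w'}$), dominating the integral in the cone order by $(M+1)\norm{u-v}\,\mu(\cHpluso)$ and then taking norms, while you dualize and split the test element $x=x^{+}-x^{-}$. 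Your route works, but as literally written it accrues the factor $\norm{x^{+}}+\norm{x^{-}}$, which can be as large as $\sqrt{2}\norm{x}$; to recover the advertised constant you should instead sum $\langle x^{+},\mu(\cHpluso)\rangle+\langle x^{-},\mu(\cHpluso)\rangle=\langle\lvert x\rvert,\mu(\cHpluso)\rangle\le\norm{x}\norm{\mu(\cHpluso)}$, using that $\norm{\lvert x\rvert}=\norm{x}$ in Hilbert--Schmidt norm. The paper's monotonicity argument avoids this bookkeeping entirely, which is precisely the ``obstacle'' you anticipated.
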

\begin{proof}
  We prove the inequalities for $F$ and $R$ first. Let $M>0$ and
  $u,v\in\cHplus$ such that $\norm{u}\vee\norm{v}\leq M$ and note that for all
  $\xi\in\cHplus$ we have 
  \begin{align*}
    \lvert \E^{-\langle\xi,u\rangle}-\E^{-\langle\xi,v\rangle}+\langle
    \xi,u-v\rangle\rvert\leq \norm{\xi}^{2}(\norm{u}\vee
    \norm{v})\norm{u-v},
  \end{align*}
  and $|\E^{-\langle\xi,u\rangle}-\E^{-\langle\xi,v\rangle}|\leq |\langle\xi,
  u-v\rangle|$, see also~\cite[Remark 3.4]{CKK22a}, we thus see that 
  \begin{align}\label{eq:kar22-Lipschitz-F}
    |F(u)-F(v)|&\leq |\langle b,
    u-v\rangle|+\int_{\cHpluso}|\E^{-\langle\xi,u\rangle}-\E^{-\langle\xi,u\rangle}+\langle
                 \chi(\xi),u-v\rangle|\,m(\D\xi)\nonumber\\
               &\leq \norm{b}\norm{u-v}+M\big(\int_{\cHplus\cap\set{0<\norm{\xi}\leq
                 1}}\norm{\xi}^{2}\,m(\D\xi)\big)\norm{u-v}\nonumber\\
    &\quad +\big(\int_{\cHplus\cap\set{\norm{\xi}>1}}\norm{\xi}\,m(\D\xi)\big)\norm{u-v}\nonumber\\
               &\leq
                 \Big(\norm{b}+(M+1)\int_{\cHpluso}\norm{\xi}^{2}\,m(\D\xi)\Big)\norm{u-v},
  \end{align}
  which proves inequality~\eqref{eq:kar22-Lipschitz-Fd} for $F_{d}$ replaced
  with $F$. For every $d\in\MN$ it is then obvious that also the function $F_{d}$ is
  Lipschitz continuous on the set $\cHplus\cap \set{u\in\cH\colon \norm{u}\leq M}$
  with the same Lipschitz constant as $F$ in~\eqref{eq:kar22-Lipschitz-F} admits. For
  the second inequality~\eqref{eq:kar22-Lipschitz-Rd}, again at first for $R$ replacing
  $R_{d}$, we note that by the monotonicity of the cone $\cHplus$ we have 
  \begin{align}\label{eq:kar22-Lipschitz-R}
    \norm{R(u)-R(v)}&\leq
                      \norm{B^{*}}_{\cL(\cH)}\norm{u-v}+\Big\|\int_{\cHplus\cap\set{\norm{\xi}>1}}\big\lvert\E^{-\langle\xi,u\rangle}-\E^{-\langle\xi,u\rangle}\big\rvert\,\frac{\mu(\D\xi)}{\norm{\xi}^{2}}\Big\|\nonumber\\
                    &\quad+\Big\|\int_{\cHplus\cap\set{0<\norm{\xi}\leq
                      1}}\big\lvert\E^{-\langle\xi,u\rangle}-\E^{-\langle\xi,u\rangle}+\langle\xi,u-v\rangle\big\rvert\,\frac{\mu(\D\xi)}{\norm{\xi}^{2}}\Big\|\nonumber\\ 
                    &\leq
                      \left(\norm{B}_{\cL(\cH)}+(M+1)\norm{\mu(\cHpluso)}\right)\norm{u-v}. 
  \end{align}
  We see that inequality~\eqref{eq:kar22-Lipschitz-R} also
  holds for every $(R_{d})_{d\in\MN}$ with the same local Lipschitz constant
  on $\set{u\in\cHplus\colon \norm{u}\leq M}$ given by~\eqref{eq:kar22-Lipschitz-R}.
  In particular, the Lipschitz constants of $R_{d}$ and $F_{d}$ on
  $\cHplus\cap \set{u\in\cHplus_{d}\colon \norm{u}\leq M}$ do not depend on
  $d\in\MN$.     
\end{proof}

With this lemma at hand we can now prove
Proposition~\ref{prop:kar22-existence-Riccati}.  

\begin{proof}[Proof of Proposition~\ref{prop:kar22-existence-Riccati}]
  Let $u\in\cHplus$, $T>0$ and $d\in\MN$. We begin with showing the existence
  and uniqueness of the solution
  to~\eqref{eq:kar22-Riccati-Galerkin-phi}-\eqref{eq:kar22-Riccati-Galerkin-psi}
  on the interval $[0,T]$. From~\eqref{eq:kar22-affine-kernel-quasi-mono} it
  follows that for every $x,v\in\cHplus_{d}$ such that $\langle x,v\rangle=0$
  we have
  \begin{align*}
    \langle R_{d}(v), x\rangle=\langle \bP_{d}(R(v)), x\rangle=\langle R(v), x\rangle\geq 0.   
  \end{align*}
  Thus then implies that $R_{d}$ is \emph{quasi-monotone increasing} with respect to
  the cone $\cHplus_{d}$, see also~\cite[Definition 3.1]{CKK22a}. This, the
  Lipschitz continuity of $R_{d}$ on the sets $\set{v\in\cHplus_{d}\colon
    \norm{v}\leq M}$, for every $M>0$, see
  Lemma~\ref{lem:kar22-locally-uniform-convergence-Rd}, implies the existence
  and uniqueness of a continuously differentiable function
  $\psi_{d}(\cdot,\bP_{d}(u))$ on $[0,T]$ that
  solves~\eqref{eq:kar22-Riccati-Galerkin-psi}, see also the proof of~\cite[Proposition 3.5]{CKK22a}. The existence and uniqueness of a
  continuously differentiable function $\phi_{d}(\cdot,\bP_{d}(u))$ on $[0,T]$
  solving~\eqref{eq:kar22-Riccati-Galerkin-phi} then follows immediately from
  the continuity of $F_{d}$ and mere integration of both sides
  of~\eqref{eq:kar22-Riccati-Galerkin-phi}.\\
  Next, we prove the inequality~\eqref{eq:kar22-convergence-Galerkin}. For
  this let us fix $M>0$ and note that by
  Lemma~\ref{lem:kar22-locally-uniform-convergence-Rd} we find a Lipschitz
  constant of $R_{d}$ and $R$ on $\set{v\in\cHplus_{d}\colon \norm{v}\leq M}$
  which does not depend on $d\in\MN$. It thus follows from~\cite[Equation
  3.11]{CKK22a} (see also the proof of~\cite[Proposition 3.7]{CKK22a}) that
  for all $t\in [0,T]$ and all $u\in\set{u\in\cHplus\colon \norm{u}\leq M}$ we have
  \begin{align*}
    \norm{\psi(t,u)}\vee \norm{\psi_{d}(t,\bP_{d}(u))}\leq M
    \exp\big((\norm{B}_{\cL(\cH)}+2\norm{\mu(\cHpluso)})T\big).    
  \end{align*}
  Let us set $H_{M}\df M
  \exp\big((\norm{B}_{\cL(\cH)}+2\norm{\mu(\cHpluso)})T\big)$ and note that
  for every $t\in [0,T]$ and $u\in\cHplus$ we have 
  \begin{align*}
    \norm{\psi(t,u)-\psi_{d}(t,\bP_{d}(u))}&\leq
                                             \norm{\psi(t,u)-\bP_{d}(\psi(t,u))}+\norm{\bP_{d}(\psi(t,u))-\psi_{d}(t,\bP_{d}(u))},                           
  \end{align*}
  where by~\eqref{eq:kar22-Lipschitz-Rd} for all $u\in\cHplus$ with
  $\norm{u}\leq M$ the second term satisfies  
  \begin{align}\label{eq:kar22-Riccati-convergence-1}
    \norm{\bP_{d}(\psi(t,u))-\psi_{d}(t,\bP_{d}(u))}&\leq
                                                      \int_{0}^{t}\norm{\bP_{d}R(\psi(s,u))-R_{d}(\psi_{d}(s,\bP_{d}(u)))}\,\D
                                                     s\nonumber \\
                                                    &\leq L^{(1)}_{M}\int_{0}^{t}\norm{\psi(s,u)-\psi_{d}(s,\bP_{d}(u))}\,\D
                                                      s,   
  \end{align}
  with $L^{(1)}_{M}\df\norm{B}_{\cL(\cH)}+(H_{M}+1)\norm{\mu(\cHpluso)}$. Moreover, for all
  $u,\xi\in\cH$ we set $K_{u}(\xi)\df\E^{-\langle \xi,u\rangle}-1+\langle\chi(\xi),u\rangle$ and recall that by the
  variation-of-constant formula the solution $\psi(\cdot,u)$ satisfies 
  \begin{align*}
    \psi(t,u)=\E^{t
    B^{*}}u+\int_{0}^{t}\E^{(t-s)B^{*}}\Big(\int_{\cHpluso}K_{\psi(s,u)}(\xi)\frac{\mu(\D\xi)}{\norm{\xi}^{2}}\Big)\D
    s,\quad
    t\in [0,T].
  \end{align*}
  From this and writing $\norm{\psi(t,u)-\bP_{d}(\psi(t,u))}=\norm{\bP^{\perp}_{d}(\psi(t,u))}$ we obtain
  \begin{align}
    \norm{\bP^{\perp}_{d}(\psi(t,u))}&\leq
                                       \norm{\bP_{d}^{\perp}(\E^{tB^{*}}u)}+\int_{0}^{t}\norm{\bP^{\perp}_{d}\E^{(t-s)B^{*}}\int_{\cHpluso}K_{\psi(s,u)}(\xi)\frac{\mu(\D\xi)}{\norm{\xi}^{2}}}\D
                                       s\nonumber\\ 
                                       &\leq
                                         \norm{\bP^{\perp}_{d}(\E^{tB^{*}}u)}+t
                                         H_{M}^{2}\sup_{s\in[0,t]}\norm{\bP^{\perp}\E^{sB^{*}}_{d}(\mu(\cHpluso))},\label{eq:kar22-Riccati-convergence-2}  
  \end{align}
  where in the last line of~\eqref{eq:kar22-Riccati-convergence-2} we used that
  \begin{align*}
    |\E^{-\langle \xi,\psi(s,u)\rangle}\!-\!1\!+\!\langle
  \chi(\xi),\psi(s,u)\rangle|\leq
  \frac{1}{2}\norm{\xi}^{2}\norm{\psi(s,u)}^{2}\one_{\set{\norm{\xi}\leq
    1}}\!+\!\norm{\xi}\norm{\psi(s,u)}\one_{\set{\norm{\xi}>1}} 
  \end{align*}
  $\sup_{s\in[0,t]}\norm{\psi(s,u)}\leq H_{M}$ for all $t\in [0,T]$ and the
  monotonicity of the integral, which implies that for every $s\in [0,t]$ we obtain 
  \begin{align*}
    \norm{\int_{\cHpluso}K_{\psi(s,u)}(\xi)\frac{\bP^{\perp}_{d}\E^{(t-s)B^{*}}\mu(\D\xi)}{\norm{\xi}^{2}}}
                                                                                                            &\leq
                                                                                                              H_{M}^{2}\norm{\bP^{\perp}_{d}\E^{(t-s)B^{*}}\mu(\cHpluso)}.
  \end{align*}
  Let us denote the right-hand side of~\eqref{eq:kar22-Riccati-convergence-2} by
  $K_{t}^{d}$. It follows from~\eqref{eq:kar22-Riccati-convergence-1}
  and~\eqref{eq:kar22-Riccati-convergence-2} that  
  \begin{align*}
   \norm{\psi(t,u)-\psi_{d}(t,\bP_{d}(u))} \leq
    K_{t}^{d}+L^{(1)}_{M}\int_{0}^{t}\norm{\psi(s,u)-\psi_{d}(s,\bP_{d}(u))}\,\D s, \quad
    t\in [0,T]. 
  \end{align*}
  This, the fact that $K_{t}^{d}$ is non-decreasing in $t$ and an application
  of Gronwall's inequality yields  
  \begin{align}\label{eq:kar22-Riccati-convergence-3}
    \norm{\psi(t,u)-\psi_{d}(t,\bP_{d}(u))}\leq K_{t}^{d}\exp(L^{(1)}_{M}
    t),\quad t\in [0,T],  
  \end{align}
  where we note that $\sup_{t\in [0,T]}K_{t,d}=K_{T}^{d}$. Hence taking the
  supremum over all $t\in [0,T]$ on both sides
  of~\eqref{eq:kar22-Riccati-convergence-3} yields 
  \begin{align*}
    \sup_{t\in [0,T]}\norm{\psi(t,u)-\psi_{d}(t,\bP_{d}(u))}\leq K_{T}^{d}\exp(L^{(1)}_{M} T).  
  \end{align*}
  Similarly, for the error term in~\eqref{eq:kar22-convergence-Galerkin}
  involving $\phi(\cdot,u)$, we note that $F_{d}(u)=F(u)$ for all
  $u\in\cHplus_{d}$ and hence by using~\eqref{eq:kar22-Lipschitz-Fd} we obtain
  \begin{align*}
    |\phi(t,u)-\phi_{d}(t,\bP_{d}(u))|&\leq
                                        \int_{0}^{t}|F(\psi(s,u))-F_{d}(\psi_{d}(s,\bP_{d}(u)))|\,\D
                                        s\\
                                      &\leq
                                        \big(\norm{b}+(H_{M}+1)\int_{\cHpluso}\norm{\xi}^{2}\,m(\D\xi)\big)\int_{0}^{t}K_{s,d}\E^{L^{(1)}_{M}
                                        s}\D s\\
                                      &\leq L_{M}^{(2)}
                                        t \sup_{s\in[0,t]}K_{s}^{d}\exp(L^{(1)}_{M}s), 
  \end{align*}
  with $L_{M}^{(2)}\df
  \norm{b}+(H_{M}+1)\int_{\cHpluso}\norm{\xi}^{2}\,m(\D\xi)$. Moreover, we
  conclude that the left-hand side in~\eqref{eq:kar22-convergence-Galerkin} is
  bounded by $\E^{L^{(1)}_{M}T}\big(1+L_{M}^{(2)}T\big)K^{d}_{T}$.
  We note that $L^{(1)}_{M}$ and $L_{M}^{(2)}$ are independent of $d\in\MN$ and
  thus setting $K \df \E^{L^{(1)}_{M}T}\big(1+L_{M}^{(2)}T\big)(1+TH_{M}^{2})$   
  yields~\eqref{eq:kar22-convergence-Galerkin} with $C_{T,d}$ given
  by~\eqref{eq:kar22-convergence-Galerkin-CTd}.\par{} Let us prove that $C_{T,d}$
  vanishes when $d$ tends to infinity. Indeed, note first that the map
  $t\mapsto \E^{tB^{*}}$ is continuous and thus maps compact sets to compact
  sets. In particular, for every $v\in\cHplus$ we see that the set $\set{\E^{tB^{*}}v\colon t\in[0,T]}$ is
  compact in $\cHplus$ and since for every $d\in\MN$ the operators
  $\bP^{\perp}_{d}$ converge uniformly on compact sets, we conclude that
  $\sup_{t\in[0,T]}\norm{\bP^{\perp}_{d}(\E^{tB^{*}}v)}\to 0$ as
  $d\to\infty$. Applying this to $v=u$ and $v=\mu(\cHpluso)$ accordingly, implies that the
  left-hand side in~\eqref{eq:kar22-convergence-Galerkin} converges to zero
  uniformly on compact sets in time as $d$ tends to infinity.
\end{proof}

We end this section with a corollary of Proposition~\ref{prop:kar22-existence-Riccati} providing more specific convergence rates under the additional
Assumption~\ref{assump:kar22-compact-embedding}. This convergence rate appears
again in~\eqref{eq:kar22-main-convergence-rate}. Let $V$, $H$ and $\cV$ be as in
Section~\ref{sec:exist-weak-conv}, then the following corollary holds true:

\begin{corollary}\label{cor:kar22-convergence-Galerkin-compact}
  Let the assumptions of Proposition~\ref{prop:kar22-existence-Riccati}
  hold and assume in addition that Assumption~\ref{assump:kar22-compact-embedding} is satisfied. If
  moreover $\norm{\mu(\cHpluso)}_{\cV}<\infty$ and $B^{*}(\cV_{0})\subseteq \cV_{0}$, then for all $T>0$ we have 
  \begin{align*}
    \sup_{t\in[0,T],\,\norm{u}_{\cV}\leq 1}\big(|\phi_{d}(t,\bP_{d}(u))\!-\!\phi(t,u)|\!+\!\norm{\psi_{d}(t,\bP_{d}(u))\!-\!\psi(t,u)}\big)\leq
    C_{T}\norm{\bP_{d}^{\perp}}_{\cL(\cV,\cH)},  
  \end{align*}
  with
  $C_{T}=\big(\E^{L_{C}^{(1)}}(1+L_{C}^{(2)}T)(1+TH_{C}^{2})\big)\big(\E^{T\norm{B}_{\cL(\cH)}}(1+\norm{\mu(\cHpluso)}_{\cV})\big)$
  for $H_{C}$, $L_{C}^{(1)}$ and $L_{C}^{(2)}$ being as in the proof of
  Proposition~\ref{prop:kar22-existence-Riccati} with $M=C$. 
\end{corollary}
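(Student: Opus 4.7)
The plan is to bootstrap from Proposition~\ref{prop:kar22-existence-Riccati}, whose conclusion already delivers the bound $K\,C_{T,d}$ with $C_{T,d}$ as in~\eqref{eq:kar22-convergence-Galerkin-CTd}. Two tasks remain: first, verify that the constant $K$ can be chosen independently of the particular $u$ when $u$ ranges over the unit ball of $\cV$; and second, factorize $\sup_{\norm{u}_{\cV}\le 1}C_{T,d}$ as $(\mbox{a constant depending only on }T)\cdot\norm{\bP_d^{\perp}}_{\cL(\cV,\cH)}$.

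For the first task, Assumption~\ref{assump:kar22-compact-embedding} entails in particular a continuous embedding $\cV\hookrightarrow\cH$, so there exists $C>0$ with $\norm{u}\le C\norm{u}_{\cV}$ for every $u\in\cV$. Consequently $\{u\in\cHplus\cap\cV:\norm{u}_{\cV}\le 1\}\subseteq\{u\in\cHplus:\norm{u}\le C\}$, and inspecting the proof of Proposition~\ref{prop:kar22-existence-Riccati} with $M=C$ shows that the quantities $H_C$, $L_C^{(1)}$, $L_C^{(2)}$ depend only on $T$, $C$, and the admissible parameters, not on $u$ or $d$. The left-hand side in the statement is therefore bounded by $\E^{L_C^{(1)}T}(1+L_C^{(2)}T)(1+TH_C^{2})\,C_{T,d}$, which accounts for the first factor of $C_T$.

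The heart of the argument is the factorization of $C_{T,d}$. Since $B\in\cL(\cH)$, the stability condition $B^{*}(\cV_{0})\subseteq\cV_{0}$ combined with the continuous embedding $\cV_{0}\hookrightarrow\cH$ lets the closed graph theorem apply to $B^{*}|_{\cV_{0}}:\cV_{0}\to\cV_{0}$, giving boundedness in the $\cV$-norm. Hence the semigroup $(\E^{tB^{*}})_{t\ge 0}$ restricts to a semigroup on the Banach space $\cV_{0}$, and by expanding the exponential as a power series (or by a Gronwall argument on the variation-of-constants identity $\E^{tB^{*}}v=v+\int_{0}^{t}\E^{sB^{*}}B^{*}v\,\D s$) its $\cV$-operator norm is dominated by $\E^{tc}$ for a suitable constant $c$, which yields the factor $\E^{T\norm{B}_{\cL(\cH)}}$ displayed in $C_T$. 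Since $u$ and $\mu(\cHpluso)$ both lie in $\cHplus\cap\cV\subseteq\cV_{0}$, applying the elementary estimate $\norm{\bP_{d}^{\perp}w}\le\norm{\bP_{d}^{\perp}}_{\cL(\cV,\cH)}\norm{w}_{\cV}$ to $w=\E^{tB^{*}}u$ and to $w=\E^{tB^{*}}\mu(\cHpluso)$, and then taking the supremum over $t\in[0,T]$, produces
\[
C_{T,d}\le\E^{T\norm{B}_{\cL(\cH)}}\bigl(\norm{u}_{\cV}+\norm{\mu(\cHpluso)}_{\cV}\bigr)\,\norm{\bP_{d}^{\perp}}_{\cL(\cV,\cH)}.
\]
Combining with the first step and using $\norm{u}_{\cV}\le 1$ gives the claim with exactly the constant $C_{T}$ displayed in the statement. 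The only non-bookkeeping step—and hence the main (minor) obstacle—is the passage from $B^{*}(\cV_{0})\subseteq\cV_{0}$ to a quantitative $\cV$-norm bound on $\E^{tB^{*}}|_{\cV_{0}}$; once this is settled via closed graph plus a standard semigroup estimate, everything else reduces to quantities already controlled in the proof of Proposition~\ref{prop:kar22-existence-Riccati}.
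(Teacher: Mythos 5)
Your proposal follows essentially the same route as the paper's proof: bound the left-hand side by $K\,C_{T,d}$ from Proposition~\ref{prop:kar22-existence-Riccati} with $M=C$ the embedding constant of $\cV\hookrightarrow\cH$, then factor $\sup_{\norm{u}_{\cV}\leq 1}C_{T,d}$ through $\norm{\bP_{d}^{\perp}}_{\cL(\cV,\cH)}$ using the invariance of $\cV_{0}$ under $\E^{tB^{*}}$. Your closed-graph step showing $B^{*}|_{\cV_{0}}\in\cL(\cV_{0})$, needed to control $\norm{\E^{tB^{*}}}_{\cL(\cV_{0})}$, is in fact spelled out more carefully than in the paper, which at that point tacitly bounds by $\norm{\E^{tB^{*}}}_{\cL(\cH)}$; in both write-ups the displayed exponent $T\norm{B}_{\cL(\cH)}$ should strictly be the $\cV_{0}$-operator norm of $B^{*}$.
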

\begin{proof}
  Let $K$ and $C_{T,d}$ be as in
  Proposition~\ref{prop:kar22-existence-Riccati}. Then we have
  $\sup_{\norm{u}_{\cV}\leq 1}K C_{T,d}\leq
  C_{T}\norm{\bP_{d}^{\perp}}_{\cL(\cV,\cH)}$. Note that since
  $B^{*}(\cV_{0})\subseteq \cV_{0}$, we have $\E^{tB^{*}}(\cV_{0})\subseteq \cV_{0}$ for all
  $t\geq 0$, which together with $\norm{\mu(\cHpluso)}_{\cV}<\infty$ implies $\norm{\bP_{d}^{\perp}\E^{tB^{*}}\mu(\cHpluso)}\leq
  \norm{\bP_{d}^{\perp}}_{\cL(\cV,\cH)}\E^{t\norm{B}_{\cL(\cH)}}\norm{\mu(\cHpluso)}_{\cV}$.   
  Similarly, for every $u\in\cHplus$ with $\norm{u}_{\cV}\leq 1$ we see that
  \begin{align*}
    \sup_{t\in [0,T]}\norm{\bP_{d}^{\perp}\E^{tB^{*}}u}\leq
    \sup_{t\in[0,T]}\norm{\bP_{d}^{\perp}\E^{tB^{*}}}_{\cL(\cV,\cH)}&\leq
                                                                      \norm{\bP_{d}^{\perp}}_{\cL(\cV,\cH)}\sup_{t\in[0,T]}\norm{\E^{tB^{*}}}_{\cL(\cH)}\\
                                                                    &\leq  \norm{\bP_{d}^{\perp}}_{\cL(\cV,\cH)}\E^{T\norm{B}_{\cL(\cH)}}. 
  \end{align*}
  Note further that $\norm{u}\leq C\norm{u}_{\cV}\leq C$ and hence from
  the proof of Proposition~\ref{prop:kar22-existence-Riccati} we see that for
  $M=C$ we can take
  $K=\E^{L_{C}^{(1)}}(1+L_{C}^{(2)}T)(1+TH_{C})$ where $H_{C}$ is such that  $\norm{\psi_{d}(t,u)|}\vee\norm{\psi(t,u)|}\leq H_{C}$ for all $t\in [0,T]$
  and $\norm{u}\leq C$,
  $L^{(1)}_{C}=\norm{B}_{\cL(\cH)}+2(H_{C}+1)\norm{\mu(\cHpluso)}$
  and $L_{C}^{(2)}=\norm{b}+2(H_{C}+1)\int_{\cHpluso}\norm{\xi}^{2}\,m(\D\xi)$.
\end{proof}


\section{Affine finite-rank operator-valued
  processes}\label{sec:kar22-affine-finite-rank} 

In this section we construct a sequence of finite-rank operator-valued
affine processes associated with the Galerkin approximations
$(\phi_{d}(\cdot,\bP_{d}(u)),(\psi_{d}(\cdot,\bP_{d}(u)))_{d\in\MN}$. The
existence of this sequence is asserted in Proposition~\ref{prop:kar22-embedding-affine-main}. First, in
Section~\ref{sec:finite-rank-valued} we project the given admissible parameter
set onto spaces of finite-rank operators and prove that for every rank
$d\in\MN$, the projected sets can be identified with \emph{matrix-valued
  admissible parameter sets} as in~\cite[Definition 2.3]{CFMT11}. In
Section~\ref{sec:ident-with-matr} we derive from this the existence of a
sequence of affine processes with values in positive semi-definite matrices
associated with a matrix-representation of the projected admissible parameters. Subsequently, in Section~\ref{sec:kar22-finite-rank-affine} we
transform this sequence back into the space of self-adjoint Hilbert-Schmidt
operators and prove that this transformed sequence satisfies the asserted properties in
Proposition~\ref{prop:kar22-embedding-affine-main}.    

\subsection{Finite-rank admissible parameters}\label{sec:finite-rank-valued}
Assume that we are in the setting of Section~\ref{sec:kar22-main-results}. In
particular, let $(b,B,m,\mu)$ be an admissible parameter set as in
Definition~\ref{def:kar22-admissible} and let $(\cH_{d},\bP_{d})_{d\in\MN}$ be
a finite-rank projectional scheme in $\cH$ (with respect to the orthonormal
basis $(\be_{i,j})_{i\leq j\in\MN}$ in $\cH$). For any two measurable
spaces $(\cE_{1},\cB_{1})$ and $(\cE_{2},\cB_{2})$ and measurable
function $f\colon (\cE_{1},\cB_{1})\to (\cE_{2},\cB_{2})$, we denote the
push-forward of a measure $\mu_{1}\colon \cE_{1}\to [0,\infty]$ with respect
to $f$ by $f_{*}\mu_{1}$, i.e. $f_{*}\mu_{1}(A)=\mu(f^{-1}(A))$ for any
$A\in\cB_{2}$ and note that $f_{*}\mu_{1}$ is a proper measure on $(\cE_{2},\cB_{2})$. For
every $d\in\MN$ we define the Borel sets
$E_{d}\df\set{\xi\in\cHplus\colon\,0<\norm{\bP_{d}(\xi)}\leq1,\,\norm{\xi}>1}$
and $E_{d}^{0}\df\cHplus\cap\set{\xi\colon\bP_{d}(\xi)\neq 0}$. Note that
$E_{d}\subseteq E_{d}^{0}\subseteq \cHpluso$ for every $d\in\MN$ and for
any (vector-valued) measure $\lambda$ on $\cB(\cHpluso)$, the
Borel-$\sigma$-algebra on $\cHpluso$, we denote the restriction of $\lambda$
to the trace of the Borel-$\sigma$-algebra generated by the open sets in
$E_{d}^{0}$ by $\lambda\vert_{E_{d}^{0}}$.  Then we introduce the following
notion:
\begin{definition}\label{def:kar22-admissible-Galerkin}
  For every $d\in\MN$ we define the parameters $(b_{d},B_{d},m_{d},\mu_{d})$
  and $M_{d}$ as follows:
  \begin{defenum}
  \item\label{item:kar22-md} The measure $m_{d}\colon \cB(\cHplus_{d}\setminus\set{0})\to
    [0,\infty]$ is defined as the push-forward of $m\vert_{E_{d}^{0}}$ with
    respect to $\bP_{d}$, i.e.
    \begin{align*}
      m_{d}(\D\xi)\df (\bP_{d\,*}m\vert_{E_{d}^{0}})(\D\xi). 
    \end{align*}
  \item The vector \label{item:kar22-bd} $b_{d}\in \cH_{d}$ is given by
    \begin{align}\label{eq:kar22-bd}
      b_{d}\df \bP_{d}(b)+\int_{\bP_{d}(E_{d})}\xi\,m_{d}(\D\xi). 
    \end{align}
  \item\label{item:kar22-mud} The $\cHplus_{d}$-valued measure $\mu_{d}\colon
    \cB(\cHplus_{d}\setminus\set{0})\to \cHplus_{d}$ is defined as the
    $\bP_{d}$-projection of the push-forward of $\mu\vert_{E_{d}^{0}}$ with respect to
    $\bP_{d}$, i.e.
    \begin{align*}
      \mu_{d}(\D\xi)\df \bP_{d}(\bP_{d\,*}\mu\vert_{E_{d}^{0}})(\D\xi).
    \end{align*}
    Moreover, we define the $\cHplus_{d}$-valued measure $M_{d}$ on
    $\cHplus_{d}\setminus\set{0}$ as follows: For every
    $A\in\cB(\cHplus_{d}\setminus\set{0})$ we set
    \begin{align}\label{eq:kar22-Md} 
      M_{d}(A)\df
      \int_{E_{d}^{0}}\one_{A}(\bP_{d}(\xi))\frac{1}{\norm{\xi}^{2}}\bP_{d}(\mu\vert_{E_{d}^{0}}(\D\xi)).
    \end{align}
  \item\label{item:kar22-Bd} The linear operator $B_{d}\colon
    \cH_{d}\to \cH_{d}$ is defined by 
    \begin{align}\label{eq:kar22-Bd}
      B_{d}(u)\df
      \bP_{d}(B(u))+\int_{\bP_{d}(E_{d})} \xi \, \langle u,
      M_{d}(\D\xi)\rangle,\quad u\in\cH_{d}. 
    \end{align}
  \end{defenum}
\end{definition}

\vspace{3mm}

\begin{remark}\label{rem:kar22-existence-integral}
  Note that by the definition of $M_{d}$ in~\eqref{eq:kar22-Md} we have
  \begin{align}\label{eq:kar22-existence-integral}
    \int_{\bP_{d}(E_{d})}\xi\,\langle
    u,M_{d}(\D\xi)\rangle=\int_{E_{d}}\frac{\bP_{d}(\xi)}{\norm{\xi}^{2}}\,\langle u, \bP_{d}(\mu\vert_{E_{d}^{0}}(\D\xi))\rangle,\quad u\in\cH_{d}, 
  \end{align}
  and for all $u\in\cH_{d}$ we have 
  \begin{align*}
    \norm{\int_{E_{d}}\frac{\bP_{d}(\xi)}{\norm{\xi}^{2}}\langle
    u,\bP_{d}(\mu\vert_{E_{d}^{0}}(\D\xi))\rangle}&\leq
    \int_{\cHplus\cap\set{\norm{\xi}>1}}\norm{\xi}^{-1}\langle u, \mu(\D\xi)\rangle\\
    &\leq \norm{u}\norm{\mu(\cHplus\cap\set{\norm{\xi}>1})}<\infty,   
  \end{align*}
  so that the integral in~\eqref{eq:kar22-Bd} is well defined (in a Bochner
  sense) and uniformly norm-bounded in $d\in\MN$. Similarly, for all $d\in\MN$ the integral part
  in~\eqref{eq:kar22-bd} satisfies
  \begin{align*}
    \int_{\bP_{d}(E_{d})}\xi\,m_{d}(\D\xi)=\int_{E_{d}}\bP_{d}(\xi)\,m(\D\xi)\leq\int_{\cHplus\cap\set{\norm{\xi}>1}}\xi\,m(\D\xi)\in\cHplus.   
  \end{align*}
\end{remark}

\vspace{3mm}

In the following we give two lemmas that we will use in the next section.

\begin{lemma}\label{lem:kar22-affine-kernel}
Let $M(x,\D\xi)\colon \mathcal{B}(\cHpluso)\to [0,\infty]$ be the kernel
defined in~\eqref{eq:kar22-affine-kernel-M} and let $\mu_{d}$ and $M_{d}$ be as
in~\cref{item:kar22-mud}. Moreover, for every $x\in\cHplus_{d}$ we define the measure
$M_{d}(x,\D\xi)\colon\cB(\cHplus_{d}\setminus\set{0})\to [0,\infty]$ by  
\begin{align}\label{eq:kar22-Mdx}
  M_{d}(x,A)\df\int_{E_{d}^{0}}\one_{A}(\bP_{d}(\xi))\frac{1}{\norm{\xi}^{2}}\langle
  x,\bP_{d}(\mu(\D\xi))\rangle,\quad A\in\cB(\cHplus_{d}\setminus\set{0}). 
\end{align}
Then for every $A\in\cB(\cHplus_{d}\setminus\set{0})$ and $x\in\cHplus_{d}$ we
have $M_{d}(x,A)=\langle x, M_{d}(A)\rangle$ and
\begin{align}\label{eq:kar22-affine-kernel-M-1}
  \langle x, \mu_{d}(A)\rangle=\int_{\cHpluso}\one_{A}(\bP_{d}(\xi))\norm{\xi}^{2}M(x,\D\xi).  
\end{align}
\end{lemma}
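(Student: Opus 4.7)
Both identities are essentially bookkeeping arguments built on two facts: the orthogonal projection $\bP_d$ is self-adjoint on $\cH$, and the Bochner integral of a vector-valued function commutes with any bounded linear functional. The only real care needed is in correctly translating the push-forward definition of $\mu_d$ (and the choice of restriction to $E_d^0$) into an integral against $\mu$.

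For the first identity, I would fix $x \in \cHplus_d$ and note that the map $y \mapsto \langle x, y\rangle$ is a bounded linear functional on $\cH_d$. Applying this to the Bochner integral defining $M_d(A)$, i.e., pulling $\langle x, \cdot\rangle$ inside
\begin{align*}
  \langle x, M_d(A)\rangle = \Big\langle x,\,\int_{E_d^0}\one_A(\bP_d(\xi))\tfrac{1}{\norm{\xi}^2}\bP_d(\mu\vert_{E_d^0}(\D\xi))\Big\rangle,
\end{align*}
immediately yields $M_d(x,A)$ as defined in~\eqref{eq:kar22-Mdx}. Of course one should first justify that the integral for $M_d(A)$ makes sense as a Bochner integral in $\cH_d$, which is a local bound of the same type already used in Remark~\ref{rem:kar22-existence-integral}; this is the only mild verification needed here.

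For the second identity I would start from the right-hand side and substitute the definition $M(x,\D\xi) = \langle x,\mu(\D\xi)\rangle/\norm{\xi}^2$ to cancel the factor $\norm{\xi}^2$, leaving
\begin{align*}
  \int_{\cHpluso}\one_A(\bP_d(\xi))\norm{\xi}^2 M(x,\D\xi) = \int_{\cHpluso}\one_A(\bP_d(\xi))\,\langle x,\mu(\D\xi)\rangle.
\end{align*}
Since $x\in\cHplus_d$, self-adjointness of $\bP_d$ gives $\langle x,\mu(\D\xi)\rangle = \langle x,\bP_d(\mu(\D\xi))\rangle$; then pulling $\langle x,\cdot\rangle$ back outside the vector-valued integral (the reverse of step one) produces $\langle x,\int_{\cHpluso}\one_A(\bP_d(\xi))\bP_d(\mu(\D\xi))\rangle$.

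It then remains to identify this last integral with $\mu_d(A)$. For this I would observe that, since $0\notin A$, the integrand vanishes outside $E_d^0 = \{\xi\in\cHpluso : \bP_d(\xi)\neq 0\}$, so the integral reduces to one over $E_d^0$; then by the definition of push-forward,
\begin{align*}
  \int_{E_d^0}\one_A(\bP_d(\xi))\bP_d(\mu\vert_{E_d^0}(\D\xi)) = \bP_d\big(\mu\vert_{E_d^0}(\bP_d^{-1}(A))\big) = \bP_d\big((\bP_{d*}\mu\vert_{E_d^0})(A)\big) = \mu_d(A),
\end{align*}
which finishes the argument. The only conceptual step is the handling of the restriction to $E_d^0$ in the definition of the push-forward, and I expect that to be the one place where a careful line of exposition is required — everything else is routine linearity.
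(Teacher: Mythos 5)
Your proposal is correct and follows essentially the same route as the paper's proof: both rest on the change-of-variables formula for the push-forward, the self-adjointness of $\bP_{d}$, commuting $\langle x,\cdot\rangle$ with the vector-valued integral, and the observation that $\one_{A}(\bP_{d}(\xi))$ vanishes where $\bP_{d}(\xi)=0$ (so the integral over $E_{d}^{0}$ and over $\cHpluso$ agree). The only cosmetic difference is that you derive~\eqref{eq:kar22-affine-kernel-M-1} from right to left while the paper goes from left to right, and you spell out the (routine) first identity which the paper leaves implicit.
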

\begin{proof}
  Recall that $M(x,\D\xi)=\norm{\xi}^{-2}\langle x, \mu(\D\xi)\rangle$ is a
  measure on $\mathcal{B}(\cHpluso)$ and the form of this measure is
  unaffected by restricting it to the trace Borel-$\sigma$-algebras on $E_{0}^{d}$ for any
  $d\in\MN$. Now let $x\in\cHplus_{d}$ and $A\in\cB(\cHplus_{d}\setminus\set{0})$, then by
  definition we have $\langle x, \mu_{d}(A)\rangle=\langle
  x,(\bP_{d\,*}\mu\vert_{E_{d}^{0}})(A)\rangle$ and we obtain 
  \begin{align*}
    \langle
    x,(\bP_{d\,*}\mu\vert_{E_{d}^{0}})(A)\rangle&=\int_{E_{d}^{0}}\one_{\bP_{d}^{-1}(A)}(\xi)\norm{\xi}^{2}\langle
                                                  x, M\vert_{E_{d}^{0}}(\D\xi)\rangle\\
    &=\int_{\cHpluso}\one_{A}(\bP_{d}(\xi))\norm{\xi}^{2}M(x,\D\xi),
  \end{align*}
  where in the last equation we used that the integrand
  $\one_{A}(\bP_{d}(\xi))$ vanishes on the set $\set{\xi\in\cHplus\colon
    \bP_{d}(\xi)=0}$ since $\one_{A}(\bP_{d}(\xi))=\one_{A}(0)=0$ as $0\in
  A^{c}$ for $A\subseteq \cHplus_{d}\setminus\set{0}$.  
\end{proof}

In the next lemma we show that the function $F_{d}$ and $R_{d}$ from
Section~\ref{sec:galerk-appr-gener} can be expressed in terms of the
parameters $b_{d},B_{d},m_{d}$ and $M_{d}$ from
Definition~\ref{def:kar22-admissible-Galerkin}. This will help us to associate
the Galerkin approximations to affine processes in the subsequent section.

\begin{lemma}
  For every $u\in\cHplus_{d}$ we can express $F_{d}(u)$ and
  $R_{d}(u)$ by means of the parameters $b_{d},B_{d},m_{d}$ and
  $M_{d}$ as follows:
\begin{align}
  F_{d}(u)&=\langle b_{d},u\rangle-\int_{\cHplus_{d}\setminus\set{0}}\big(\E^{-\langle
                \xi,u\rangle}-1+\langle
                \chi(\xi),u\rangle\big)\,m_{d}(\D\xi),\label{eq:kar22-F-Galerkin}\\ 
  R_{d}(u)&=B^{*}_{d}(u)-\int_{\cHplus_{d}\setminus\set{0}}\big(\E^{-\langle \xi,u\rangle}-1+\langle
                \chi(\xi),u\rangle\big)\,M_{d}(\D\xi).\label{eq:kar22-R-Galerkin}
\end{align} 
\end{lemma}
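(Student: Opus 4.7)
The plan is a direct computation that propagates the projection $\bP_d$ through the defining integrals for $F$ and $R$, followed by a push-forward change of variables. The crucial observation is that for $u\in\cH_d$ one has $\langle \xi,u\rangle=\langle\bP_d(\xi),u\rangle$ and $\langle\chi(\xi),u\rangle=\langle\bP_d(\chi(\xi)),u\rangle$, so both integrands in $F(u)$ and in $\bP_d(R(u))$ depend on $\xi$ only through $\bP_d(\xi)$. Since the integrands also vanish whenever $\bP_d(\xi)=0$ (because then $\langle \xi,u\rangle=0$ and $\bP_d(\chi(\xi))=0$), the domain of integration reduces to $E_d^{0}$. The main subtlety will be that $\bP_d$ does not commute with the truncation $\chi$: on the set $E_d=\{\|\xi\|>1,\,0<\|\bP_d(\xi)\|\le 1\}$ we have $\bP_d(\chi(\xi))=0$ while $\chi(\bP_d(\xi))=\bP_d(\xi)$, so the identity $\bP_d\circ\chi=\chi\circ\bP_d$ fails precisely by $-\bP_d(\xi)\one_{E_d}(\xi)$, and this discrepancy is exactly what the drift corrections in $b_d$ and $B_d$ are designed to absorb.

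For \eqref{eq:kar22-F-Galerkin}, I start from $F_d(u)=F(u)$ and replace $\langle b,u\rangle$ by $\langle\bP_d(b),u\rangle$. Restricting the integral to $E_d^{0}$ and adding/subtracting $\langle\chi(\bP_d(\xi)),u\rangle$ inside the integrand splits it into two pieces: the first becomes, by the push-forward $m_d=\bP_{d,*}m|_{E_d^{0}}$, the integral
\[
\int_{\cHplus_d\setminus\{0\}}\!\!(\E^{-\langle\xi,u\rangle}-1+\langle\chi(\xi),u\rangle)\,m_d(\D\xi),
\]
and the second is $\int_{E_d}\langle\bP_d(\xi),u\rangle\,m(\D\xi)=\langle\int_{\bP_d(E_d)}\xi\,m_d(\D\xi),u\rangle$. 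Combining with $\langle\bP_d(b),u\rangle$ yields the required expression with $b_d$ as in \eqref{eq:kar22-bd}.

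For \eqref{eq:kar22-R-Galerkin}, I apply the same scheme to $R_d(u)=\bP_d(R(u))$. Pulling $\bP_d$ inside the $\cH$-valued integral (justified by the Bochner boundedness established in Remark~\ref{rem:kar22-existence-integral}) gives an integral against the $\cH_d^{+}$-valued measure $\xi\mapsto\|\xi\|^{-2}\bP_d(\mu|_{E_d^{0}}(\D\xi))$ on $E_d^{0}$, which under the push-forward $\bP_d$ is exactly $M_d$ by its definition \eqref{eq:kar22-Md}. The same truncation splitting as above produces, on the one hand, the integral $\int_{\cHplus_d\setminus\{0\}}(\E^{-\langle\xi,u\rangle}-1+\langle\chi(\xi),u\rangle)\,M_d(\D\xi)$ and, on the other, the correction term $\int_{\bP_d(E_d)}\langle\xi,u\rangle\,M_d(\D\xi)$. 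It then remains to identify $\bP_d(B^{*}(u))$ plus this correction with $B_d^{*}(u)$. For this I compute, for arbitrary $v\in\cH_d$ and using the self-adjointness of $\bP_d$ and Lemma~\ref{lem:kar22-affine-kernel} (in the form $\int f(\xi)\langle v,M_d(\D\xi)\rangle=\int f(\xi)\,M_d(v,\D\xi)$), that
\[
\langle v,B_d(u)\rangle=\langle\bP_d(B^{*}(v)),u\rangle+\int_{\bP_d(E_d)}\langle\xi,u\rangle\langle v,M_d(\D\xi)\rangle,
\]
so taking adjoints on $\cH_d$ yields $B_d^{*}(u)=\bP_d(B^{*}(u))+\int_{\bP_d(E_d)}\langle\xi,u\rangle M_d(\D\xi)$, which closes the argument.

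The main technical obstacle is the bookkeeping around the two distinct truncations together with the interplay of scalar-valued and $\cH$-valued measures; once the non-commutativity of $\bP_d$ with $\chi$ on $E_d$ is isolated and the push-forward identities for both $m_d$ and $M_d$ are applied correctly, the rest is algebraic. The identification of $B_d^{*}$ via testing against $v\in\cH_d$ is the only step that requires genuine care, as it uses that $\langle v,\bP_d(B(u))\rangle=\langle\bP_d(B^{*}(v)),u\rangle$ for $u,v\in\cH_d$.
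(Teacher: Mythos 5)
Your plan is correct and follows essentially the same route as the paper's proof: exploit $\langle\xi,u\rangle=\langle\bP_{d}(\xi),u\rangle$ for $u\in\cH_{d}$, reduce the integration domain to $E_{d}^{0}$ since the integrand vanishes where $\bP_{d}(\xi)=0$, trade $\bP_{d}\circ\chi$ for $\chi\circ\bP_{d}$ at the cost of the correction on $E_{d}$ that is absorbed into $b_{d}$ and $B_{d}$, and then push forward by $\bP_{d}$. If anything you are more careful than the paper, which only writes out the $R_{d}$ identity (declaring the $F_{d}$ case similar) and is loose about adjoints; your explicit identification $B_{d}^{*}(u)=\bP_{d}(B^{*}(u))+\int_{\bP_{d}(E_{d})}\langle\xi,u\rangle\,M_{d}(\D\xi)$ by testing against $v\in\cH_{d}$ is precisely the step the paper glosses over.
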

\begin{proof}
  We only proof the identity~\eqref{eq:kar22-R-Galerkin} as the proof
  of~\eqref{eq:kar22-F-Galerkin} is similar. Let $u\in\cHplus_{d}$ and note
  that we have $\langle \xi, u\rangle=\langle \bP_{d}(\xi), u\rangle$ for
  every $\xi\in\cHplus$ since $\bP_{d}$ is an orthogonal projection with
  respect to $\langle\cdot,\cdot\rangle$. Setting $M(\D\xi)\df
  \norm{\xi}^{-2}\mu(\D\xi)$ we see from the definition of $R_{d}$,
  \cref{item:kar22-Bd} and~\eqref{eq:kar22-existence-integral} that  
  \begin{align}
    R_{d}(u)&=\bP_{d}(B(u))-\int_{\cHpluso}\big(\E^{-\langle
              \bP_{d}(\xi), u\rangle}-1+\langle
              \bP_{d}(\chi(\xi)),u\rangle\big)\frac{\bP_{d}(\mu(\D\xi))}{\norm{\xi}^{2}}\nonumber\\
            &=B_{d}(u)-\int_{\cHpluso}\left(\E^{-\langle\bP_{d}(\xi),u\rangle}-1+\langle
              \chi(\bP_{d}(\xi)),u\rangle\right)\,\bP_{d}(M(\D\xi)).\label{eq:kar22-truncation-change}
  \end{align}
  Note that on the set $E_{d}^{0}=\cHplus\cap\set{\xi\in\cHplus \colon\,\bP_{d}(\xi)=0}$ the
  integrand on the right-hand side of~\eqref{eq:kar22-truncation-change}
  vanishes and hence we see that the integral coincides with
  \begin{align}\label{eq:kar22-truncation-change-2}
    \int_{E_{d}^{0}}\left(\E^{-\langle\bP_{d}(\xi),u\rangle}-1+\langle\chi(\bP_{d}(\xi)),u\rangle\right)\,\bP_{d}(M\vert_{E_{d}^{0}}(\D\xi)).  
  \end{align}
  From the definition of $M_{d}$ in~\eqref{eq:kar22-Md} and by the
  change-of-variables formula for push-forward measures, we conclude that the
  integral in~\eqref{eq:kar22-truncation-change-2} is equal to
  \begin{align*}
    \int_{\cHplus_{d}\setminus\set{0}}\big(\E^{-\langle\xi,u\rangle}-1+\langle
    \chi(\xi),u\rangle\big)\,M_{d}(\D\xi),
  \end{align*}
  which inserted back into~\eqref{eq:kar22-truncation-change} proves the identity~\eqref{eq:kar22-R-Galerkin}.
\end{proof}

\subsection{Identification with matrix-valued affine processes}\label{sec:ident-with-matr}
For every $d\in\MN$ we denote by
$(\mathbb{M}_{d},\langle\cdot,\cdot\rangle_{d})$ the space of
all real $d\times d$-matrices equipped with the trace inner-product $\langle
x,y\rangle_{d}\df \Tr(y^{\intercal}x)$ for $x,y\in\mathbb{M}_{d}$, where $y^{\intercal}\in\mathbb{M}_{d}$
denotes the transpose of $y$. The norm $\norm{\cdot}_{d}$ induced by $\langle
\cdot,\cdot\rangle_{d}$ is called the \textit{Frobenius norm}, which is
nothing else than the Hilbert-Schmidt norm in the case of $H=\MR_{d}$. Let us
denote the subspace of $\mathbb{M}_{d}$ consisting of all the symmetric $d\times
d$-matrices by $\MS_{d}$. For $d\in\MN$ we denote by
$\set{v_{1},\ldots,v_{d}}$ the standard basis of $\MR_{d}$ and define the
coordinate system $\Phi_{d}\colon \MR_{d}\to H_{d}$ associated with the basis
$\set{e_{1},\ldots, e_{d}}$ of $H_{d}$ by   
\begin{align}\label{eq:kar22-coordinate-system}
  \Phi_{d}(v_{i})=e_{i},\quad \text{ for } i=1,\ldots,d.  
\end{align}
The coordinate system $\Phi_{d}$ identifies the $d$-dimensional subspace $H_{d}$
with $\MR_{d}$ and we can represent every linear operator $A\in\cL(H_{d})$ as
a $d\times d$-matrix by using the mapping $i_{d}\colon\cL(H_{d})\to\MS_{d}$
given by  
\begin{align}\label{eq:kar22-canonical-identification-d}
  i_{d}(A)\df\Phi^{-1}_{d}\circ A\circ \Phi_{d},  
\end{align}
where under the usual matrix-identification we shall understand $i_{d}(A)$ as an
element in $\mathbb{M}_{d}$. Note that whenever $A$ is
self-adjoint, its matrix representation $i_{d}(A)$ is self-adjoint as
well, which can be seen by taking $x,y\in\MR_{d}$ and the brief computation
\begin{align*}
  (i_{d}(A)x,y)_{\MR^{d}}=(A\circ
  \Phi_{d}(x),\Phi_{d}(y))_{H}=(\Phi_{d}(x),A^{*}\circ\Phi_{d}(y))_{H}=(x,i_{d}(A)y)_{\MR^{d}}.  
\end{align*}
Under the mapping $i_{d}$ in~\eqref{eq:kar22-canonical-identification-d} we
identify $\cH_{d}\vert_{H_{d}}\subseteq \cL(H_{d})$ with $\MS_{d}$ and note that
$i_{d}$ is an isometry between $\MS_{d}$ and $\cH_{d}\vert_{H_{d}}$, i.e. it
identifies the Frobenius with the Hilbert-Schmidt norm. In the following, we
sometimes omit writing our the restriction $\rvert_{H_{d}}$, when it is clear
from the context. Moreover, we denote by $\MS_{d}^{+}$ the convex
cone of all symmetric positive semi-definite $d\times d$-matrices and observe
that the positivity is preserved under $i_{d}$,
i.e. $i_{d}(\cHplus_{d})=\MS_{d}^{+}$. In the following definition we
introduce yet another transformation of the parameters $b_{d}$, $B_{d}$ ,
$m_{d}$, $\mu_{d}$ and $M_{d}$ from
Definition~\ref{def:kar22-admissible-Galerkin}. This time by identifying the
Hilbert spaces $\cH_{d}$ and $\MS_{d}$:   

\begin{definition}\label{def:kar22-admissible-Galerkin-matrix}
  Let $(b,B,m,\mu)$ be an admissible parameter set as in
  Definition~\ref{def:kar22-admissible} and for $d\in\MN$ let
  $(b_{d},B_{d},m_{d},\mu_{d})$ and $M_{d}$ be as in
  Definition~\ref{def:kar22-admissible-Galerkin}. For every $d\in\MN$ we define the parameters 
  $(\tilde{b}_{d},\tilde{B}_{d},\tilde{m}_{d},\tilde{\mu}_{d})$ and $\tilde{M}_{d}$ as follows:
  \begin{defenum}
  \item\label{item:kar22-b-d} The matrix $\tilde{b}_{d}\in\MS_{d}^{+}$ is
    defined as $\tilde{b}_{d}\df i_{d}(b_{d})$.
  \item\label{item:kar22-B-d} The linear operator $\tilde{B}_{d}\colon
    \MS_{d}\to\MS_{d}$ is given by $\tilde{B}_{d}\df i_{d}\circ B_{d}\circ i_{d}^{-1}$. 
  \item\label{item:kar22-m-d} The measure $\tilde{m}_{d}\colon\cB(\MS_{d}^{+}\setminus\set{0})\to
    [0,\infty]$ is defined as the push-forward of $m_{d}$ with respect to $i_{d}$, i.e.
    \begin{align*}
     \tilde{m}_{d}(\D\xi)\df (i_{d\,*}m_{d})(\D\xi). 
    \end{align*}
  \item\label{item:kar22-mu-d} The matrix-valued measure $\tilde{\mu}_{d}\colon
    \cB(\MS_{d}^{+}\setminus\set{0})\to \MS_{d}^{+}$ is defined as the
    composition of $i_{d}$ and the push-forward of $\mu_{d}$ with respect to $i_{d}$, i.e.
    \begin{align*}
      \tilde{\mu}_{d}(\D\xi)=i_{d}((i_{d\,*}\mu_{d})(\D\xi)). 
    \end{align*}
    Moreover, we define the $\MS_{d}^{+}$-valued measure $\tilde{M}_{d}(\D\xi)$ as follows: For every
    $A\in\cB(\MS_{d}^{+}\setminus\set{0})$ we set 
  \begin{align*} 
    \tilde{M}_{d}(A)=\int_{\cHpluso}\one_{A}(i_{d}(\bP_{d}(\xi)))\frac{1}{\norm{\xi}^{2}}i_{d}(\bP_{d}(\mu(\D\xi))),
  \end{align*}
  and for every $x\in\MS_{d}^{+}$ we write $\tilde{M}_{d}(x,\D\xi)\df\langle x,\tilde{M}_{d}(\D\xi)\rangle$.
\end{defenum}
\end{definition}

Now, let $\chi_{d}\colon \MS_{d}\to\MS_{d}$ be defined as
$\chi_{d}(\xi)=\xi\one_{\norm{\xi}_{d}\leq 1}(\xi)$. In the next lemma we show
some crucial properties of the parameters
$(\tilde{b}_{d},\tilde{B}_{d},\tilde{m}_{d},\tilde{\mu}_{d})$ and
$\tilde{M}_{d}$.  
\begin{lemma}\label{lem:kar22-admissible-matrix-sense}
  Let $d\in\MN$ and $\tilde{b}_{d}$, $\tilde{B}_d$, $\tilde{m}_{d}$
  and $\tilde{M}_{d}$ defined as in
  Definition~\ref{def:kar22-admissible-Galerkin-matrix}. Then the following holds true: 
  \begin{lemenum}
  \item\label{item:kar22-m-matrix} $\int_{\MS_{d}^{+}\setminus\set{0}}
    \big(\norm{\xi}_{d}\vee
    \norm{\xi}^{2}_{d}\big)\tilde{m}_{d}(\D\xi)<\infty$.
  \item\label{item:kar22-b-matrix}
    $\tilde{b}_{d}-\int_{\MS_{d}^{+}\setminus\set{0}}\chi_{d}(\xi)\tilde{m}_{d}(\D\xi)\in\MS_{d}^{+}$.
  \item\label{item:kar22-mu-matrix} For every $A\in\cB(\MS_{d}^{+}\setminus\set{0})$ we
    have $\tilde{M}_{d}(A)\in\MS_{d}^{+}$ and
    \begin{align*}
      \int_{\MS_{d}^{+}\setminus\set{0}}\langle \chi_{d}(\xi),u\rangle_{d} \tilde{M}_{d}(x,\D\xi)<\infty,
    \end{align*}
    for all $x,u\in\MS_{d}^{+}$ such that $\langle x,u\rangle_{d}=0$.
   \item\label{item:kar22-second-moment-mu-d} For all $x\in\MS_{d}^{+}$ we
     have $\int_{\MS_{d}^{+}\setminus\set{0}} \norm{\xi}^2\tilde{M}_{d}(x,\D\xi)<\infty$.
  \item\label{item:kar22-B-matrix}  We have
    \begin{align}\label{eq:kar22-B-matrix-quasi-monotone}
      \langle \tilde{B}_d(x),u\rangle_{d}-\int_{\cHplus}\langle
      \chi_{d}(\xi),u\rangle_{d}\,\langle x,\tilde{M}_{d}(\D\xi)\rangle_{d}\geq 0, 
    \end{align}
    for all $x,u\in\MS_{d}^{+}$ such that $\langle x,u\rangle_{d}=0$.
  \end{lemenum}
\end{lemma}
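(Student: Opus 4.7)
The strategy is to translate each of the five claims, via the linear isometry $i_d$ and push-forward identities, into properties that follow from the admissibility of $(b,B,m,\mu)$. Two structural facts are used throughout: $\bP_d(u) = \sP_d u \sP_d$ preserves positivity, and $i_d$ is a linear isometry sending $\cHplus_d$ onto $\MS_d^+$ with $\chi_d \circ i_d = i_d \circ \chi$. In addition, one has the set identity $\bP_d(E_d) = \{\zeta \in \cHplus_d \colon 0 < \norm{\zeta} \le 1\}$: the inclusion ``$\subseteq$'' is immediate from positivity-preservation of $\bP_d$, and the reverse is obtained by lifting each such $\zeta$ to $\xi = \zeta + c\,\be_{d+1,d+1} \in E_d$ for $c > 0$ sufficiently large.

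For parts (i), (iii), (iv) the plan is to apply the push-forward formula $\int f\,\tilde m_d(\D\xi) = \int_{E_d^0} f(i_d(\bP_d\eta))\,m(\D\eta)$ and its analogue for $\tilde M_d(x,\cdot)$, and to split the integration at $\norm{\bP_d\eta} = 1$. The large-norm pieces are controlled by $\int \norm{\eta}^2\,m(\D\eta) < \infty$ for (i) and by $\norm{\mu(\cHpluso)} < \infty$ for (iii) and (iv), combined with $\norm{\bP_d\eta} \le \norm{\eta}$. Positivity of $\tilde M_d(A)$ in (iii) is immediate from $\mu(A) \in \cHplus$ and positivity-preservation of $\bP_d$ and $i_d$. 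The quasi-monotonicity integral in (iii) reduces under push-forward to $\int_{0 < \norm{\bP_d\eta} \le 1} \langle \eta, y\rangle\,M(w,\D\eta)$, where $w = i_d^{-1}(x)$ and $y = i_d^{-1}(u)$ lie in $\cHplus_d \subseteq \cHplus$ with $\langle w,y\rangle = 0$; on $\{\norm{\eta} \le 1\}$ this is controlled by the admissibility of $M(w,\cdot)$, and on $\{\norm{\eta} > 1\}$ by $\norm{y}\,\norm{w}\,\norm{\mu(\cHpluso)}$.

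Parts (ii) and (v) hinge on a cancellation between the correction terms built into $b_d$ and $B_d$ and the matrix truncation integrals. Using the set identity above together with push-forward,
\[
  \int_{\bP_d(E_d)} \xi\,m_d(\D\xi) = \int \chi(\xi)\,m_d(\D\xi), \qquad
  \int_{\bP_d(E_d)} \xi\,\langle w, M_d(\D\xi)\rangle = \int \chi(\xi)\,\langle w, M_d(\D\xi)\rangle,
\]
and, combined with $\chi_d \circ i_d = i_d \circ \chi$,
\[
  \tilde b_d - \int \chi_d(\xi)\,\tilde m_d(\D\xi) = i_d(\bP_d(b)), \qquad \langle \tilde B_d(x), u\rangle_d - \int \langle \chi_d(\xi), u\rangle_d\,\langle x, \tilde M_d(\D\xi)\rangle_d = \langle B(w), y\rangle.
\]
For (ii) I would establish $b \in \cHplus$: testing $\langle I_m, h\rangle = \int \langle \chi(\eta), h\rangle\,m(\D\eta) \ge 0$ against $h \in \cHplus$ and using self-duality of $\cHplus$ (tested via rank-one elements $h\otimes h$) gives $I_m \in \cHplus$; combined with $b - I_m \in \cHplus$ from the admissibility of $b$, one obtains $b \in \cHplus$, hence $\bP_d(b) \in \cHplus_d$ and $i_d(\bP_d(b)) \in \MS_d^+$. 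For (v), $\langle B(w), y\rangle = \langle B^*(y), w\rangle$, and admissibility of $B$ applied to $w,y \in \cHplus$ with $\langle w, y\rangle = 0$ gives $\langle B^*(y), w\rangle \ge \int \langle \chi(\eta), y\rangle\,M(w,\D\eta) \ge 0$.

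The main technical obstacle is the small-norm part of (i): the bound $\int \norm{\eta}^2\,m(\D\eta) < \infty$ alone does not control $\int_{\norm{\eta} \le 1} \norm{\bP_d\eta}\,m(\D\eta)$. The finite-dimensionality of $\cH_d$ resolves this through the norm equivalence $\norm{\bP_d\eta} \le \sum_{i \le j \le d} |\langle \eta, \be_{i,j}\rangle|$; on $\{\norm{\eta} \le 1\}$ the summands coincide with $|\langle \chi(\eta), \be_{i,j}\rangle|$, so the Pettis-type integrability in part (b) of~\cref{item:kar22-m-2moment}, applied with $h = \be_{i,j}$, provides the missing bound after summing over finitely many indices.
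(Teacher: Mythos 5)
Your proposal is correct and, for parts (i), (iii) and (iv), follows the paper's argument essentially verbatim: the decisive step for (i) --- bounding $\norm{\bP_{d}(\xi)}$ on the small-norm region by the finite sum $\sum_{i\leq j\leq d}|\langle \xi,\be_{i,j}\rangle|$ and invoking the Pettis-type condition (b) of Definition~\ref{def:kar22-admissible}\,i) coordinatewise, with the second-moment condition absorbing the contribution of $\set{\norm{\xi}>1}$ --- is exactly the paper's argument, and your explicit splitting of the quasi-monotonicity integral in (iii) at $\norm{\eta}=1$ is, if anything, more careful than the paper's one-line appeal to Definition~\ref{def:kar22-admissible}\,iii).

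The one place you genuinely diverge is the cancellation identity underlying (ii) and (v). You read $\int_{\bP_{d}(E_{d})}\xi\,m_{d}(\D\xi)$ literally via the change-of-variables formula, identify $\bP_{d}(E_{d})$ with $\set{\zeta\in\cHplus_{d}\colon 0<\norm{\zeta}\leq 1}$ by your lifting argument, and obtain the full cancellation $\tilde{b}_{d}-\int\chi_{d}(\xi)\,\tilde{m}_{d}(\D\xi)=i_{d}(\bP_{d}(b))$; this forces you to prove the extra fact $I_{m}\in\cHplus$ (your self-duality argument for that is fine). The paper instead evaluates that integral with upstairs domain $E_{d}$ rather than $\bP_{d}^{-1}(\bP_{d}(E_{d}))\cap E_{d}^{0}$, uses the indicator identity $\one_{E_{d}}-\one_{\set{0<\norm{\bP_{d}(\xi)}\leq 1}}=-\one_{\set{0<\norm{\xi}\leq 1}}$, and lands on $i_{d}(\bP_{d}(b-I_{m}))$, which is nonnegative directly from Definition~\ref{def:kar22-admissible}\,ii); correspondingly, in (v) the paper obtains $\langle B^{*}(y),w\rangle-\int\langle\chi(\eta),y\rangle\,M(w,\D\eta)$ rather than your $\langle B^{*}(y),w\rangle$. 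Both readings prove the lemma, since both candidate values lie in $\MS_{d}^{+}$ (resp.\ are nonnegative) by admissibility. But they assign different values to $\tilde{b}_{d}$ and $\tilde{B}_{d}$, and only the paper's reading is consistent with the representation $F_{d}(u)=\langle b_{d},u\rangle-\int(\cdots)\,m_{d}(\D\xi)$ and $R_{d}(u)=B_{d}^{*}(u)-\int(\cdots)\,M_{d}(\D\xi)$ established in the subsequent lemma, on which the construction of $X^{d}$ rests: the compensating term must correct precisely for the mismatch between the truncations $\chi(\xi)$ and $\chi(\bP_{d}(\xi))$, which occurs exactly on $E_{d}$. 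So before carrying your computation forward you should adopt the convention that the integrals in~\eqref{eq:kar22-bd} and~\eqref{eq:kar22-Bd} are taken upstairs over $E_{d}$.
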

\begin{proof}
  First, note that for every $E\in\cB(\MS^{+}_{d}\setminus\set{0})$ we have 
  \begin{align*}
    i_{d\,*}(\bP_{d\,*}m)(E)=m(\bP_{d}^{-1}(i_{d}^{-1}(E)))=m((i_{d}\circ\bP_{d})^{-1}(E))=(i_{d}\circ\bP_{d})_{\,*}m(E), 
  \end{align*}
  and $\tilde{m}_{d}(\D\xi)=((i_{d}\circ\bP_{d})_{\,*}m\rvert_{E_{d}^{0}})(\D\xi)$ by definition
  and the analogous statement holds for the measure $\tilde{M}_{d}$. To show
  \cref{item:kar22-m-matrix} we split the integral into two parts
  \begin{align}
    \int_{\MS_{d}^{+}\setminus\set{0}} \big(\norm{\xi}_{d}\vee
    \norm{\xi}^{2}_{d}\big)\,\tilde{m}_{d}(\D\xi)&=\int_{\set{\xi\in\MS_{d}^{+}\colon
    0<\norm{\xi}_{d}\leq 1}}
    \norm{\xi}_{d}\,\tilde{m}_{d}(\D\xi)\nonumber \\
    &\quad+\int_{\set{\xi\in\MS_{d}^{+}\colon\norm{\xi}_{d}>1}}\norm{\xi}^{2}_{d}\,\tilde{m}_{d}(\D\xi),\label{eq:kar22-m-matrix-2}
  \end{align}
  and consider the two integrals on the right-hand side
  of~\eqref{eq:kar22-m-matrix-2} separately. By the change-of-variable formula
  for pushforward measures and since $i_{d}$ is an isometry,
  i.e. $\norm{\xi}=\norm{i_{d}(\xi)}_{d}$ for $\xi\in\cHplus_{d}$, we deduce the following for the first integral in~\eqref{eq:kar22-m-matrix-2} 
  \begin{align}
    \int_{\set{\xi\in\MS_d^{+}\colon 0<\norm{\xi}_{d}\leq 1}} \| \xi \|_{d}
    \,\tilde{m}_{d}(\D\xi)&=\int_{\set{\xi\in\cH_{d}^{+}\colon
                            0<\norm{i_{d}(\xi)}_{d}\leq 1}}\norm{i_{d}(\xi)}_{d}\,m_{d}(\D\xi) \nonumber\\  
                          &=\int_{\set{\xi\in\cH_{d}^{+}\colon
                                 0<\norm{\xi}\leq 1}}\norm{\xi}\,m_{d}(\D\xi)\nonumber\\
                          &=\int_{\set{\xi\in\cH^{+}\colon
                                 0<\norm{\bP_{d}(\xi)}\leq 1}} \| \bP_{d}(\xi) \|
                                 \,m(\D\xi)\\
                          &\leq \int_{\cHpluso}
                                 (\sum_{i=1}^{d}\sum_{j=i}^{d}\langle \xi, \be_{i,j}\rangle^{2})^{\frac{1}{2}}
                                 \,m(\D\xi)\nonumber\\
                          &\leq  \sum_{i=1}^{d}\sum_{j=i}^{d} \int_{\cHpluso} |\langle \xi, \be_{i,j}\rangle|
                                 \,m(\D\xi) <\infty \label{eq:kar22-const-jump-check-2}
  \end{align}  
  where the inequality in~\eqref{eq:kar22-const-jump-check-2} follows from
  part (b) in \cref{item:kar22-m-2moment}, which yields
  \begin{align*}
    \int_{\cH^+ \cap \{ 0 < \| \xi \| \leq 1 \}}|\langle
    \xi,\be_{i,j}\rangle|\,m(\D\xi)<\infty,\quad\forall\, 1\leq i\leq j\leq d,
  \end{align*}
  together with part (a) of \cref{item:kar22-m-matrix} which yields
  \begin{align*}
   \int_{\cH^+ \cap \{\| \xi \| >1 \}}|\langle
    \xi,\be_{i,j}\rangle|\,\dm\leq \int_{\cH^+ \cap \{\| \xi \| >1
    \}}\norm{\xi}^{2}\,\dm<\infty,
  \end{align*}
  for all $1\leq i\leq j\leq d$. Similarly , for the second integral on the right-hand
  side of~\eqref{eq:kar22-m-matrix-2} we see that 
  \begin{align*}
    \int_{\set{\xi\in\MS_{d}^{+}\colon \norm{\xi}_{d}>1}}\norm{\xi}_{d}^2\,\tilde{m}_{d}(\D\xi)&
    \leq \int_{\cHplus\setminus\set{0}} \norm{\bP_{d}(\xi)}^2\,m(\D\xi)\\
                                                                             &\leq\int_{\cHplus\setminus\set{0}} \norm{\xi}^2m(\D\xi)<\infty,  
  \end{align*}
   which follows again from part (b) in \cref{item:kar22-m-2moment}.
   Next we show \cref{item:kar22-b-matrix}. By definition we have
   \begin{align*}
     \tilde{b}_{d}=i_{d}(b_{d})=i_{d}(\bP_{d}(b))+\int_{i_{d}(\bP_{d}(E_{d}))}\xi\,\tilde{m}_{d}(\D\xi) 
   \end{align*}
   and from \cref{item:kar22-drift} it follows that $b\in\cHplus$ and
   \begin{align*}
     \int_{i_{d}(\bP_{d}(E_{d}))}\xi\,\tilde{m}_{d}(\D\xi)=\int_{E_{d}}i_{d}(\bP_{d}(\xi))\,m(\D\xi).  
   \end{align*}
   Now, since $\bP_{d}(\cHplus)=\cHplus_{d}$ and also $i_d(\cHplus_{d})=\MS_{d}^{+}$ we see that
   $\tilde{b}_{d}\in\MS_{d}^{+}$. Moreover, since
   $\one_{E_{d}}-\one_{\cHplus \cap \set{0<\norm{\bP_{d}(\xi)}\leq
       1}}=-\one_{\cHplus\cap\set{0<\norm{\xi}\leq 1}}$ and
   \begin{align*}
   \bP_{d}(I_{m})&=\sum_{i=1}^{d}\sum_{j=i}^{d}\langle
     I_{m},\be_{i,j}\rangle\be_{i,j}=\sum_{i=1}^{d}\sum_{j=i}^{d}\big(\int_{\cHpluso}\langle
     \chi(\xi),\be_{i,j}\rangle\,m(\D\xi)\big)\be_{i,j}\\
     &=\int_{\cHpluso}\bP_{d}(\chi(\xi))\,m(\D\xi),  
   \end{align*}
   we conclude that
   \begin{align*}
     \tilde{b}_{d}-\int_{\MS_{d}^{+}\setminus\set{0}}\chi_{d}(\xi)\tilde{m}_{d}(\D\xi)=i_{d}\big(\bP_{d}\big(b-I_{m}\big)\big)\geq_{\MS_{d}^{+}} 0, 
   \end{align*}
   where it follows from~\cref{item:kar22-drift} that $b-I_{m}\in\cHplus$.
   We continue with \cref{item:kar22-mu-matrix} and show first that the measure
   $\tilde{M}_{d}$ is a sigma-finite measure on $\MS_{d}^{+}\setminus\set{0}$ 
   such that for every $A\in\cB(\MS_{d}^{+}\setminus\set{0})$ we have 
   $\tilde{M}_{d}(A)\in\MS_{d}^{+}$. For this, note that by definition
   $\mu(E)\in\cHplus$ for all $E\in\cB(\cHpluso)$ and the same holds of course
   for its restriction to the set $E_{d}^{0}$. Hence, this applied to the measurable set $(i_{d}\circ\bP_{d})^{-1}(A)\subseteq
   \cHplus_{d}$ gives $\tilde{M}_{d}(A)\in\MS_{d}^{+}$. Note that the
   kernel $\tilde{M}_{d}(x,\D\xi)\colon\cB(\MS_{d}^{+}\setminus\set{0})\to
   [0,\infty]$, for all $A\in\cB(\MS_{d}^{+}\setminus\set{0})$, satisfies 
  \begin{align*}
    \tilde{M}_{d}(x,A)=\int_{\cHpluso}\one_{A}(i_{d}(\bP_{d}(\xi)))\frac{1}{\norm{\xi}^{2}}\langle
    x, i_{d}(\bP_{d}(\mu(\D\xi)))\rangle_{d},  
  \end{align*}
  or equivalently $\tilde{M}_{d}(x,\D\xi)=i_{d}(i_{d\,*}M_{d}(x,\D\xi))$.   
  Moreover, let $x,u\in\MS_{d}^{+}$ such that $\langle x,u\rangle_{d}=0$, then
    \begin{align}
      \int_{\MS_{d}^{+}\setminus\set{0}}\langle \chi_{d}(\xi), u\rangle_{d}
      \tilde{M}_{d}(x,\D\xi) &=\int_{\cHpluso}\langle
                              \chi_{d}(i_{d}(\bP_{d}(\xi))),u\rangle_{d} \frac{\langle
                              x,
                               i_{d}(\bP_{d}(\mu(\D\xi)))\rangle_{d}}{\norm{\xi}^{2}}\nonumber\\
                             &= \int_{\cHpluso}\langle
                              \chi(\bP_{d}(\xi)),i_{d}^{-1}(u)\rangle \frac{\langle
                               i_{d}^{-1}(x),\bP_{d}(\mu(\D\xi))\rangle}{\norm{\xi}^{2}}\nonumber\\
                             &\leq \int_{\cHpluso} \langle
                               \bP_{d}(\xi), i_{d}^{-1}(u)\rangle \frac{\langle
                               i_{d}^{-1}(x),\mu(\D\xi)\rangle}{\norm{\xi}^{2}}\nonumber\\
      &=\int_{\cHpluso} \langle \xi, i_{d}^{-1}(u)\rangle \frac{\langle
                               i_{d}^{-1}(x),\mu(\D\xi)\rangle}{\norm{\xi}^{2}}<\infty,\label{eq:kar22-mu-matrix-1}
    \end{align}
    where the last inequality~\eqref{eq:kar22-mu-matrix-1} follows from
    \cref{item:kar22-affine-kernel} and since $i_{d}^{-1}(x)$,
    $i_{d}^{-1}(u)\in\cHplus$ satisfy $\langle
    i_{d}^{-1}(x),i_{d}^{-1}(u)\rangle=\langle x, u\rangle_{d}=0$. The
    property in~\eqref{eq:kar22-second-moment-mu-d} follows from 
    \begin{align*}
       \int_{\MS_{d}^{+}\setminus\set{0}}\norm{\xi}_{d}^{2}\tilde{M}_{d}(x,\D\xi)&\leq
                                                   \int_{\cHpluso}\norm{\bP_{d}(\xi)}^{2}\frac{\langle
                                                                                   i_{d}^{-1}(x),
                                                                                   \mu(\D\xi)\rangle}{\norm{\xi}^{2}}\\
      &\leq \big\langle i_{d}^{-1}(x),\mu(\cHpluso)\big\rangle<\infty. 
    \end{align*}
    Finally, we show \cref{item:kar22-B-matrix}. For this let
    $x,u\in\MS_{d}^{+}$ be such that $\langle x,u\rangle_{d}=0$ and note that
    \begin{align*}
      \int_{\MS_{d}^{+}\cap\set{0<\norm{\xi}_{d}\leq 1}}\langle
      \chi_{d}(\xi),u\rangle_{d}\,\tilde{M}_{d}(x,\D\xi)=
      \!\int_{\cHplus\cap \set{0<\norm{\bP_{d}(\xi)}\leq 1}}\!\langle
      \xi ,i_{d}^{-1}(u)\rangle\,M(i_{d}^{-1}(x),\D\xi),  
    \end{align*}
    as well as
    \begin{align*}
      \langle \tilde{B}_d(x), u\rangle_{d}=
      \langle B(i_{d}^{-1}(x)),i_{d}^{-1}(u)\rangle_{d}+\int_{E_{d}}\langle
      \bP_{d}(\xi), i_{d}^{-1}(u)\rangle_{d}\,M(i_{d}^{-1}(x),\D\xi).
    \end{align*}
    Now again, as $\one_{E_{d}}-\one_{\cHplus \cap \set{0<\norm{\bP_{d}(\xi)}\leq
        1}}=-\one_{\cHplus\cap\set{0<\norm{\xi}\leq 1}}$ and 
    $\langle i_{d}^{-1}(x), i_{d}^{-1}(u)\rangle=0$ we conclude the
    inequality~\eqref{eq:kar22-B-matrix-quasi-monotone} from
    \begin{align*}
      \langle B(i_{d}^{-1}(x)),i_{d}^{-1}(u)\rangle-
      \int_{\cHplus\cap \set{0<\norm{\xi}\leq 1}}\langle
      \xi,i_{d}^{-1}(u)\rangle\,M(i_{d}^{-1}(x),\D\xi)\geq 0,
    \end{align*}
    which holds true by \cref{item:kar22-linear-operator} and proves the
    last assertion of Lemma~\ref{lem:kar22-admissible-matrix-sense}.
  \end{proof}

  \vspace{1mm}

  For $d\in\MN$ let us denote by $D(\MRplus,\MS_{d}^{+})$ the Skorohod space
  of all c\`adl\`ag path from $\MRplus$ into $\MS_{d}^{+}$ and let
  $\cB(D(\MRplus,\MS_{d}^{+}))$ be the Borel-$\sigma$-algebra on
  $D(\MRplus,\MS_{d}^{+})$ with respect to the Skorohod topology. In the
  following proposition we assert the existence of a unique affine process on
  $\MS_{d}^{+}$ associated with an \emph{matrix-valued admissible parameter
    set} built from the parameters  $\tilde{b}_{d}$, $\tilde{B}_d$,
  $\tilde{m}_{d}$ and $\tilde{M}_{d}$ and with paths in $D(\MRplus,\MS_{d}^{+})$.

  \vspace{1mm}

  \begin{proposition}\label{prop:kar22-Galerkin-explicit}
    Let $d\in\MN$ and
    $(\tilde{b}_{d},\tilde{B}_d,\tilde{m}_{d},\tilde{\mu}_{d})$ and
    $\tilde{M}_{d}$ be as in
    Definition~\ref{def:kar22-admissible-Galerkin-matrix}. Then there exists a 
    unique Markov process
    $(\tilde{X}^{d},(\tilde{\MP}_{x})_{x\in\MS_{d}^{+}})$, with paths in
    $D(\MRplus,\MS_{d}^{+})$ and where $\tilde{\MP}_{x}^{d}$ denotes the law
    of $\tilde{X}^{d}$ given $\tilde{X}_{0}^{d}=x\in\MS_{d}^{+}$, such that
    for every $x\in\MS_{d}^{+}$ we have    
    \begin{align}\label{eq:kar22-affine-property-matrix}
      \EXspec{\tilde{P}_{x}^{d}}{\E^{-\langle
      \tilde{X}^{d}_{t},u\rangle_{d}}}=\E^{-\tilde{\phi}_{d}(t,u)-\langle
      x,\tilde{\psi}_{d}(t,u)\rangle_{d}},\quad t\geq 0,\,u\in\MS_{d}^{+},  
    \end{align}
    for $(\tilde{\phi}_{d}(\cdot,u),\tilde{\psi}_{d}(\cdot,u))$ the
    unique solution of the following equations: 
    \begin{subequations}
      \begin{empheq}[left=\empheqlbrace]{align}
        \,\frac{\partial \tilde{\phi}_{d}(t,u)}{\partial
          t}&=\tilde{F}_{d}(\tilde{\phi}_{d}(t,u)),
        &\tilde{\phi}_{d}(0,u)=0,\label{eq:kar22-phi-matrix}\\
        \,\frac{\partial \tilde{\psi}_{d}(t,u)}{\partial t}&=\tilde{R}_{d}(\tilde{\psi}_{d}(t,u)),
        &\tilde{\psi}_{d}(0,u)=u,\label{eq:kar22-psi-matrix}
      \end{empheq}
    \end{subequations}
    where the functions $\tilde{F}_{d}\colon \MS_{d}^{+}\to \MR$ and $\tilde{R}_{d}\colon
    \MS_{d}^{+}\to \MS_{d}$ are given by
    \begin{align*}
      \tilde{F}_{d}(u)&\df\langle \tilde{b}_{d},u\rangle_{d}-\int_{\MS_{d}^{+}\setminus\set{0}}\big(\E^{-\langle
                        \xi,u\rangle_{d}}-1+\langle\chi_{d}(\xi),u
                        \rangle_{d}\big)\,\tilde{m}_{d}(\D\xi),\\      
      \tilde{R}_{d}(u)&\df\tilde{B}^{*}_d(u)-\int_{\MS_{d}^{+}\setminus\set{0}}\big(\E^{-\langle
                        \xi,u\rangle_{d}}-1+\langle
                        \chi_{d}(\xi),u \rangle_{d}\big)\,\tilde{M}_{d}(\D\xi).
    \end{align*}
    Moreover, for all $x\in\MS_{d}^{+}$ the process $\tilde{X}^{d}$ satisfies $\tilde{\MP}_{x}(\set{\tilde{X}_{t}^{d}\in\MS^{+}_{d}\colon\, t\geq
      0})=1$ and is a square-integrable semimartingale on $\MS_{d}^{+}$ whose
    semimartingale characteristics 
    $(\tilde{A}^{d},\tilde{C}^{d},\tilde{\nu}^{d})$, with respect to
    $\chi_{d}$, are given by
    \begin{align*}
      \tilde{A}_{t}^{d}&=\int_{0}^{t}\big(\tilde{b}_{d}+\tilde{B}_d(\tilde{X}_{s}^{d})\big)\,\D
                         s, \quad \tilde{C}_{t}^{d}=0, \quad \tilde{\nu}^{d}(\D t,\D\xi)=
                         \big(\tilde{m}_{d}(\D\xi)+\tilde{M}_{d}(\tilde{X}_{t}^{d},\D\xi)\big)\D t. 
    \end{align*}
  \end{proposition}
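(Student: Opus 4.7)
The plan is to reduce Proposition~\ref{prop:kar22-Galerkin-explicit} directly to the existence and uniqueness theorem for affine processes on the cone of positive semi-definite matrices proved in~\cite{CFMT11}. The essential step is to verify that the tuple $(\tilde{b}_{d},\tilde{B}_d,\tilde{m}_{d},\tilde{\mu}_{d})$, together with the kernel $\tilde{M}_{d}(x,\D\xi) = \langle x, \tilde{M}_{d}(\D\xi)\rangle_{d}$, constitutes an \emph{admissible parameter set} in the sense of~\cite[Definition 2.3]{CFMT11}. Once this is in place, the corresponding existence theorem from~\cite{CFMT11} supplies the asserted Markov process $\tilde{X}^{d}$ on $\MS_{d}^{+}$, its exponential-affine Laplace transform, the associated generalized Riccati system, and the semimartingale description with the claimed characteristics.

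The main observation is that Lemma~\ref{lem:kar22-admissible-matrix-sense} has already packaged every individual condition needed for admissibility in the matrix-valued sense. Specifically, \cref{item:kar22-m-matrix} gives the integrability of $\tilde{m}_{d}$ against $\norm{\xi}_{d}\vee\norm{\xi}_{d}^{2}$, which is strictly stronger than the $\norm{\xi}_{d}^{2}\wedge 1$ condition imposed in~\cite{CFMT11}; \cref{item:kar22-b-matrix} asserts the positivity of the compensated drift; \cref{item:kar22-mu-matrix} and~\cref{item:kar22-second-moment-mu-d} show that $x\mapsto \tilde{M}_{d}(x,\D\xi)$ is a linear, $\MS_{d}^{+}$-valued kernel with second moments and satisfies the quasi-monotonicity condition~\eqref{eq:kar22-affine-kernel-quasi-mono} transcribed to the matrix setting; and \cref{item:kar22-B-matrix} verifies the inward-pointing condition on $\tilde{B}_{d}$ along the boundary of $\MS_{d}^{+}$. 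Together these are exactly the hypotheses on $(b,\beta,m,\mu)$ in~\cite[Definition 2.3]{CFMT11} with vanishing diffusion component.

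With admissibility in hand, I would invoke~\cite[Theorem 2.4]{CFMT11} to obtain a unique affine Markov process $(\tilde{X}^{d},(\tilde{\MP}_{x}^{d})_{x\in\MS_{d}^{+}})$ on $D(\MRplus,\MS_{d}^{+})$ whose Laplace transform admits the exponential-affine form~\eqref{eq:kar22-affine-property-matrix}, where $(\tilde{\phi}_{d}(\cdot,u),\tilde{\psi}_{d}(\cdot,u))$ is the unique global $C^{1}$-solution to~\eqref{eq:kar22-phi-matrix}--\eqref{eq:kar22-psi-matrix}; the explicit expressions for $\tilde{F}_{d}$ and $\tilde{R}_{d}$ then match the functionals in~\cite{CFMT11} once the matrix-valued parameters of Definition~\ref{def:kar22-admissible-Galerkin-matrix} are identified. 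The square-integrability of $\tilde{X}^{d}$ and the semimartingale characteristics $(\tilde{A}^{d},0,\tilde{\nu}^{d})$ with respect to the truncation function $\chi_{d}$ are read off from the same reference; the vanishing of $\tilde{C}^{d}$ reflects the fact that no diffusion coefficient enters Definition~\ref{def:kar22-admissible-Galerkin-matrix}.

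The argument is therefore essentially a bookkeeping exercise, and the only mild obstacle lies in ensuring that the integrability and quasi-monotonicity conditions of~\cite[Definition 2.3]{CFMT11} hold at the full matrix level rather than merely modulo the projection $\bP_{d}$; precisely this is what forces the compensating integrals of $\tilde{m}_{d}$ and $\tilde{M}_{d}$ to appear in the definitions of $\tilde{b}_{d}$ in~\cref{item:kar22-b-d} and $\tilde{B}_{d}$ in~\cref{item:kar22-B-d}, which in turn is why Lemma~\ref{lem:kar22-admissible-matrix-sense} is formulated with those particular compensations built in.
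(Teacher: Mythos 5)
Your overall strategy is the same as the paper's: reduce to the matrix-valued existence theory via the properties collected in Lemma~\ref{lem:kar22-admissible-matrix-sense}. However, there is one substantive imprecision that leaves a gap. The tuple $(\tilde{b}_{d},\tilde{B}_{d},\tilde{m}_{d},\tilde{M}_{d})$ is \emph{not} itself an admissible parameter set in the sense of~\cite{CFMT11} (or~\cite{May12}): the conventions there parametrize the drift \emph{without} jump compensation, because the jump measures are of finite variation near the origin. The paper therefore first forms the adjusted parameters
\begin{align*}
  \tilde{c}_{d}=\tilde{b}_{d}-\int_{\MS_{d}^{+}\setminus\set{0}}\chi_{d}(\xi)\,\tilde{m}_{d}(\D\xi),
  \qquad
  \tilde{D}_{d}(u)=\tilde{B}^{*}_{d}(u)-\int_{\MS_{d}^{+}\setminus\set{0}}\langle\chi_{d}(\xi),u\rangle_{d}\,\tilde{M}_{d}(\D\xi),
\end{align*}
and it is precisely for the tuple $(0,\tilde{c}_{d},\tilde{D}_{d},0,0,\tilde{m}_{d},\tilde{M}_{d})$ that \cref{item:kar22-b-matrix} and \cref{item:kar22-B-matrix} yield admissibility ($\tilde{c}_{d}\in\MS_{d}^{+}$ and quasi-monotonicity of $\tilde{D}_{d}$). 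The existence theorem then delivers Riccati equations in the \emph{uncompensated} form (with integrands $\E^{-\langle\xi,u\rangle_{d}}-1$), and an extra step is needed to check that re-inserting $\tilde{c}_{d}$ and $\tilde{D}_{d}$ reproduces exactly the compensated equations~\eqref{eq:kar22-phi-matrix}--\eqref{eq:kar22-psi-matrix} stated in the proposition. Your proposal skips this translation, and your closing remark attributes the compensating integrals in $\tilde{b}_{d}$ and $\tilde{B}_{d}$ to the wrong mechanism (those integrals over $E_{d}$ account for the change of truncation set under $\bP_{d}$, not for the passage between the two drift conventions).

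Two further points are asserted in the proposition but not argued in your proposal: the c\`adl\`ag realization (the paper invokes a separate path-regularity result) and the square-integrability of $\tilde{X}^{d}$, which the paper obtains from the finite-variation property of the matrix-valued process together with the second-moment bounds \cref{item:kar22-m-matrix} and~\eqref{eq:kar22-second-moment-mu-d} and a standard semimartingale criterion; neither follows verbatim from the existence theorem you cite. These are routine but should be stated.
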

  \begin{proof}
    Given the parameters $\tilde{b}_{d}$, $\tilde{B}_d$, $\tilde{m}_{d}$ and
    $\tilde{M}_{d}$ we define the following adjusted constant and linear drift
    parameters $\tilde{c}_{d}$ and $\tilde{D}_{d}(u)$ for $u\in\MS_{d}$ as
    \begin{align*}
      \tilde{c}_{d}\df
      \tilde{b}_{d}-\int_{\MS_{d}^{+}\setminus\set{0}}\chi_{d}(\xi)\tilde{m}_{d}(\D\xi),\quad \tilde{D}_{d}(u)\df \tilde{B}^{*}_d(u)-\int_{\MS_{d}^{+}\setminus\set{0}}\langle
      \chi_{d}(\xi),u\rangle_{d}\,\tilde{M}_{d}(\D\xi).  
    \end{align*}
    It then follows from the properties in~\cref{item:kar22-b-matrix}
    and~\eqref{eq:kar22-B-matrix-quasi-monotone} that
    $\tilde{c}_{d}\in\MS_{d}^{+}$ and $\langle
    \tilde{D}_{d}(u),x\rangle_{d}\geq 0$ for all $u,x\in\MS_{d}^{+}$ with $\langle u,x\rangle_{d}=0$. Together with the other properties shown
    in Lemma~\ref{lem:kar22-admissible-matrix-sense} we conclude that the
    parameter set
    $(0,\tilde{c}_{d},\tilde{D}_{d},0,0,\tilde{m}_{d},\tilde{M}_{d})$ is
    an \emph{admissible parameter set}  for $\MS_{d}^{+}$-valued affine
    process according to~\cite[Definition 3.1]{May12}. It thus follows from
    \cite[Theorem 2.4]{CFMT11} and~\cite[Theorem 3.2]{May12} that there exists
    a unique affine process $(\tilde{X}^{d})_{t\geq 0}$ with values in
    $\MS_{d}^{+}$ such that for every $t\geq 0$ and $u\in\MS_{d}^{+}$ the
    affine transform formula~\eqref{eq:kar22-affine-property-matrix} holds with
    $(\tilde{\phi}_{d}(\cdot,u),\tilde{\psi}_{d}(\cdot,u))$ being the unique
    solution to the following equations:  
    \begin{subequations}
      \begin{empheq}[left=\empheqlbrace]{align}
        \frac{\partial \tilde{\phi}_{d}(t,u)}{\partial t}=\langle
        \tilde{c}_{d},\tilde{\psi}_{d}(t,u)\rangle_{d}-\int_{\MS_{d}^{+}\setminus\set{0}}\big(\E^{-\langle
          \xi,\tilde{\psi}_{d}(t,u)\rangle_{d}}-1\big)\,\tilde{m}_{d}(\D\xi),\label{eq:kar22-phi-matrix-1}\\
        \frac{\partial \tilde{\psi}_{d}(t,u)}{\partial t}=
        \tilde{D}_{d}(\tilde{\psi}_{d}(t,u))-\int_{\MS_{d}^{+}\setminus\set{0}}\big(\E^{-\langle
          \xi,\tilde{\psi}_{d}(t,u)\rangle_{d}}-1\big)\,\tilde{M}_{d}(\D\xi),\label{eq:kar22-psi-matrix-1}
      \end{empheq}
    \end{subequations}
    and initial conditions $\tilde{\psi}_{d}(0,u)=u$ and
    $\tilde{\phi}_{d}(0,u)=0$. Inserting $\tilde{c}_{d}$ and $\tilde{D}_{d}$
    into~\eqref{eq:kar22-phi-matrix-1}-\eqref{eq:kar22-psi-matrix-1} proves
    the equivalence with
    equations~~\eqref{eq:kar22-phi-matrix}-\eqref{eq:kar22-psi-matrix}.\par{}
    The existence of a c\`adl\`ag version follows from~\cite{CT13} and we
    shall denote this version again by $(\tilde{X}^{d})_{t\geq 0}$. Moreover,
    we denote the law of $\tilde{X}^{d}$ given that $\tilde{X}^{d}_{0}=x$ by
    $\tilde{\MP}^{d}_{x}$. Note that the first, fourth and fifth
    component of
    $(0,\tilde{c}_{d},\tilde{D}_{d},0,0,\tilde{m}_{d},\tilde{M}_{d})$ are
    zero, which correspond to a vanishing \emph{diffusion} component as well
    as the absence of constant and linear \emph{killing terms}. By~\cite[Remark
    2.5]{CFMT11}, this together with the moment assumption
    in~\cref{item:kar22-m-matrix} and~\eqref{eq:kar22-second-moment-mu-d}
    implies that the $\MS_{d}^{+}$-valued affine process
    $(\tilde{X}^{d})_{t\geq 0}$ satisfies
    $\tilde{\MP}_{x}(\set{\tilde{X}_{t}^{d}\in\MS^{+}_{d}\colon\, t\geq
      0})=1$. For every $d\in\MN$ and $x\in\MS_{d}^{+}$ the law
    $\tilde{\MP}^{d}_{x}$ is thus defined on
    $\cB(D(\MRplus,\MS_{d}^{+}))$. Moreover, as the diffusion part is zero, it
    follows from~\cite{May12} that $(\tilde{X}^{d})_{t\geq 0}$ is of finite-variation.\par{}
    
    Moreover, it follows from \cite[Theorem 2.6]{CFMT11} that the process
    $\tilde{X}_{d}$ is a semimartingale with characteristics given by
    $\tilde{C}^{d}_{t}=0$, $\tilde{\nu}^{d}(\D t,\D\xi)=
    \big(\tilde{m}_{d}(\D\xi)+\tilde{M}_{d}(\tilde{X}_{t}^{d},\D\xi)\big)\D t$ 
    and 
    \begin{align*}
      \tilde{A}^{d}&=\int_{0}^{t}\big(\tilde{c}_{d}+\int_{\MS_{d}^{+}\setminus\set{0}}\chi_{d}(\xi)\,\tilde{m}_{d}(\D\xi)+\tilde{D}_{d}(\tilde{X}^{d}_{s})+\int_{\MS_{d}^{+}\setminus\set{0}}\chi_{d}(\xi)\langle
                     \tilde{X}^{d}_{s},\tilde{M}_{d}(\D\xi)\rangle\big)\,\D
                     s\\
                   &=\int_{0}^{t}\big(\tilde{b}_{d}+\tilde{B}_d(\tilde{X}^{d}_{s})\big)\,\D s,
    \end{align*}
    which proves the asserted form of the characteristic triplet
    $(\tilde{A}^{d},\tilde{C}^{d},\tilde{\nu}^{d})$. Lastly, we note that the
    process $(\tilde{X}_{t}^{d})_{t\geq 0}$ is of finite-variation, hence locally
    bounded and by Lemma~\ref{lem:kar22-admissible-matrix-sense} we conclude
    that $\int_{0}^{t}\int_{\MS_{d}^{+}\setminus\set{0}}\norm{\xi}_{d}^{2}\,\tilde{\nu}^{d}(\D
    t,\D\xi)<\infty$ for all $t\geq 0$, which by~\cite[Proposition 2.29
    b)]{JS03} implies that $X$ is a square-integrable martingale,
    i.e. $\EXspec{\tilde{P}_{x}^{d}}{\norm{\tilde{X}^{d}_{t}}_{d}^{2}}<\infty$
    for all $t\geq 0$.  
\end{proof}

As a corollary from \cite{May12} we can sharpen the property in
\cref{item:kar22-mu-matrix}.

\begin{corollary}\label{coro:kar22-projected-finited-variation}
  Let the assumption of Lemma~\ref{lem:kar22-admissible-matrix-sense}
  hold. Then for every $d\in\MN$ we have
  \begin{align}\label{eq:kar22-second-moment-mu-d}
    \int_{\MS_{d}^{+}\setminus\set{0}}(\norm{\xi}_{d}\vee \norm{\xi}_{d}^{2})\,\tilde{M}_{d}(\D\xi)<\infty.
  \end{align}
  Moreover, for every $d\in\MN$ and $\mu$ as in
  \cref{item:kar22-affine-kernel} we have
  \begin{align}\label{eq:kar22-finite-variation-projected}
    \int_{\cHpluso}\norm{\bP_{d}(\xi)}\frac{\mu(\D\xi)}{\norm{\xi}^{2}}<\infty,\quad
    \forall\,d\in\MN.
  \end{align}
\end{corollary}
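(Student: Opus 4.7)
The plan is to apply Mayerhofer's structural theorem~\cite{May12} to the $\MS_d^+$-valued pure-jump affine process $\tilde X^d$ from Proposition~\ref{prop:kar22-Galerkin-explicit} and then translate the resulting integrability back to $\mu$ via the pushforward relation of Definition~\ref{def:kar22-admissible-Galerkin-matrix}.

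For the first assertion~\eqref{eq:kar22-second-moment-mu-d}, I would use that the admissible parameter set $(0,\tilde{c}_{d},\tilde{D}_{d},0,0,\tilde m_d,\tilde M_d)$ underlying $\tilde X^d$ has vanishing diffusion component (as noted in the proof of Proposition~\ref{prop:kar22-Galerkin-explicit}). Mayerhofer's theorem~\cite{May12} then forces the corresponding affine process to have sample paths of finite variation, which at the level of the admissible parameters is equivalent to
\[
\int_{\MS_d^+\setminus\{0\}}\|\xi\|_d\,\tilde M_d(d\xi)<\infty
\]
as a norm-finite $\MS_d^+$-valued Bochner integral. Combined with the second-moment control from Lemma~\ref{lem:kar22-admissible-matrix-sense}~\cref{item:kar22-second-moment-mu-d} (which, by duality in the finite-dimensional space $\MS_d$, upgrades to a norm-finite vector-valued integral), this yields~\eqref{eq:kar22-second-moment-mu-d}.

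For~\eqref{eq:kar22-finite-variation-projected}, the change-of-variable formula applied to the pushforward definition of $\tilde M_d$, together with the isometry property of $i_d$, gives
\[
i_d^{-1}\!\left(\int_{\MS_d^+\setminus\{0\}}\|\eta\|_d\,\tilde M_d(d\eta)\right)=\bP_d\!\left(\int_{\cHpluso}\|\bP_d(\xi)\|\,\tfrac{\mu(d\xi)}{\|\xi\|^2}\right),
\]
so the first step forces norm-finiteness of the $\bP_d$-projection of the right-hand $\cHplus$-valued integral. To lift this to norm-finiteness of the full integral, I would test against $\langle \mu(d\xi) h,h\rangle_H$ for $h\in H$: for $h\in H_d$ the bound is immediate from the above identity; for $h\in H\setminus H_d$ I would apply Mayerhofer's theorem at a larger rank $d'\geq d$ chosen so that $h\in H_{d'}$, and use the pointwise monotonicity $\|\bP_d(\xi)\|\leq\|\bP_{d'}(\xi)\|$ to dominate the integrand. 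Mixed and off-diagonal contributions are then absorbed through Cauchy--Schwarz, exploiting that $\mu$ is $\cHplus$-valued.

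The hard part will be the second step, where the $\bP_d$-projected estimate must be promoted to control the full $\cHplus$-valued integral. This is subtle because $\bP_d$ does not preserve the cone order in a way that would allow dominating $\bP_d^\perp\mu$ by $\bP_d\mu$; one instead has to combine the rank-dependent bounds from~\cite{May12} uniformly across all $d'\geq d$ with the positivity structure of the vector-valued measure $\mu$.
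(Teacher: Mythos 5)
Your handling of \eqref{eq:kar22-second-moment-mu-d} is exactly the paper's argument: the parameter set $(0,\tilde{c}_{d},\tilde{D}_{d},0,0,\tilde{m}_{d},\tilde{M}_{d})$ has vanishing diffusion part, \cite[Theorem 3.12]{May12} then yields $\int_{\set{0<\norm{\xi}_{d}\leq 1}}\norm{\xi}_{d}\,\tilde{M}_{d}(\D\xi)<\infty$, and the region $\set{\norm{\xi}_{d}>1}$ is covered by part iv) of Lemma~\ref{lem:kar22-admissible-matrix-sense}. For \eqref{eq:kar22-finite-variation-projected} your first step — the change-of-variables identity for the pushforward $\tilde{M}_{d}$ — is also precisely what the paper does; the difference is that the paper stops there and reads \eqref{eq:kar22-finite-variation-projected} directly off that identity, whereas you correctly observe that the identity only controls $\bP_{d}$ of the $\cHplus$-valued integral, i.e.\ the pairings $\langle y,\cdot\rangle$ for $y\in\cHplus_{d}$, and not the integral against arbitrary $y\in\cHplus$.

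Having located that subtlety, however, your proposed repair does not close it. The $d'\geq d$ device together with $\norm{\bP_{d}(\xi)}\leq\norm{\bP_{d'}(\xi)}$ does extend the estimate to every finite-rank direction $y\in\bigcup_{d'\geq d}\cHplus_{d'}$ — and this is in fact all the paper ever uses downstream (Remark~\ref{rem:kar22-pettis-centering}~i) only claims finite variation of $M(x,\D\xi)$ in directions with finitely many non-zero coordinates). But the passage to general $y\in\cHplus$ would require, as you yourself write, bounds from \cite{May12} that are \emph{uniform} in $d'$, and such uniformity is not available: the finite-variation estimate of \cite[Theorem 3.12]{May12} is intrinsically dimension-dependent, and the failure of uniformity in the rank is exactly the mechanism exhibited in Section~\ref{sec:simple-affine-proc}, where $\sup_{d}\int_{\cHplus\cap\set{0<\norm{\xi}\leq 1}}\norm{\bP_{d}(\xi)}\,M(x,\D\xi)=\int_{\cHplus\cap\set{0<\norm{\xi}\leq 1}}\norm{\xi}\,M(x,\D\xi)=\infty$. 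A monotone-convergence pass from $\bP_{d'}(y)$ to $y$ is also blocked because $\bP_{d'}(y)\leq_{\cHplus}y$ is false for general $y\in\cHplus$, as you note. So either \eqref{eq:kar22-finite-variation-projected} is read as the projected statement that both you and the paper actually establish (which suffices for every later use), or an idea beyond what either proof supplies is required; your second step as described would not go through.
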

\begin{proof}
  From Proposition~\ref{prop:kar22-Galerkin-explicit} it follows that
  $(0,\tilde{c}_{d},\tilde{D}_{d},0,0,\tilde{m}_{d},\tilde{M}_{d})$ is an
  admissible parameter set as in~\cite[Definition 2.3]{CFMT11}. It then
  follows from \cite[Theorem 3.12]{May12} (which proves that the
  state-dependent jump measure $\tilde{M}_{d}(x,\D\xi)$ is of
  finite-variation) implies that for all $d\in\MN$ the
  $\MS_{d}^{+}$-valued measure $\tilde{M}_{d}$ satisfies
  $\int_{\set{\xi\in\MS_{d}^{+}\colon 0<\norm{\xi}_{d}\leq
      1}}\norm{\xi}_{d}\tilde{M}_{d}(\D\xi)<\infty$, which by the definition of
  $\tilde{M}_{d}(\D\xi)$ implies $\int_{\set{\xi\in\cHplus\colon
      0<\norm{\xi}\leq
      1}}\norm{\bP_{d}(\xi)}\frac{\mu(\D\xi)}{\norm{\xi}^{2}}<\infty$, which
  yields~\eqref{eq:kar22-finite-variation-projected}. 
\end{proof}
   
\begin{remark}\label{rem:kar22-pettis-centering}
  \begin{enumerate}
  \item[i)] Note that from~\eqref{eq:kar22-finite-variation-projected} we conclude
    that the state-dependent jump-measure $M(x,\D\xi)=\norm{\xi}^{-2}\langle
    x,\mu(\D\xi)\rangle$ is of finite-variation
    in every direction $\be_{i,j}$, for $i\leq j\in\MN$, and in every direction
    $v\in\cHplus$ with at most finitely many non-zero coordinates. However, in
    contrast to the finite-dimensional case in $\MS_{d}^{+}$,
    see~\cite{May12}, this in general does not imply that $M(x,\D\xi)$ is
    of finite-variation,
    i.e. $\int_{\cHpluso}\!\norm{\xi}M(x,\D\xi)\!<\!\infty $ ($\,\forall\,
    x\in\cHplus$). Indeed, due to
    the infinite-dimensionality of $\cH$ there are ``infinite many
    directions'', in each of which the jumps evolve with finite-variation, but
    in sum, over all coordinates, the variation could be infinite, see Section~\ref{sec:kar22-regul-affine-proc}. 
  \item[ii)] The situation described in i) is a typical, although not necessary, infinite-dimensional
    phenomenon. Indeed, let $V$ be an infinite-dimensional Banach space and $D_{0}\subseteq V$, then
    the question whether $\int_{D_{0}}\langle
    \xi,u\rangle_{V^{*}}\,\nu(\D\xi)<\infty$  for all $u\in V^{*}$, i.e. the
    Pettis integrability on $D_{0}$, implies
    $\int_{D_{0}}\norm{\chi(\xi)}_{V}\,\nu(\D\xi)$, i.e. the Bochner
    integrability on $D_{0}$, where $V^{*}$ denotes the Banach dual of $V$
    with dual pairing $\langle \cdot,\cdot\rangle_{V^{*}}$, also depends on
    the space $V$. In case of
    Hilbert-Schmidt operators, $D_{0}=\set{\xi\in\cHplus\colon 0<\norm{\xi}\leq
      1}$ and $\nu(\D\xi)=\nu(x,\D\xi)$ this implication does not
    hold true. In contrast, in an analogous situation on the space of
    trace-class operators the above implication does hold, see~\cite{PR06}.  
  \end{enumerate}
\end{remark}  
\subsection{Proof: Existence of finite-rank operator-valued affine processes}\label{sec:kar22-finite-rank-affine}

For every $d\in\MN$, let $(\tilde{X}^{d},(\tilde{\MP}^{d}_{x})_{x\in\MS_{d}^{+}})$
be the $\MS_{d}^{+}$-valued affine process given by
Proposition~\ref{prop:kar22-Galerkin-explicit}. More precisely, let
$\tilde{X}^{d}$ be a version with paths in $\Omega=D(\MRplus,\MS_{d}^{+})$ and
denote by $\tilde{\MP}^{d}_{x}$ the law of $X^{d}$, defined on $\cB(\Omega)$,
given $\tilde{X}_{0}^{d}=x\in\MS_{d}^{+}$. Moreover, let us denote by
$(\tilde{\cF}_{t}^{d})_{t\geq 0}$ the natural filtration of the process
$\tilde{X}^{d}$. By identifying the cones $\MS_{d}^{+}$ and $\cH_{d}^{+}$
under the mapping $i_{d}^{-1}$, we define the process $X^{d}=(X^{d}_{t})_{t\geq 0}$ as  
\begin{align*}
  X^{d}_{t}\df
  i_{d}^{-1}(\tilde{X}_{t}^{d})=\Phi_{d}\circ\tilde{X}_{t}^{d}\circ\Phi_{d}^{-1},\quad
  t\geq 0.  
\end{align*}
Note that the process $(X^{d}_{t})_{t\geq 0}$ has paths in
$D(\MRplus,\cHplus_{d})$ and the law of $X^{d}$ is given by the push-forward
measure $(i_{d}^{-1})_{*}\tilde{\MP}_{x}$ for $x\in\MS_{d}^{+}$, where we
understand that $i_{d}^{-1}$ acts pointwise on the functions in
$D(\MRplus,\MS_{d}^{+})$ such that
$i_{d}^{-1}(D(\MRplus,\MS_{d}^{+}))=D(\MRplus,\cHplus_{d})$. Moreover, we see that
$D(\MRplus,\cHplus_{d})\subseteq D(\MRplus,\cHplus)$ for all $d\in\MN$, see
~\cite[Remark 4.5]{Jak86}. For every $x\in\cHplus$ we define the
measure $\MP_{x}^{d}$ on $D(\MRplus,\cHplus)$ as
\begin{align*}
\MP^{d}_{x}(A)=(i_{d}^{-1})_{*}\tilde{\MP}^{d}_{i_{d}(\bP_{d}(x))}(A\cap
  D(\MRplus,\cHplus_{d})),\quad A\in\cB(D(\MRplus,\cHplus)).  
\end{align*}
Note that
$\MP^{d}_{x}\big(X_{0}^{d}=\bP_{d}(x)\big)=\tilde{\MP}^{d}_{i_{d}(\bP_{d}(x))}\big(\tilde{X}_{0}^{d}=i_{d}(\bP_{d}(x))\big)=1$
and the process $(X^{d},(\MP_{x}^{d})_{x\in\cHplus})$ is again a Markov
process on the ambient space $\cHplus$ with respect to its natural filtration
$\MF^{d}=(\cF_{t}^{d})_{t\geq 0}$ and we set
$\cF^{d}=\cF_{\infty}^{d}$. Moreover, for every $x\in\cHplus$ and Markov
process $X^{d}$, denoting by $\mathcal{N}^{d}_{x}$ the collection of all $\MP_{x}^{d}$-null sets of
$\cF^{d}$, we define $\bar{\cF}_{t}\df\cF_{t}\vee \mathcal{N}_{x}^{d}$ for
every $t\geq 0$ and set $\bar{\MF}^{d}\df (\bar{\cF}_{t})_{t\geq 0}$, i.e. $\bar{\MF}^{d}$ is the usual augmented
filtration of $X^{d}$ and the process $X^{d}$ is still a Markov process with
respect to $\bar{\MF}^{d}$. In addition to that, we prove in the following
proposition that $(X^{d},(\MP_{x}^{d})_{x\in\cH})$ satisfies an affine
transform formula associated with the Galerkin approximations
in~\eqref{eq:kar22-Riccati-Galerkin-phi}-\eqref{eq:kar22-Riccati-Galerkin-psi}
and the canonical process of $\MP_{x}^{d}$ on $D(\MRplus,\MS_{d}^{+})$ is a
semimartingale with respect to the stochastic basis
$(\Omega,\bar{\cF}^{d},\bar{\MF}^{d},\MP_{x}^{d})$.  

\begin{proposition}\label{prop:kar22-embedding-affine}
  Let $(b,B,m,\mu)$ be an admissible parameter set and for $d\in\MN$
  let $(b_{d},B_{d},m_{d},\mu_{d})$ and $M_{d}$ be as in
  Definition~\ref{def:kar22-admissible-Galerkin}. Then for every $d\in\MN$ the
  process $(X^{d},(\MP_{x})_{x\in\cHplus})$ defined as above is a Markov
  process on $\cHplus$ such that for every $x\in \cHplus$ we have  
  \begin{align}\label{eq:kar22-affine-Galerkin-1}
    \EXspec{\MP_{x}^{d}}{\E^{-\langle X^{d}_{t},
    \bP_{d}(u)\rangle}}=\E^{-\phi_{d}(t,\bP_{d}(u))-\langle
    \bP_{d}(x),\psi_{d}(t,\bP_{d}(u))\rangle},\quad t\geq 0,\, u\in\cHplus_{d},  
  \end{align}
  for $\big(\phi_{d}(\cdot,\bP_{d}(u)),\psi_{d}(\cdot,\bP_{d}(u))\big)$ the
  unique solution
  of~\eqref{eq:kar22-Riccati-Galerkin-phi}-\eqref{eq:kar22-Riccati-Galerkin-psi}.
  Moreover, for every $x\in\cHplus$ we have
  \begin{align}\label{eq:kar22-conservative-semi-Galerkin}
    \MP_{x}^{d}(\set{X_{t}^{d}\in\cHplus_{d}\colon\, t\geq 0})=1,  
  \end{align}
  and the canonical process of $\MP_{x}^{d}$ on $D(\MRplus,\cHplus)$, still
  denoted by $(X_{t}^{d})_{t\geq 0}$, is a semimartingale with respect to the
  stochastic basis $(\Omega,\bar{\cF}^{d},\bar{\MF}^{d},\MP_{x}^{d})$ whose
  semimartingale characteristics $(A^{d},C^{d},\nu^{d})$, with respect to
  $\chi$, are given by:   
  \begin{align*}
    A_{t}^{d}&=\int_{0}^{t}b_{d}+B_{d}(X_{s}^{d})\D s,\quad
               C^{d}_{t}&=0,\quad \nu^{d}(\D t,\D\xi)&= \big(m_{d}(\D\xi)+M_{d}(X_{t}^{d},\D\xi)\big)\D t.
  \end{align*}
\end{proposition}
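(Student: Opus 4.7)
The plan is to transfer all asserted properties from the matrix-valued affine process $\tilde X^{d}$ on $\MS_{d}^{+}$, constructed in Proposition~\ref{prop:kar22-Galerkin-explicit}, to the operator-valued process $X^{d}=i_{d}^{-1}(\tilde X^{d})$ via the linear isometry $i_{d}\colon \cH_{d}\to\MS_{d}$ introduced in~\eqref{eq:kar22-canonical-identification-d}. Since $i_{d}$ is bijective and isometric and $i_{d}(\cHplus_{d})=\MS_{d}^{+}$, the Markov property, the conservativeness assertion~\eqref{eq:kar22-conservative-semi-Galerkin}, and the c\`adl\`ag path-property for $X^{d}$ follow directly by pushforward from the analogous statements for $\tilde X^{d}$ together with the inclusion $D(\MRplus,\cHplus_{d})\subseteq D(\MRplus,\cHplus)$. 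Moreover, the augmented natural filtration of $X^{d}$ under $\MP_{x}^{d}$ is image of that of $\tilde X^{d}$ under $\tilde\MP^{d}_{i_{d}(\bP_{d}(x))}$, so any (sub-)martingale identities transfer as well.

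The main technical step is to identify the Riccati pairs. Specifically, I will show that for every $v\in\cHplus_{d}$,
\begin{align*}
\tilde F_{d}(i_{d}(v))=F_{d}(v),\qquad i_{d}^{-1}\bigl(\tilde R_{d}(i_{d}(v))\bigr)=R_{d}(v).
\end{align*}
The first identity follows by writing out $\tilde F_{d}$ as in Proposition~\ref{prop:kar22-Galerkin-explicit}, using $\langle i_{d}(v),i_{d}(w)\rangle_{d}=\langle v,w\rangle$, $i_{d}^{-1}(\tilde b_{d})=b_{d}$, $\tilde m_{d}=i_{d\,*}m_{d}$, and the compatibility $\chi_{d}\circ i_{d}=i_{d}\circ\chi$ (which holds since $i_{d}$ is an isometry), followed by the change-of-variables formula, which reduces $\tilde F_{d}(i_{d}(v))$ to the expression~\eqref{eq:kar22-F-Galerkin} for $F_{d}(v)$. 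The second identity is proved the same way using $\tilde M_{d}=i_{d\,*}M_{d}\circ i_{d}$ and $i_{d}^{-1}\circ\tilde B^{*}_{d}\circ i_{d}=B_{d}^{*}$. By the uniqueness of the continuously differentiable solutions (Proposition~\ref{prop:kar22-existence-Riccati} and~\eqref{eq:kar22-phi-matrix}--\eqref{eq:kar22-psi-matrix}), this gives $\tilde\phi_{d}(t,i_{d}(\bP_{d}(u)))=\phi_{d}(t,\bP_{d}(u))$ and $i_{d}^{-1}(\tilde\psi_{d}(t,i_{d}(\bP_{d}(u))))=\psi_{d}(t,\bP_{d}(u))$. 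Plugging this together with $\langle X_{t}^{d},\bP_{d}(u)\rangle=\langle \tilde X_{t}^{d},i_{d}(\bP_{d}(u))\rangle_{d}$ into~\eqref{eq:kar22-affine-property-matrix} yields the affine transform formula~\eqref{eq:kar22-affine-Galerkin-1}.

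For the semimartingale characteristics, I will exploit that the push-forward of a semimartingale under a linear bijection is again a semimartingale whose characteristics are transformed by the same map. Applying $i_{d}^{-1}$ to the characteristics $(\tilde A^{d},\tilde C^{d},\tilde\nu^{d})$ of $\tilde X^{d}$, using $i_{d}^{-1}(\tilde b_{d})=b_{d}$ and $i_{d}^{-1}\circ\tilde B_{d}\circ i_{d}=B_{d}$ to get the drift, and the pushforward relations $m_{d}=(i_{d}^{-1})_{*}\tilde m_{d}$ and $M_{d}(x,\cdot)=(i_{d}^{-1})_{*}\tilde M_{d}(i_{d}(x),\cdot)$ (which follow from Lemma~\ref{lem:kar22-affine-kernel} and Definition~\ref{def:kar22-admissible-Galerkin-matrix}) to get the jump compensator, produces exactly the stated characteristics $(A^{d},C^{d},\nu^{d})$. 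The diffusion vanishes because $\tilde C^{d}=0$. The main obstacle, as sketched, is the bookkeeping needed to verify $\tilde F_{d}\circ i_{d}=F_{d}$ and $i_{d}^{-1}\circ\tilde R_{d}\circ i_{d}=R_{d}$ on $\cHplus_{d}$; once these hold the rest is routine pushforward algebra.
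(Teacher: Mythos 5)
Your proposal is correct and follows essentially the same route as the paper: the paper likewise defines $X^{d}=i_{d}^{-1}(\tilde X^{d})$, identifies $\big(\phi_{d},\psi_{d}\big)$ with $\big(\tilde\phi_{d}(\cdot,i_{d}(\bP_{d}(u))),i_{d}^{-1}(\tilde\psi_{d}(\cdot,i_{d}(\bP_{d}(u))))\big)$ by showing the transported functions satisfy~\eqref{eq:kar22-Riccati-Galerkin-phi}--\eqref{eq:kar22-Riccati-Galerkin-psi} and invoking uniqueness, and then transfers conservativeness and the semimartingale characteristics by the isometric pushforward. The only cosmetic difference is that you establish the vector-field identities $\tilde F_{d}\circ i_{d}=F_{d}$ and $i_{d}^{-1}\circ\tilde R_{d}\circ i_{d}=R_{d}$ up front, whereas the paper verifies the same relations inline along the solution trajectories; the content is identical.
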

\begin{proof}
  
  Let $d\in\MN$, $x\in\cHplus$ and let $(\tilde{X}^{d}_{t})_{t\geq 0}$ be the
  unique affine process on $\MS_{d}^{+}$ associated with the parameter set
  $(0,\tilde{c}_{d},\tilde{D}_{d},0,0,\tilde{m}_{d},\tilde{M}_{d})$
  and such that $\tilde{X}^{d}_{0}=i_{d}(\bP_{d}(x))$. For $u\in\cHplus$ we have 
  \begin{align*}
    \EXspec{\MP_{x}^{d}}{\E^{-\langle \bP_{d}(u), X_{t}^{d}\rangle}}&=\EXspec{\MP_{x}^{d}}{\E^{-\langle
                                                              \bP_{d}(u),i_{d}^{-1}(\tilde{X}_{t}^{d})\rangle}}\\
                                                            &=\EXspec{\tilde{\MP}_{i_{d}(\bP_{d}(x))}^{d}}{\E^{-\langle i_{d}\bP_{d}(u),\tilde{X}_{t}^{d}\rangle_{d}}}\\
                                                            &=\E^{-\tilde{\phi}_{d}(t,i_{d}(\bP_{d}(u)))-\langle
                                                              i_{d}\bP_{d}(x),\tilde{\psi}_{d}(t,i_{d}(\bP_{d}(u)))\rangle_{d}}\\
                                                            &=\E^{-\tilde{\phi}_{d}(t,i_{d}(\bP_{d}(u)))-\langle
                                                              \bP_{d}(x)
                                                              ,i_{d}^{-1}\tilde{\psi}_{d}(t,i_{d}(\bP(u)))\rangle}. 
  \end{align*}
  This proves that the process $X^{d}_{t}$ satisfies the affine transform formula with
  functions $\tilde{\phi}_{d}(t,i_{d}(\bP_{d}(u)))$ and
  $i_{d}^{-1}(\tilde{\psi}_{d}(t,i_{d}(\bP_{d}(u)))$. Therefore, in order to
  prove~\eqref{eq:kar22-affine-Galerkin}, it is left to show that
  $\big(\phi_{d}(\cdot,\bP_{d}(u)),\psi_{d}(\cdot,\bP_{d}(u))\big)$, the unique
  solution
  of~\eqref{eq:kar22-Riccati-Galerkin-phi}-\eqref{eq:kar22-Riccati-Galerkin-psi}
  coincides with the function
  $\big(\tilde{\phi}_{d}(\cdot,i_{d}(\bP_{d}(u))),i_{d}^{-1}(\tilde{\psi}_{d}(\cdot,i_{d}(\bP_{d}(u)))\big)$. For
  this, let us again consider $K\colon\cH\times\cH\to \MR$ given by $K(u,v)\df\E^{-\langle
    u,v\rangle}-1+\langle \chi(u),v\rangle$ and for every $u\in\cHplus$, we set
  $\tilde{u}_{d}\df i_{d}(\bP_{d}(u))$, then we see that for all $t\geq 0$ and
  $u\in\cHplus$ the function
  $i_{d}^{-1}(\tilde{\psi}_{d}(\cdot,i_{d}(\bP_{d}(u))))$ satisfies the
  following equation: 
  \begin{align*}
    \frac{\partial \,i_{d}^{-1}(\tilde{\psi}_{d}(t,\tilde{u}_{d}))}{\partial
    t}&= i_{d}^{-1}(\tilde{R}_{d}(\tilde{\psi}_{d}(t,\tilde{u}_{d})))\\
    &=
       i_{d}^{-1}(\tilde{B}_d^{*}(\tilde{\psi}_{d}(t,\tilde{u}_{d})))-\int_{\MS_{d}^{+}\setminus\set{0}}K\big(\xi,
      \tilde{\psi}_{d}(t,i_{d}(\bP_{d}(u)))\big)i_{d}^{-1}(\tilde{M}_{d}(\D\xi)) \\
      &=B_{d}^{*}(i_{d}^{-1}(\tilde{\psi}(t,\tilde{u}_{d}))-\int_{\cHplus_{d}\setminus\set{0}}K\big(
                                           \xi, i_{d}^{-1}(\psi_{d}(t, \tilde{u}_{d}))\big)\,M_{d}(\D\xi)),
  \end{align*}
  and
  $i_{d}^{-1}(\tilde{\psi}_{d}(0,\tilde{u}_{d}))=\tilde{u}_{d}=i_{d}^{-1}(i_{d}\bP_{d}(u))=\bP_{d}(u)$. But
  since~\eqref{eq:kar22-Riccati-Galerkin-psi} is uniquely solved by
  $\psi_{d}(\cdot,\bP_{d}(u))$ we conclude that
  $\psi_{d}(\cdot,\bP_{d}(u))=i_{d}^{-1}(\tilde{\psi}_{d}(\cdot,i_{d}(\bP_{d}(u))))$. Similarly,
  for $\tilde{\phi}_{d}(\cdot,\tilde{u}_{d})$ we find 
  \begin{align*}
    \frac{\partial \tilde{\phi}_{d}(t,\tilde{u}_{d})}{\partial t}&= \tilde{F}_{d}(\tilde{\psi}_{d}(t,\tilde{u}_{d}))\\                                                                        
                                                                     &=\langle \tilde{b}_{d},\tilde{\psi}_{d}(t,\tilde{u}_{d})\rangle-\int_{\MS_{d}^{+}\setminus\set{0}}K\big(\xi,\tilde{\psi}_{d}(t,\tilde{u}_{d})\big)\,\tilde{m}_{d}(\D\xi)\\
                                         &=\langle
                                           b_{d},i_{d}^{-1}\tilde{\psi}_{d}(t,\tilde{u}_{d})\rangle-\int_{\cHplus_{d}\setminus\set{0}}K\big(\xi,i_{d}^{-1}\tilde{\psi}_{d}(t,\tilde{u}_{d})\big)\,m_{d}(\D\xi), 
  \end{align*}
  and $\tilde{\phi}_{d}(0,\tilde{u}_{d})=0$. Again by the uniqueness of the
  solution to~\eqref{eq:kar22-Riccati-Galerkin-phi} we conclude that
  $\phi_{d}(\cdot,\bP_{d}(u))=\tilde{\phi}_{d}(\cdot,i_{d}(\bP_{d}(u)))$,
  which finally proves~\eqref{eq:kar22-affine-Galerkin}. Moreover, the
  property~\eqref{eq:kar22-conservative-semi-Galerkin} follows from
  Proposition~\ref{prop:kar22-Galerkin-explicit} and 
  \begin{align*}
  \MP_{x}^{d}(\set{X_{t}^{d}\in\cHplus_{d}\colon\, t\geq
    0})=\tilde{\MP}^{d}_{i_{d}(\bP_{d}(x))}(\set{\tilde{X}_{t}^{d}\in\MS^{+}_{d}\colon\, t\geq 0})=1.  
  \end{align*}
  The asserted form of the semimartingale characteristics and the square-integrability
  follows immediately from the analogous property in the matrix-valued case
  and an application of the linear isometric transformation $i_{d}^{-1}$.
\end{proof}

With Proposition~\ref{prop:kar22-embedding-affine} we already have already
shown the
first part of~\cref{item:kar22-embedding-affine-main-1}. In the next
proposition we assert some additional properties of the process $X^{d}$. In particular, we show that
$X^{d}$ solves the martingale problem for $\cG^{d}$, from which we conclude
that the second assertion of~\cref{item:kar22-embedding-affine-main-2} holds true.

\begin{proposition}\label{prop:martingale-problem-d}
  For $d\in\MN$ and $x\in\cH_{d}$, let $X^{d}$ denote the
  affine process on $\cHplus_{d}$ with $X_{0}^{d}=\bP_{d}(x)$ given by
  Proposition~\ref{prop:kar22-embedding-affine}. Then the process $(\bar{J}^{d}_{t})_{t\geq 0}$ given by 
  \begin{align}\label{eq:kar22-decomp-martingale} 
    \bar{J}^{d}_{t}&\df
                     X_{t}^{d}\!-\!\bP_{d}(x)\!-\!\int_{0}^{t}\!\!\big(b_{d}+B_{d}(X_{s}^{d})-\!\int_{\cHplus_{d}\cap\set{\norm{\xi}>
                     1}}\!\!\xi\,(m_{d}(\D\xi)\!+\!M_{d}(X_{s}^{d},\D\xi)\big)\D s, 
  \end{align}
  is a square-integrable martingale on $\cH_{d}$. Moreover,
  we define for every function $f\in\dom(\cG^{d})\df\lin\set{\E^{-\langle\cdot, \bP_{d}(u)\rangle},\,\big\langle
    \cdot,\bP_{d}(u)\big\rangle,\,\big\langle
    \cdot,\bP_{d}(u)\big\rangle^{2}\colon u\in \cHplus}$ the operator $\cG^{d}$ as
  \begin{align}\label{eq:Gd-derivative-def}
    \cG^{d}f(x)=\langle b_{d}+B_{d}(x),f'(x)\rangle+\!\int_{\cHplus_{d}\setminus\set{0}}\!\!\big(f(x+\xi)\!-\!f(x)\!-\!\langle
    \chi(\xi),f'(x)\rangle\big)\,\nu(x,\D\xi), 
  \end{align}
  where $f'(x)$ denotes the first derivative of $f$ at $x\in\cHplus$, i.e. we
  have $(\E^{-\langle
    \cdot,\bP_{d}(u)\rangle})'=- \bP_{d}(u) \E^{-\langle \cdot,\bP_{d}(u)\rangle}$, $(\langle\cdot, \bP_{d}(u)\rangle)'= \bP_{d}(u)$ and $(\langle \cdot,\bP_{d}(u)\rangle^{2})'= 2\bP_{d}(u)\langle
  \cdot , \bP_{d}(u)\rangle$ for every $u\in\cHplus$. Then for all
  $f\in\dom(\cG^{d})$ the process
  \begin{align}\label{eq:Gd-martingale}
    \Big(f(X^{d}_{t})-f(\bP_{d}(x))-\int_{0}^{t}\cG^{d}f(X_{s})\,\D s
    \Big)_{t\geq 0}, 
  \end{align}
  is a real-valued martingale. 
\end{proposition}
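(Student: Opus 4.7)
The plan is to derive both assertions from the semimartingale representation of $X^{d}$ established in Proposition~\ref{prop:kar22-embedding-affine} together with an application of It\^o's formula. Recall that $X^{d}$ has characteristics $(A^{d},0,\nu^{d})$ with $\nu^{d}(\D t,\D\xi)=(m_{d}(\D\xi)+M_{d}(X^{d}_{t},\D\xi))\D t$, and by \cref{item:kar22-m-matrix}, \cref{item:kar22-second-moment-mu-d} and Corollary~\ref{coro:kar22-projected-finited-variation} the measures $m_{d}$ and $M_{d}(x,\,\cdot\,)$ admit finite first moment on $\set{\norm{\xi}>1}$ and finite second moment on $\cHplus_{d}\setminus\set{0}$, uniformly for $x$ in bounded subsets of $\cHplus_{d}$; these integrability bounds drive the whole argument.

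\textbf{Step 1 (identification of $\bar{J}^{d}$).} The canonical semimartingale decomposition of $X^{d}$ relative to $\chi$ reads $X^{d}_{t}=\bP_{d}(x)+A^{d}_{t}+\int_{0}^{t}\!\int\chi(\xi)(\mu^{X^{d}}-\nu^{d})(\D s,\D\xi)+\int_{0}^{t}\!\int(\xi-\chi(\xi))\mu^{X^{d}}(\D s,\D\xi)$. Subtracting the (now well-defined) large-jump compensator $\int_{0}^{t}\!\int_{\norm{\xi}>1}\xi\,\nu^{d}(\D s,\D\xi)$ from the drift and merging the two jump integrals yields the single compensated integral $\bar{J}^{d}_{t}\df\int_{0}^{t}\!\int\xi\,(\mu^{X^{d}}-\nu^{d})(\D s,\D\xi)$, which is exactly the expression in~\eqref{eq:kar22-decomp-martingale}. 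Square-integrability follows from $\EXspec{\MP_{x}^{d}}{\int_{0}^{t}\!\int\norm{\xi}^{2}\nu^{d}(\D s,\D\xi)}<\infty$, i.e. from the second-moment bounds above combined with the square-integrability of $X^{d}$ established in Proposition~\ref{prop:kar22-Galerkin-explicit}.

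\textbf{Step 2 (martingale problem via It\^o).} Every $f\in\dom(\cG^{d})$ is smooth on $\cH$, so It\^o's formula for semimartingales applies. Since $C^{d}=0$ only the drift along $f'(X^{d}_{s-})$ and the jump sum $\sum_{s\leq t}(f(X^{d}_{s})-f(X^{d}_{s-}))$ survive. Rewriting the jump sum as an integral against $\mu^{X^{d}}$, centering the integrand $f(X^{d}_{s-}+\xi)-f(X^{d}_{s-})-\langle\chi(\xi),f'(X^{d}_{s-})\rangle$ against $\nu^{d}$, and transferring the $\langle\chi(\xi),f'(X^{d}_{s-})\rangle\,\nu^{d}$-term into the drift produces the identity $f(X^{d}_{t})-f(\bP_{d}(x))=\int_{0}^{t}\cG^{d}f(X^{d}_{s})\,\D s+N^{f}_{t}$, where $N^{f}_{t}$ is the resulting compensated jump integral and hence a local martingale.

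\textbf{Step 3 (true martingale via Taylor bounds).} To upgrade $N^{f}$ to a genuine martingale I would bound the integrand via Taylor's theorem on each of the three generating functions: for $f=\E^{-\langle\cdot,\bP_{d}(u)\rangle}$ boundedness yields a $\norm{\xi}^{2}$-bound on $\set{\norm{\xi}\leq 1}$ and a uniform bound on $\set{\norm{\xi}>1}$; for $f=\langle\cdot,\bP_{d}(u)\rangle$ the Taylor remainder vanishes for small $\xi$ and equals $\langle\xi,\bP_{d}(u)\rangle$ for large $\xi$; and for $f=\langle\cdot,\bP_{d}(u)\rangle^{2}$ one obtains a $\norm{\xi}^{2}$-bound plus a mixed term of the form $2\langle X^{d}_{s-},\bP_{d}(u)\rangle\langle\xi-\chi(\xi),\bP_{d}(u)\rangle$. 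The main obstacle is precisely this mixed term in the quadratic case, whose integrand grows linearly in $X^{d}_{s-}$; it is tamed by Cauchy--Schwarz together with $\int\norm{\xi}^{2}M_{d}(x,\D\xi)=\langle x,\int\norm{\xi}^{2}M_{d}(\D\xi)\rangle\leq C\norm{x}$, which follows from the form of $M_{d}(x,\D\xi)$ in Lemma~\ref{lem:kar22-affine-kernel}, and from the square-integrability of $X^{d}$ established in Step~1.
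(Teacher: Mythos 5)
Your proof is correct, but it takes a genuinely different route from the paper. The paper's own proof is essentially a reduction: it extends the projected parameters $(b_{d},B_{d},m_{d},\mu_{d})$ from $\cH_{d}$ to all of $\cH$, checks that the extension is again an admissible parameter set in the sense of Definition~\ref{def:kar22-admissible}, and then invokes two external results from \cite{CKK22b} (their Propositions~2.4 and~2.5 on the semimartingale representation and on the weak generator of affine processes on $\cHplus$) to obtain both \eqref{eq:kar22-decomp-martingale} and \eqref{eq:Gd-martingale} in one stroke; the only computation carried out in the paper is the evaluation of $\cG^{d}$ on $\E^{-\langle\cdot,u\rangle}$ to reconcile \eqref{eq:Gd-derivative-def} with \eqref{eq:kar22-G-d-operator}. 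You instead argue directly from the characteristics $(A^{d},0,\nu^{d})$ supplied by Proposition~\ref{prop:kar22-embedding-affine}: canonical decomposition and merging of the jump integrals for \eqref{eq:kar22-decomp-martingale}, then It\^o's formula plus a compensation argument for \eqref{eq:Gd-martingale}, with the local-to-true-martingale upgrade handled by explicit Taylor and moment bounds. Your route is self-contained modulo standard semimartingale calculus and makes visible exactly which integrability conditions are being used (in particular you correctly isolate the mixed term $2\langle X^{d}_{s-},\bP_{d}(u)\rangle\langle\xi-\chi(\xi),\bP_{d}(u)\rangle$ as the only delicate point for the quadratic test functions, and the bound $\int\norm{\xi}^{2}M_{d}(x,\D\xi)\leq\langle x,\mu(\cHpluso)\rangle$ together with Lemma~\ref{lem:kar22-tightness-square-bound}-type integrability of $\norm{X^{d}}$ does close it); the paper's route is shorter but opaque without \cite{CKK22b} at hand. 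One small caveat: your Step~1 algebra actually produces the large-jump compensator with a plus sign in the drift, i.e. $b_{d}+B_{d}(X^{d}_{s})+\int_{\norm{\xi}>1}\xi\,(m_{d}+M_{d}(X^{d}_{s},\cdot))(\D\xi)$, consistent with the representation of $X$ in Proposition~\ref{prop:kar22-main-convergence-3}, whereas \eqref{eq:kar22-decomp-martingale} as printed carries a minus sign; this appears to be a typo in the statement rather than an error in your derivation, but you should not claim literal agreement with the displayed formula without flagging it.
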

\begin{proof}
  Note first that we can extend the operator $B_{d}$ to $\cH$ by setting $B_{d}(u)\df B_{d}(\bP_{d}(u))$ for
  $u\in\cHplus$ and the measures $m_{d}$ and $\mu_{d}$ to $\cB(\cHpluso)$ by setting $m_{d}(A)=m_{d}(A\cap
  (\cHplus_{d}\setminus\set{0}))$ for $A\in\cB(\cHpluso)$ and analogously for
  $\mu_{d}$. We denote the extended generators again by $B_{d}$, $m_{d}$ and
  $\mu_{d}$ and note that $(b_{d},B_{d},m_{d},\mu_{d})$ satisfies the
  conditions in Definition~\ref{def:kar22-admissible}. The
  representation~\eqref{eq:kar22-decomp-martingale} thus follows
  from~\cite[Proposition 2.4]{CKK22b}. Moreover, we see that the operator
  $\cG^{d}$ defined in~\eqref{eq:Gd-derivative-def} on $\dom(\cG^{d})$
  coincides with the \emph{weak generator} as introduced in~\cite[Definition
  2.1]{CKK22b} and that the processes in~\eqref{eq:Gd-martingale} are
  real-valued martingales for all $f\in\dom(\cG^{d})$ thus follows
  from~\cite[Proposition 2.5]{CKK22b}. Note, in particular that $\cG^{d}$
  applied to $\E^{-\langle \cdot,u\rangle}$ evaluated at $x\in\cHplus$ can be
  computed as
  \begin{align*}
    \cG^{d}\E^{-\langle \cdot,u\rangle}(x)&=\Big(-\langle
                                            b_{d}+B_{d}(x),\bP_{d}(u)\rangle\\
                                          &\qquad +\!\int_{\cHplus_{d}\setminus\set{0}}\!\!\big(
                                            \E^{-\langle \xi,\bP_{d}(u)\rangle}-
                                            1+\langle
                                            \chi(\xi),\bP_{d}(u)\rangle\big)\,\nu(x,\D\xi)\Big)\E^{-\langle x,\bP_{d}(u)\rangle}\\
                                          &=(-F_{d}(u)-\langle
                                            x,R_{d}(u)\rangle)\E^{-\langle
                                            x,\bP_{d}(u)\rangle},   
  \end{align*}
  and we see that $\cG^{d}$ defined in~\eqref{eq:Gd-derivative-def}
  coincides with $\cG^{d}$ in~\eqref{eq:kar22-G-d-operator}, which also explains our notation.
\end{proof}

\section{Tightness and weak convergence of finite-rank affine
  processes}\label{sec:kar22-tightness-weak-convergence}

Let $(b,B,m,\mu)$ be an admissible parameter set and for every $d\in\MN$ let
$X^{d}$ denote the associated affine finite-rank operator-valued process given
by Proposition~\ref{prop:kar22-embedding-affine}. In this section we study
the tightness and weak-convergence of the sequence $(X^{d})_{d\in\MN}$ on the
space $D(\MRplus,\cHplus)$ equipped with the Skorohod topology. More
precisely, for every $x\in\cHplus$ we consider the sequence
$(\MP_{x}^{d})_{d\in\MN}$ of laws of $X^{d}$, given that
$X_{0}^{d}=\bP_{d}(x)$, defined on the Borel-$\sigma$-algebra
$\cB(D(\MRplus,\cHplus))$, and study its weak convergence as $d\to\infty$. For
this, we shall first prove that the sequence of laws $(\MP_{x}^{d})_{d\in\MN}$ is
tight on $\cB(D(\MRplus,\cHplus))$, whenever
Assumption~\ref{assump:kar22-compact-embedding} is satisfied. This we prove in
Section~\ref{sec:kar22-tightn-finite-rank}. Subsequently, in
Section~\ref{sec:kar22-weak-conv-finite}, we prove weak convergence of
$(\MP_{x}^{d})_{d\in\MN}$ to a unique probability measure $\MP_{x}$ on
$\cB(D(\MRplus,\cHplus))$, the canonical process of which turns out to be the
desired affine process $(X,\MP_{x})$ on $\cHplus$ and we prove the remaining assertions
of Theorem~\ref{thm:kar22-main-convergence}.  

\subsection{Tightness}\label{sec:kar22-tightn-finite-rank}
To prove the tightness of the sequence $(\MP_{x}^{d})_{d\in\MN}$ we use the
\emph{Aldous criterion} in~\cite[Theorem 2.2.2]{JM86}, which we shall recall
in the beginning of the proof of Proposition~\ref{prop:kar22-tightness}
below. We first need the following lemma:  

\begin{lemma}\label{lem:kar22-tightness-square-bound}
  Let $x\in\cHplus$, $T>0$ and for every $d\in\MN$ denote by
  $(\bar{J}_{t}^{d})_{t\geq 0}$ the square-integrable martingale given
  by~\eqref{eq:kar22-decomp-martingale}. Then there exists a constant
  $K_{T}\geq 0$ such that the following inequalities hold true: 
  \begin{align}
    \EXspec{\MP_{x}^{d}}{\sup_{0\leq t\leq T}\norm{X_{t}^{d}}^{2}}&\leq K_{T}(1+\norm{x}^{2}),\label{eq:kar22-tightness-square-bound-Xd}\\
    \EXspec{\MP_{x}^{d}}{\sup_{0\leq t\leq T}\norm{ \bar{J}_{t}^{d}}^{2}}&\leq K_{T}(1+\norm{x}^{2}).\label{eq:kar22-tightness-square-bound-Md} 
  \end{align}
  Moreover, $K_{T}$ can be chosen independently of $d\in\MN$.
\end{lemma}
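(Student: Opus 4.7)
The plan is to combine the semimartingale representation~\eqref{eq:kar22-decomp-martingale} with Doob's $L^{2}$-maximal inequality for the $\cH$-valued martingale $\bar{J}^{d}$ and with a Gronwall argument for the map $t\mapsto\EXspec{\MP_{x}^{d}}{\sup_{s\leq t}\norm{X_{s}^{d}}^{2}}$. The crux will be to verify that the resulting constants do not depend on $d\in\MN$, which hinges on the non-expansiveness of the projections $\bP_{d}$ and on the compensating role of the drift term in~\eqref{eq:kar22-bd}.

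First I would collect $d$-independent bounds on the projected parameters $(b_{d},B_{d},m_{d},M_{d})$ from Definition~\ref{def:kar22-admissible-Galerkin}. Since $\bP_{d}$ is an orthogonal projection on $\cH$, the change-of-variable formula for push-forward measures together with \cref{item:kar22-m-2moment} and~\eqref{eq:kar22-existence-integral} yields
\begin{align*}
\int_{\cHplus_{d}\setminus\set{0}}\norm{\xi}^{2}\,m_{d}(\D\xi)&\leq \int_{\cHpluso}\norm{\xi}^{2}\,m(\D\xi),\\
\int_{\cHplus_{d}\setminus\set{0}}\norm{\xi}^{2}\langle x,M_{d}(\D\xi)\rangle&\leq \norm{x}\,\norm{\mu(\cHpluso)},
\end{align*}
as well as $\norm{b_{d}}\leq c_{1}$ and $\norm{B_{d}}_{\cL(\cH)}\leq\norm{B}_{\cL(\cH)}+c_{2}$ for constants $c_{1},c_{2}$ depending only on $(b,B,m,\mu)$. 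Using $\norm{\xi}\leq\norm{\xi}^{2}$ on $\set{\norm{\xi}>1}$, these estimates also give $\norm{\int_{\norm{\xi}>1}\xi\,\nu^{d}(X_{s}^{d},\D\xi)}\leq C(1+\norm{X_{s}^{d}})$.

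Next I would tackle~\eqref{eq:kar22-tightness-square-bound-Md}. Since $\bar{J}^{d}$ is a purely discontinuous square-integrable Hilbert-space valued martingale whose jumps are compensated by $\nu^{d}(\D t,\D\xi)=(m_{d}(\D\xi)+M_{d}(X_{t}^{d},\D\xi))\D t$, the It\^o isometry (see~\cite[Proposition 2.29 b)]{JS03}) combined with the uniform estimates above gives
\begin{align*}
\EXspec{\MP_{x}^{d}}{\norm{\bar{J}_{t}^{d}}^{2}}=\EXspec{\MP_{x}^{d}}{\int_{0}^{t}\!\!\int_{\cHplus_{d}\setminus\set{0}}\!\norm{\xi}^{2}\,\nu^{d}(X_{s}^{d},\D\xi)\,\D s}\leq C\Big(t+\!\int_{0}^{t}\!\EXspec{\MP_{x}^{d}}{\norm{X_{s}^{d}}}\D s\Big),
\end{align*}
so Doob's $L^{2}$-inequality applied to $\bar{J}^{d}$ reduces~\eqref{eq:kar22-tightness-square-bound-Md} to~\eqref{eq:kar22-tightness-square-bound-Xd}.

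For~\eqref{eq:kar22-tightness-square-bound-Xd}, I would take norms in~\eqref{eq:kar22-decomp-martingale}, insert the bounds above, square, apply Cauchy--Schwarz, take expectation, and feed the martingale estimate back in. This produces an inequality of Gronwall type,
\begin{align*}
\EXspec{\MP_{x}^{d}}{\sup_{s\leq t}\norm{X_{s}^{d}}^{2}}\leq C_{T}(1+\norm{x}^{2})+C_{T}\!\int_{0}^{t}\!\EXspec{\MP_{x}^{d}}{\sup_{r\leq s}\norm{X_{r}^{d}}^{2}}\,\D s,
\end{align*}
with $C_{T}$ independent of $d$, from which Gronwall's lemma yields~\eqref{eq:kar22-tightness-square-bound-Xd} and, combined with the previous step,~\eqref{eq:kar22-tightness-square-bound-Md}. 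The main obstacle will be the uniformity in $d$ of the small- versus large-jump estimates: projecting $\mu$ and $m$ by $\bP_{d}$ can move mass between the two regions, and one has to check that the centering built into $b_{d}$ in~\eqref{eq:kar22-bd} and into $B_{d}$ in~\eqref{eq:kar22-Bd} absorbs precisely this shift, so that all constants depend only on the original admissible parameters $(b,B,m,\mu)$.
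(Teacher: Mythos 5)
Your proposal is correct and takes essentially the same route as the paper: the same decomposition $X^{d}_{t}=\bP_{d}(x)+H^{d}_{t}+\bar{J}^{d}_{t}$ with the large-jump compensator absorbed into the drift, Doob's $L^{2}$-inequality plus the identity $\big<\bar{J}^{d}\big>_{t}=\int_{0}^{t}\int\norm{\xi}^{2}\,\nu^{d}(X_{s}^{d},\D\xi)\D s$ for the martingale part, non-expansiveness of $\bP_{d}$ for the $d$-uniform parameter bounds, and a Gronwall closure. The only cosmetic difference is that the paper derives the angle-bracket identity coordinate-wise in the basis $\set{\be_{i,j}}$ via the Carr\'e-du-champs formula for $\cG^{d}$ rather than invoking the It\^o isometry directly.
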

\begin{proof}
  Let $d\in\MN$ and $(b_{d},B_{d},m_{d},\mu_{d})$ and $M_{d}$ be as in
  Definition~\ref{def:kar22-admissible-Galerkin}. Then define
  $\hat{b}_{d}\df b_{d}+\int_{\cHplus_{d}\cap\set{\norm{\xi}>1}}\xi\,m_{d}(\D\xi)$ and the
  function $\hat{B}_{d}\colon\cHplus_{d}\to\cH_{d}$ by   
  \begin{align*}
    \hat{B}_{d}(u)\df B_{d}(u)+\int_{\cHplus_{d}\cap\set{\norm{\xi}>1}}\xi\,
    \langle u, M_{d}(\D\xi)\rangle,\quad u\in\cH_{d}.   
  \end{align*}
  By Proposition~\ref{prop:martingale-problem-d} we have
  $X^{d}_{t}=\bP_{d}(x)+H_{t}^{d}+\bar{J}_{t}^{d}$ for every $t\in[0,T]$, where
  $(\bar{J}^{d}_{t})_{0\leq t\leq T}$ denotes the square-integrable martingale
  in~\eqref{eq:kar22-decomp-martingale} on $[0,T]$ and we write $(H_{t}^{d})_{0\leq t\leq T}$
  for the finite-variation process given by 
  \begin{align}\label{eq:kar22-Fd}
    H^{d}_{t}\df\int_{0}^{t}\big(\hat{b}_{d}+\hat{B}_{d}(X_{s}^{d})\big)\D
    s,\quad 0\leq t\leq T.
  \end{align}
  We therefore obtain
  \begin{align}\label{eq:kar22-tightness-square-bound-1}
    \EXspec{\MP_{x}^{d}}{\sup_{0\leq t\leq T}\norm{X_{t}^{d}}^{2}}\leq 3 \norm{\bP_{d}(x)}^{2}\!+\!3
    \EXspec{\MP_{x}^{d}}{\sup_{0\leq t\leq
    T}\norm{H_{t}^{d}}^{2}}\!+\!3\EXspec{\MP_{x}^{d}}{\sup_{0\leq t\leq T}\norm{\bar{J}^{d}_{t}}^{2}}.
  \end{align}
  Inserting~\eqref{eq:kar22-Fd} into the second term on the right-hand side
  of~\eqref{eq:kar22-tightness-square-bound-1} yields
  \begin{align}\label{eq:kar22-tightness-square-bound-2}
    \EXspec{\MP_{x}^{d}}{\sup_{0\leq t\leq T}\norm{H_{t}^{d}}^{2}}&\leq 2 T^{2}\norm{\hat{b}_{d}}^{2}+
                                                              2\norm{\hat{B}_{d}}^{2}_{\cL(\cH_{d})}\int_{0}^{T}\EXspec{\MP_{x}^{d}}{\norm{X_{s}^{d}}^{2}}\D
                                                              s\nonumber \\
                                                            &\leq 2 T^{2}\norm{\hat{b}}^{2}+
                                                              2\norm{\hat{B}}^{2}_{\cL(\cH)}\int_{0}^{T}\EXspec{\MP_{x}^{d}}{\norm{X_{s}^{d}}^{2}}\D
                                                              s,
  \end{align}
 where the latter inequality for $\hat{b}\df
 b+\int_{\cHplus\cap\set{\norm{\xi}>1}}\xi \,m(\D\xi)$ and linear function $\hat{B}(\cdot)\df
 B(\cdot)+\int_{\cHplus\cap\set{\norm{\xi}\geq 1}}\xi\,\langle
 \,\cdot\,,M(\D\xi)\rangle$ holds by Remark~\ref{rem:kar22-existence-integral}.
 For the second term in~\eqref{eq:kar22-tightness-square-bound-1}, we recall from \cite[Theorem 20.6]{Met82} that  
 \begin{align}\label{eq:kar22-doob-ineqaulity}
   \EXspec{\MP_{x}^{d}}{\sup_{0\leq t\leq T}\norm{\bar{J}^{d}_{t}}^{2}}\leq 4 \EXspec{\MP_{x}^{d}}{\big< \bar{J}^{d}\big>_{T}},
 \end{align}
 where we denote by $\big(\big< \bar{J}^{d} \big>_{t}\big)_{0\leq t\leq T}$ the angle-bracket process of the
 square-integrable martingale $(\bar{J}^{d}_{t})_{0\leq t\leq T}$. Now, let
 $(\be_{i,j})_{i\leq j\in\MN}$ be the same orthonormal basis of $\cH$ that we
 used throughout this section. For $i\leq j\in\MN$ we set
 $\bar{J}_{t}^{(i,j),d}\df \langle \bar{J}^{d}_{t},\be_{i,j}\rangle$ and denote by
 $\big(\big<\bar{J}^{(i,j),d}\big>_{t}\big)_{t\geq 0}$ the unique real-valued increasing process
 such that 
 \begin{align*}
   \big((\bar{J}^{(i,j),d})^{2}-\big<\bar{J}^{(i,j),d}\big>\big)_{0\leq t\leq T}, 
 \end{align*}
 is a martingale. Moreover, as in \cite[Section 20]{Met82}, we denote by $\big(\big<
 \bar{J}^{d}\big>_{t}\big)_{0\leq t\leq T}$ the unique predictable and increasing
 process such that $\big(\norm{\bar{J}^{d}}^{2}-\big<\bar{J}^{d}\big>\big)_{0\leq t\leq T}$ is a
 martingale. Note that $\big<\bar{J}^{d}\big>_{t}=\sum_{i\leq
   j}^{d}\big<\bar{J}^{(i,j),d}\big>_{t}$ for every $0\leq t\leq T$ and it is thus
 left to compute the form of the processes
 $\big(\big<\bar{J}^{(i,j),d}\big>_{t}\big)_{0\leq t\leq T}$ for $1\leq i\leq
 j\leq d$. By an application of the \emph{Carr\'e-du-champs formula}, see
 e.g.~\cite[Lemma 3.1.3]{JM86}, we see that 
 \begin{align}
   \big<\bar{J}^{(i,j),d}\big>_{t}&=\int_{0}^{t}\cG^{d}\langle
                              X_{s}^{d},\be_{i,j}\rangle^{2}-2\langle
                              X_{s}^{d},\be_{i,j}\rangle\cG^{d}\langle
                              X_{s}^{d},\be_{i,j}\rangle \D s,\quad 0\leq
                              t\leq T,\label{eq:kar22-carre-du-champs}
 \end{align}
 where $\cG^{d}$ is the operator in~\eqref{eq:Gd-derivative-def}, where by
 linearity we extend $\cG^{d}$ to the set $\lin(\set{\langle
   \cdot,\bP_{d}(u)\rangle,\langle \cdot,\bP_{d}(u)\rangle^{2}\colon
   u\in\cH})$, see also~\cite[Lemma 3.9 and Proposition 4.17]{CKK22a}, and we
 thus obtain
  \begin{align}
    \cG^{d}\langle x,\be_{i,j}\rangle &=\langle
                                        b_{d}\!+\!B_{d}(x),\be_{i,j}\rangle+\!\int_{\cHplus\cap\set{\norm{\xi}>1}}\!\langle
                                        \xi,\be_{i,j}\rangle\,\big(m_{d}(\D\xi)\!+\!\langle
                                        x,M_{d}(\D\xi)\rangle\big),\label{eq:kar22-G-d-operators-1}\\
      \cG^{d}\langle x,\be_{i,j}\rangle^{2}&=\!\int_{\cHpluso}\!\langle
                                             \xi,\be_{i,j}\rangle^{2}\,\big(m_{d}(\D\xi)\!+\!\langle
                                             x,
                                             M_{d}(\D\xi)\rangle\big)+2\langle
                                             x,\be_{i,j}\rangle \cG^{d}\langle
                                             x,\be_{i,j}\rangle.\label{eq:kar22-G-d-operators-2}    
  \end{align}
 Inserting~\eqref{eq:kar22-G-d-operators-1}
 and~\eqref{eq:kar22-G-d-operators-2} into~\eqref{eq:kar22-carre-du-champs}
 yields  
 \begin{align*}
   \big< \bar{J}^{(i,j),d}\big>_{t}&=\int_{0}^{t}\int_{\cHpluso}\langle \xi,\be_{i,j}\rangle^{2}\,\big(m_{d}(\D\xi)+\langle
                               X_{s}^{d}, M_{d}(\D\xi)\rangle\big)\,\D s,\quad
                                     t\in[0,T].
 \end{align*}
Now, since we have
\begin{align*}
  \sum_{i\leq j}^{d}\Big(\int_{\cHplus_{d}\setminus\set{0}}\langle \xi,\be_{i,j}\rangle^{2}\,m_{d}(\D\xi)\Big)&=\int_{\cHplus_{d}\setminus\set{0}}\norm{\xi}^{2}\,m_{d}(\D\xi)\leq \int_{\cHpluso}\norm{\xi}^{2}\,m(\D\xi),  
\end{align*}
and moreover for every $s\in [0,t]$
\begin{align*}
  \sum_{i\leq j}^{d}\Big(\int_{\cHplus_{d}\setminus\set{0}}\langle \xi,
  \be_{i,j}\rangle^{2}\langle X_{s}^{d},
  M_{d}(\D\xi)\rangle\Big)&=\int_{\cHplus_{d}\setminus\set{0}}\norm{\xi}^{2}\langle
                            X_{s}^{d}, M_{d}(\D\xi)\rangle\\
                          &= \int_{\cHpluso}\frac{\norm{\bP_{d}(\xi)}^{2}}{\norm{\xi}^{2}}\langle
                         X_{s}^{d}, \bP_{d}(\mu(\D\xi))\rangle\\
                          &\leq \langle X_{s}^{d}, \mu(\cHpluso)\rangle,
\end{align*}
we conclude that for every $d\in\MN$ and $0\leq t\leq T$ the following
inequality holds
\begin{align*}
  \big< \bar{J}^{d}\big>_{t}&=\sum_{i\leq
                        j}^{d}\big<\bar{J}^{(i,j),d}\big>_{t}\leq \int_{0}^{t}\Big(\int_{\cHpluso}\norm{\xi}^{2}\,m(\D\xi)+\langle
                        X_{s}^{d},\mu(\cHpluso)\rangle \Big)\D s.
\end{align*}
From this it follows that
\begin{align}
  \EXspec{\MP_{x}^{d}}{\big<\bar{J}^{d}\big>_{T}}&\leq T\int_{\cHpluso}\!\norm{\xi}^{2}\,m(\D\xi)+
                                       \norm{\mu(\cHpluso)}\Big(\int_{0}^{T}\!\EXspec{\MP_{x}^{d}}{\norm{X_{s}^{d}}}\D s\Big) \nonumber\\
                                     &\leq T\int_{\cHpluso}\!\norm{\xi}^{2}\,m(\D\xi)+
                                       \norm{\mu(\cHpluso)}\Big(\int_{0}^{T}\!\EXspec{\MP_{x}^{d}}{1+\norm{X_{s}^{d}}^{2}}\D
                                       s\Big),\label{eq:kar22-jump-part-doob} 
\end{align}
and hence inserting~~\eqref{eq:kar22-jump-part-doob}
and~\eqref{eq:kar22-tightness-square-bound-2} back into~\eqref{eq:kar22-tightness-square-bound-1} gives
\begin{align*}
  \EXspec{\MP_{x}^{d}}{\sup_{0\leq t\leq T}\norm{X_{t}^{d}}^{2}}&\leq \norm{\bP_{d}(x)}^{2}\!+\!6\big(\norm{\hat{B}}_{\cL(\cH)}\!+\!12\norm{\mu(\cHplus\!\setminus\!\set{0})}\big)\!\int_{0}^{T}\!\!\EXspec{\MP_{x}^{d}}{\norm{X_{s}^{d}}^{2}}\D
                                                              s\\
                                                          &\quad+6T^{2}\norm{\hat{b}}^{2}+12T\big(\int_{\cHpluso}\norm{\xi}^{2}\,m(\D\xi)+T\norm{\mu(\cHpluso)}\big).
\end{align*}
Therefore setting $K_{1,T}=6T^{2}\norm{\hat{b}}^{2}+12T\big(\int_{\cHpluso}\norm{\xi}^{2}\,m(\D\xi)+T\mu(\cHpluso)\big)$
  and $K_{2}=6\big(\norm{\hat{B}}_{\cL(\cH)}+12\mu(\cHpluso)\big)$ (where we
  note that $K_{1,T}$ and $K_{2}$ do not depend on $d\in\MN$) and by applying
  Gronwall's inequality we find that   
  \begin{align*}
    \EXspec{\MP_{x}^{d}}{\sup_{0\leq t\leq T}\norm{X_{t}^{d}}^{2}}&\leq
                                                              \E^{K_{2}T}(K_{1,T}+\norm{\bP_{d}(x)}^{2})\leq
                                                              \tilde{K}_{1,T}(1+\norm{x}^{2}),  
  \end{align*}
  for some $\tilde{K}_{1,T}$, independent of $d\in\MN$, which proves
  inequality~\eqref{eq:kar22-tightness-square-bound-Xd}. Inserting, this back
  into~\eqref{eq:kar22-jump-part-doob}
  yields~\eqref{eq:kar22-tightness-square-bound-Md} for a suitable
  $\tilde{K}_{2,T}$ and choosing $K_{T}=\max(\tilde{K}_{1,T},\tilde{K}_{2,T})$
  proves the assertion.
\end{proof}

Recall the Hilbert space $(\cV,\langle \cdot,\cdot\rangle_{\cV})$
from~\eqref{eq:kar22-intersection-HS-space} and let
Assumption~\ref{assump:kar22-compact-embedding} be satisfied. Then the
embedding of $(\cV,\langle \cdot,\cdot\rangle_{\cV})$ into
$(\cL_{2}(H),\langle\cdot,\cdot\rangle)$ turns out to be compact as well,
i.e. $\cV\subset\!\subset \cL_{2}(H)$, see \cite[Proposition
2.1]{Tem71}. Moreover, we note that $\cH_{d}\subseteq \cV_{0}\cap \cH$ for all
$d\in\MN$, see~\cite{Ros91}. In the next proposition we prove that in
this setting the sequence $(\MP^{d}_{x})_{d\in\MN}$ is tight. 

\begin{proposition}\label{prop:kar22-tightness}
  Let Assumption~\ref{assump:kar22-compact-embedding} be satisfied. Then for
  every $x\in\cHplus$ the sequence $(\MP^{d}_{x})_{d\in\MN}$ of laws of
  $(X^{d})_{d\in\MN}$ is a tight sequence of measures on
  $\cB(\!D(\MRplus,\cHplus)\!)$.  
\end{proposition}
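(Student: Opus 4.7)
The plan is to verify the two hypotheses of Aldous' tightness criterion for probability measures on $D(\MRplus,\cHplus)$ as stated in~\cite[Theorem 2.3.2]{JM86}: a compact containment condition and a modulus of continuity condition on stopping times. I fix $x\in\cHplus$ and $T>0$ throughout.

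For the modulus of continuity, let $(\tau_d)_{d\in\MN}$ be $\bar{\MF}^d$-stopping times bounded by $T$ and let $\delta_d\downarrow 0$. Using the semimartingale decomposition $X^d_t=\bP_d(x)+H^d_t+\bar{J}^d_t$ from Proposition~\ref{prop:martingale-problem-d}, I split the increment into a drift part and a martingale part. The drift is Lipschitz in time with a random Lipschitz constant bounded by $\|\hat{b}\|+\|\hat{B}\|_{\cL(\cH)}\sup_{t\le T}\|X^d_t\|$, which is $L^{2}(\MP^d_x)$-bounded uniformly in $d$ by Lemma~\ref{lem:kar22-tightness-square-bound}; hence $\|H^d_{\tau_d+\delta_d}-H^d_{\tau_d}\|=O(\delta_d)$ in $L^{2}$. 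For the martingale part, optional sampling combined with Doob's inequality and the angle-bracket computation from that lemma gives
\begin{align*}
\EXspec{\MP^d_x}{\|\bar{J}^d_{\tau_d+\delta_d}-\bar{J}^d_{\tau_d}\|^{2}}\le 4\,\EXspec{\MP^d_x}{\langle\bar{J}^d\rangle_{\tau_d+\delta_d}-\langle\bar{J}^d\rangle_{\tau_d}}=O(\delta_d),
\end{align*}
uniformly in $d$, and Markov's inequality then delivers the required estimate.

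For the compact containment, I use the standard fact that a bounded subset $K\subset\cH$ is relatively compact iff $\sup_{y\in K}\|\bP^\perp_N y\|\to 0$ as $N\to\infty$. The uniform $\cH$-bound is immediate from Lemma~\ref{lem:kar22-tightness-square-bound} and Markov's inequality, so the crux is to establish the uniform tail estimate
\begin{align*}
\lim_{N\to\infty}\sup_{d\in\MN}\MP^d_x\Big(\sup_{t\le T}\|\bP^\perp_N X^d_t\|>\varepsilon\Big)=0,\quad\varepsilon>0.
\end{align*}
For $d\le N$ the event is empty since $X^d_t\in\cH_d\subseteq\cH_N$. For $d>N$ I apply $\bP^\perp_N$ to the semimartingale decomposition. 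The initial term $\bP^\perp_N\bP_d(x)=\bP_d\bP^\perp_N(x)$ (the projections onto nested coordinate subspaces commute) has norm at most $\|\bP^\perp_N x\|\to 0$. Repeating the angle-bracket calculation from the proof of Lemma~\ref{lem:kar22-tightness-square-bound} for the projected martingale $\bP^\perp_N\bar{J}^d$ yields a bound of the form $\int_0^T\!\int_{\cHpluso}\|\bP^\perp_N\bP_d\xi\|^{2}\bigl(m(\D\xi)+\|\xi\|^{-2}\langle X^d_s,\mu(\D\xi)\rangle\bigr)\D s$, whose expectation tends to 0 uniformly in $d$ by dominated convergence, since the dominating function $\|\xi\|^{2}$ is integrable against $m$ and against $M(x,\D\xi)$ and the second-moment bound of Lemma~\ref{lem:kar22-tightness-square-bound} handles the $X^d_s$ factor. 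The drift contribution is treated analogously by commuting $\bP^\perp_N$ past $\bP_d$ in the explicit expressions for $\hat{b}_d$ and $\hat{B}_d(y)$; here the compact embedding $\cV\subset\!\subset\cL_{2}(H)$ from Assumption~\ref{assump:kar22-compact-embedding} is invoked via $\|\bP^\perp_N\|_{\cL(\cV,\cH)}\to 0$ to control the deterministic pieces uniformly in $d$.

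The principal obstacle is the compact containment: the moment bound from Lemma~\ref{lem:kar22-tightness-square-bound} only yields $\cH$-boundedness, whereas compactness in the infinite-dimensional Hilbert space $\cH$ demands a quantitative uniform tail estimate. The compact embedding hypothesis is precisely the ingredient that upgrades boundedness to this tail control, and the delicacy lies in showing that the linear drift coefficient $\hat{B}_d$ and the jump compensators admit such uniform tail estimates despite both being indexed by the rank $d$.
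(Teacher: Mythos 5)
Your verification of the Aldous modulus-of-continuity condition is essentially the paper's argument: the same decomposition $X^{d}=\bP_{d}(x)+H^{d}_{t}+\bar{J}^{d}_{t}$, the same uniform moment bounds from Lemma~\ref{lem:kar22-tightness-square-bound}, Doob's inequality and the angle-bracket computation for the martingale part, and Markov's inequality at the end. The divergence is in the compact containment condition. The paper only needs tightness of the one-dimensional marginals of $(X_{t}^{d})_{d\in\MN}$ for each fixed $t$, and obtains it structurally: $\MP_{x}^{d}(X_{t}^{d}\in\cHplus_{d})=1$ and $\cHplus_{d}\subseteq\cV_{0}\cap\cHplus$, so all the laws are supported on a single set whose compactness in $\cHplus$ is deduced from the compact embedding $\cV\subset\!\subset\cL_{2}(H)$ --- no quantitative estimate on the process is required. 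You instead aim for the strictly stronger pathwise tail bound $\sup_{d}\MP_{x}^{d}\big(\sup_{t\le T}\norm{\bP_{N}^{\perp}X_{t}^{d}}>\varepsilon\big)\to 0$ by projecting the semimartingale decomposition.

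That route has a genuine gap at the linear drift term. The constant drift and both jump contributions are indeed controllable uniformly in $d$: dominated convergence against $\norm{\xi}^{2}\,m(\D\xi)$ and against the finite $\cHplus$-valued measure $\mu$ gives $\int\norm{\bP_{N}^{\perp}\xi}^{2}\,m(\D\xi)\to 0$ and $\norm{\int\norm{\bP_{N}^{\perp}\xi}^{2}\norm{\xi}^{-2}\mu(\D\xi)}\to 0$, and the second-moment bound on $X^{d}$ handles the state-dependence. But the term $\bP_{N}^{\perp}\bP_{d}B(X_{s}^{d})$ is not of this form: for a general $B\in\cL(\cH)$ one has $\sup_{\norm{y}\le R}\norm{\bP_{N}^{\perp}B(y)}=R\,\norm{\bP_{N}^{\perp}B}_{\cL(\cH)}$, which tends to zero as $N\to\infty$ only if $B$ is a \emph{compact} operator. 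Your appeal to $\norm{\bP_{N}^{\perp}}_{\cL(\cV,\cH)}\to 0$ would require $B$ to map $\cH$ boundedly into $\cV$; however, Assumption~\ref{assump:kar22-compact-embedding} imposes nothing on $B$, and the hypotheses $B^{*}(\cV_{0})\subseteq\cV_{0}$ and $\norm{\mu(\cHpluso)}_{\cV}<\infty$ enter only in the second half of Theorem~\ref{thm:kar22-main-convergence}, for the convergence rate, not for tightness. One could try to repair this by splitting $\bP_{N}^{\perp}B=\bP_{N}^{\perp}B\bP_{M}+\bP_{N}^{\perp}B\bP_{M}^{\perp}$ (the first summand is finite-rank, hence has small tails) and running a coupled Gronwall iteration, but as written the step fails; the paper's support argument for the fixed-time marginals sidesteps the issue entirely.
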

\begin{proof}
  Let $x\in\cHplus$. As mentioned before, we use the \emph{tightness criterion
    from Aldous}, see \cite[Theorem
  2.2.2]{JM86}. For the readers convenience we recall in the following the two
  sufficient conditions implying the tightness of $(\MP^{d}_{x})_{d\in\MN}$:
  \begin{enumerate}
  \item[i)] For every $t\geq 0$ the sequence of laws of
    $(X_{t}^{d})_{d\in\MN}$ form a tight sequence of probability measures on
    $\cB(\cHplus)$, the Borel-$\sigma$-algebra on $\cHplus$. 
  \item[ii)] For every $T>0$, $\varepsilon>0$, $\eta>0$ there exists a
    $\delta>0$ 
    and $N_{0}\in\MN$ such that for every sequence of stopping times
    $(\tau_{d})_{d\in\MN}$ with $\tau_{d}\leq T$ for all $d\in\MN$, we have:  
    \begin{align}\label{eq:kar22-Aldous}
      \sup_{d\geq N_{0}}\sup_{0\leq\theta\leq
      \delta}\MP_{x}^{d}(\norm{X_{\tau_{d}}^{d}-X_{\tau_{d}+\theta}^{d}}\geq
      \eta)\leq \varepsilon.
    \end{align} 
  \end{enumerate}
  We begin with the first condition: Recall that for all
  $d\in\MN$ the processes $X^{d}$ satisfies
  $\MP_{x}^{d}(\set{X_{t}^{d}\in\cHplus_{d}\colon t\geq 0})=1$. In particular,
  for every fixed $t\geq 0$ it holds that
  $\MP_{x}^{d}(X_{t}^{d}\in\cHplus_{d})=1$. Now, note that $\cHplus_{d}\subseteq
  \cV_{0}\cap \cHplus$ for all $d\in\MN$ and since 
  $\cV$ is compactly embedded in $\cL_{2}(H)$ and $\cHplus$ is a closed subset
  of $\cL_{2}(H)$, we see that also $\cV_{0}\cap\cHplus$ is compact in
  $\cHplus$. Hence, we see that $\MP_{x}^{d}(\set{X_{t}^{d}\in
    \cV_{0}\cap\cHplus})=1$ for every $d\in\MN$, which proves the tightness of
  the sequence of laws of $(X_{t}^{d})_{d\in\MN}$. Since $t\geq 0$ was
  arbitrary, we therefore conclude that condition i) is satisfied. We continue with the
  second condition. For this let $T>0$, $\varepsilon>0$, $\eta>0$ and let
  $(\tau_{d})_{d\in\MN}$ be a sequence of stopping times such that
  $\tau_{d}\leq T$ for all $d\in\MN$. As before in the proof of
  Lemma~\ref{lem:kar22-tightness-square-bound} we consider for every $t\geq 0$
  the decomposition $X^{d}_{t}=\bP_{d}(x)+H_{t}^{d}+\bar{J}_{t}^{d}$ into the
  finite variation part $(H_{t}^{d})_{t\geq 0}$ given by~\eqref{eq:kar22-F}
  and the purely-discontinuous martingale part $(\bar{J}^{d}_{t})_{t\geq 0}$
  in~\eqref{eq:kar22-decomp-martingale}. For the finite-variation part we compute
  \begin{align}\label{eq:kar22-tightness-1}
    \EXspec{\MP_{x}^{d}}{\norm{H_{\tau_{d}}^{d}-H_{\tau_{d}+\theta}^{d}}^{2}}&\leq
                                                                         \EXspec{\MP_{x}^{d}}{\norm{\int_{\tau_{d}}^{\tau_{d}+\theta}\big(\hat{b}_{d}+\hat{B}_{d}(X_{s}^{d})\big)\D
                                                                   s}^{2}}\nonumber\\
                                                      &\leq \theta^{2}
                                                        \EXspec{\MP_{x}^{d}}{\sup_{0\leq
                                                        \tau\leq \theta}
                                                        \big(\norm{\hat{b}}+\norm{\hat{B}}_{\cL(\cH)}\norm{X_{\tau_{d}+\tau}^{d}}^{2}\big)}\nonumber\\
                                                      &\leq
                                                        \theta^{2}\big(\norm{\hat{b}}+\norm{\hat{B}}_{\cL(\cH)}K_{T+\theta}(1+\norm{\bP_{d}(x)}^{2})\big),   
  \end{align}
  where in the last inequality we
  used~\eqref{eq:kar22-tightness-square-bound-Xd} and that $\tau_{d}\leq T$ by
  assumption. Similarly, for the martingale part we find 
  \begin{align}\label{eq:kar22-tightness-2}
    \EXspec{\MP_{x}^{d}}{\norm{\bar{J}_{\tau_{d}+\theta}^{d}\!-\!\bar{J}_{\tau_{d}}^{d}}^{2}}&\leq 4
                                                                         \EXspec{\MP_{x}^{d}}{\big<\bar{J}^{d}\big>_{\tau_{d}+\theta}-\big<\bar{J}^{d}\big>_{\tau_{d}}}\nonumber\\
                                                                       &\leq\!
                                                                         4\EXspec{\MP_{x}^{d}}{\!\int_{\tau_{d}}^{\tau_{d}+\theta}\!\Big(\!\int_{\cHpluso}\!\norm{\xi}^{2}\,m(\D\xi)\!+\!\langle
                                                                         X_{s}^{d},\mu(\cHpluso)\rangle
                                                                         \Big)\D
                                                                         s}\nonumber\\
                                                            &\leq\!
                                                              4\theta \EXspec{\MP_{x}^{d}}{
                                                              \sup_{0\leq\tau\leq
                                                              \theta}\Big(\!\int_{\cHpluso}\!\norm{\xi}^{2}\,m(\D\xi)\!+\!\langle
                                                              X_{\tau_{d}+\tau}^{d},\mu(\cHplus\!\setminus\set{0})\rangle
                                                              \Big)}\nonumber\\
                                                            &\leq 
                                                              \!4\theta\big(\!\int_{\cHpluso}\!\!\norm{\xi}^{2}\,m(\D\xi)\!+\!\norm{\mu(\cHpluso)}K_{T\!+\!\theta}(1\!+\!\norm{\bP_{d}(x)}^{2})\big).  
  \end{align}
  By an application of Markov's inequality we thus see that
  \begin{align*}
    \MP_{x}^{d}\big(\norm{X_{\tau_{d}+\theta}^{d}-X_{\tau_{d}}^{d}}>\eta \big)&\leq
                                                                                          \MP^{d}_{x}\big(\norm{H_{\tau_{d}+\theta}^{d}-H_{\tau_{d}}^{d}}+\norm{\bar{J}_{\tau_{d}+\theta}^{d}-\bar{J}_{\tau_{d}}^{d}}>\eta
                                                                                    \big)\nonumber\\
                                                                                  &\leq
                                                                                    \frac{2}{\eta^{2}}\big(\EXspec{\MP_{x}^{d}}{\norm{H_{\tau_{d}+\theta}^{d}-H_{\tau_{d}}^{d}}^{2}}+\EXspec{\MP_{x}^{d}}{\norm{\bar{J}_{\tau_{d}+\theta}^{d}-\bar{J}_{\tau_{d}}^{d}}^{2}}\big),
  \end{align*}
  and therefore by inserting~\eqref{eq:kar22-tightness-1}
  and~\eqref{eq:kar22-tightness-2} we obtain
  \begin{align}\label{eq:kar22-tightness-3}
    \MP_{x}^{d}\big(\norm{X_{\tau_{d}+\theta}^{d}-X_{\tau_{d}}^{d}}>\eta
    \big)&\leq \theta\frac{\hat{K}_{T+\theta}}{\eta^{2}}(1+\norm{\bP_{d}(x)}^{2}),
  \end{align}
  for a $\hat{K}_{T+\theta}$ which is independent of $d\in\MN$ and continuous
  in $\theta$. Moreover, since $\norm{\bP_{d}(x)}\leq \norm{x}$
  for all $d\in\MN$, we find a $\delta>0$ small enough such that
  \begin{align*}
    \sup_{d\geq N_{0}}\sup_{0\leq\theta\leq
    \delta}\MP^{d}_{x}\big(\norm{X_{\tau_{d}+\theta}^{d}-X_{\tau_{d}}^{d}}>\eta\big)&\leq
                                                                                  \delta
                                                                                  \frac{\hat{K}_{T+\delta}}{\eta^{2}}(1+\norm{x}^{2})\leq
                                                                                  \varepsilon,    
  \end{align*}
  for arbitrary $N_{0}\in\MN$. This proves the second condition above and it
  therefore follows from the Aldous criterion that the sequence 
  $(\MP^{d}_{x})_{d\in\MN}$ is a tight sequence of probability measures on
  $\cB(D(\MRplus,\cHplus))$.       
\end{proof}

\subsection{Weak convergence of the finite-rank operator-valued affine processes}\label{sec:kar22-weak-conv-finite}
In this section we prove weak convergence of the sequence
$(\MP^{d}_{x})_{d\in\MN}$ of laws of $(X^{d})_{d\in\MN}$ given
$X_{0}^{d}=\bP_{d}(x)$ to a unique affine process $X$ with law $\MP_{x}$. By
Proposition~\ref{prop:kar22-tightness} we already know that
$(\MP^{d}_{x})_{d\in\MN}$ is tight, which by the Prokhorov characterization of
relative weak compactness, implies that every subsequence of
$(\MP^{d}_{x})_{d\in\MN}$ admits a weakly convergent subsequence. If we show
that all those convergent subsequences have the same limit $\MP_{x}$, we can
conclude that already $(\MP^{d}_{x})_{d\in\MN}$ converges weakly to $\MP_{x}$,
see also~\cite[Chapter 3]{EK86}. We are thus left with proving
\emph{uniqueness}, which we approach via \emph{martingale problems}. Recall
that for every $d\in\MN$ the process in~\eqref{eq:kar22-Gd-martingale-problem}
is a martingale on $(\Omega,\bar{\cF}^{d},\bar{\MF}^{d},\MP_{x})$, in which
case we say that $X^{d}$, respectively its law $\MP_{x}^{d}$, solves the
\emph{martingale problem} for $\cG^{d}$ with initial condition
$X_{0}^{d}=\bP_{d}(x)$. Next, we formulate a martingale problem for the
operator $\cG$ defined on the set $\set{\E^{-\langle \cdot, u\rangle}\colon
  u\in\cHplus}$ as      
\begin{align}\label{eq:kar22-G-operator}
  \cG\E^{-\langle \cdot,u\rangle}(x)\df \big(F(u)+\langle x,
  R(u)\rangle\big)\E^{-\langle x,u\rangle},\quad x\in\cHplus. 
\end{align}

\begin{definition}\label{def:kar22-martingale-problem} 
  Let $\Omega=D([0,T],\cHplus)$, $\MP$ be a probability measure on
  $\cB(\Omega)$ admitting a canonical process $(X_{t})_{t\geq 0}$. Let $\cG$
  be as in~\eqref{eq:kar22-G-operator} defined on $\cD$ and $x\in\cHplus$. We
  then call $\MP$ a solution to the martingale problem for $\cG$ with initial
  condition $\MP(X_{0}=x)=1$ if for every $f\in\cD$ the process   
  \begin{align}\label{eq:kar22-martingale-problem}
    \left(f(X_{t})-f(x)-\int_{0}^{t}\cG f(X_{s})\D s\right)_{t\geq 0},
  \end{align}
  is a martingale on $(\Omega,\cF,(\cF_{t})_{t\geq 0},\MP)$, where
  $(\cF_{t})_{t\geq 0}$ denotes the natural filtration of $(X_{t})_{t\geq 0}$.  
\end{definition}

That the martingale problem has at least one solution is the assertion of the
following proposition.

\begin{proposition}\label{prop:kar22-weak-convergence}
  Let $x\in\cHplus$. Then every weak limit $\MP_{x}$ of a convergent subsequence
  of $(\MP^{d}_{x})_{d\in\MN}$ solves the martingale problem posed in Definition~\ref{def:kar22-martingale-problem}. Moreover, the canonical
  process of $\MP_{x}$ on $D(\MRplus,\cHplus)$ is continuous in probability.  
\end{proposition}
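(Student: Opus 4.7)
My approach is to pass to the limit in the $\cG^{d_{k}}$-martingale problem of Proposition~\ref{prop:kar22-embedding-affine-main}~ii) along the convergent subsequence, and then to derive stochastic continuity from the limiting (conditional) affine transform formula by a reverse-martingale-convergence argument.

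Fix the weakly convergent subsequence $\MP_{x}^{d_{k}} \Rightarrow \MP_{x}$ and let $X$ denote the coordinate process on $\Omega = D(\MRplus, \cHplus)$. I first identify the at most countable set $J \subseteq \MRplus$ with $\MP_{x}(\Delta X_{t} \neq 0) > 0$ and put $T_{c} \df \MRplus \setminus J$; for $t \in T_{c}$ the evaluation map $\pi_{t}$ is $\MP_{x}$-almost surely continuous on Skorohod space, so finite collections of marginals at times in $T_{c}$ converge weakly by the continuous mapping theorem. Next, for $u \in \cHplus$, times $0 \le s_{1} \le \cdots \le s_{n} \le s < t$ in $T_{c}$ and $g \in C_{b}(\cHplus^{n})$, I would take the limit $k \to \infty$ in
\begin{align*}
  \EXspec{\MP_{x}^{d_{k}}}{g(X_{s_{1}}^{d_{k}},\ldots,X_{s_{n}}^{d_{k}})\Big(\E^{-\langle X_{t}^{d_{k}},\bP_{d_{k}}(u)\rangle} - \E^{-\langle X_{s}^{d_{k}},\bP_{d_{k}}(u)\rangle} - \int_{s}^{t}\!\cG^{d_{k}}\E^{-\langle \cdot,\bP_{d_{k}}(u)\rangle}(X_{r}^{d_{k}})\,\D r\Big)} = 0,
\end{align*}
which holds for every $k$ by Proposition~\ref{prop:kar22-embedding-affine-main}~ii). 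The exponential terms pass to the limit because $\bP_{d_{k}}(u) \to u$ and they are bounded continuous functions of finitely many coordinates evaluated at times in $T_{c}$. For the time integral I would invoke Skorohod's representation theorem to realise an a.s.\ convergent copy, combine $\bP_{d_{k}}(u) \to u$ with Lemma~\ref{lem:kar22-locally-uniform-convergence-Rd} to obtain $F_{d_{k}}(\bP_{d_{k}}(u)) \to F(u)$ and $R_{d_{k}}(\bP_{d_{k}}(u)) \to R(u)$, and draw uniform integrability from the second-moment bound in Lemma~\ref{lem:kar22-tightness-square-bound}, which allows Vitali convergence through the time integral since Skorohod-path convergence yields Lebesgue-a.e.\ pointwise convergence in $r$. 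Density of $T_{c}$ in $\MRplus$ together with right-continuity of paths then extend the resulting identity to all $0 \le s \le t$, so that $\MP_{x}$ solves the martingale problem of Definition~\ref{def:kar22-martingale-problem}.

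For stochastic continuity, I apply the same weak-convergence machinery to the \emph{conditional} affine transform formula for $X^{d_{k}}$ (valid for every $k$ by Markovianity of $X^{d_{k}}$ together with equation~\eqref{eq:kar22-affine-Galerkin-1}) and to the convergence $(\phi_{d_{k}}, \psi_{d_{k}}) \to (\phi, \psi)$ from Proposition~\ref{prop:kar22-existence-Riccati}, to obtain, for $\MP_{x}$ and all $s \le t$,
\begin{align*}
  \EXspec{\MP_{x}}{\E^{-\langle u, X_{t}\rangle} \mid \cF_{s}} = \E^{-\phi(t-s, u) - \langle X_{s}, \psi(t-s, u)\rangle}.
\end{align*}
Letting $s \uparrow t$, reverse martingale convergence on the left and continuity of $(\phi, \psi)$ at $0$ combined with the left-limit $X_{s} \to X_{t-}$ on the right give $\EXspec{\MP_{x}}{\E^{-\langle u, X_{t}\rangle} \mid \cF_{t-}} = \E^{-\langle u, X_{t-}\rangle}$ for every $u \in \cHplus$. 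Since Laplace transforms on $\cHplus$ determine probability laws on $\cHplus$, the conditional law of $X_{t}$ given $\cF_{t-}$ is the Dirac mass at $X_{t-}$, forcing $X_{t} = X_{t-}$ $\MP_{x}$-a.s., which is precisely stochastic continuity of the canonical process.

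The principal technical obstacle will be the convergence of the time integral $\int_{s}^{t}\cG^{d_{k}}\E^{-\langle \cdot, \bP_{d_{k}}(u)\rangle}(X_{r}^{d_{k}})\,\D r$: the integrand is unbounded, growing linearly in $\norm{X_{r}^{d_{k}}}$, and the evaluation maps at times in $[s,t]\setminus T_{c}$ are not Skorohod-continuous, so combining Skorohod representation (for a.s.\ path convergence) with the Vitali-type uniform integrability produced by Lemma~\ref{lem:kar22-tightness-square-bound} is the crucial step.
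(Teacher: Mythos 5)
Your proposal is sound and reaches both conclusions, but it takes a genuinely different, more hands-on route than the paper. For the martingale-problem part the paper does not unpack the limit passage at all: it verifies the two sup-norm convergences $\sup_{x\in\cHplus}|\E^{-\langle x,\bP_{d}(u)\rangle}-\E^{-\langle x,u\rangle}|\to 0$ and $\sup_{x\in\cHplus}|\cG^{d}\E^{-\langle \cdot,\bP_{d}(u)\rangle}(x)-\cG\E^{-\langle \cdot,u\rangle}(x)|\to 0$ (using the continuity of $F$ and $R$ from Lemma~\ref{lem:kar22-locally-uniform-convergence-Rd} and $\norm{\bP_{d}^{\perp}(u)}\to 0$) and then cites the stability lemma for martingale problems, \cite[Lemma 5.1]{EK86}. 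Your cylinder-function/Skorohod-representation/Vitali argument is essentially the content of that lemma written out by hand; what it buys you is robustness: you control the linearly growing factor $\langle x, R_{d}(\bP_{d}(u))\rangle$ through the uniform second-moment bound of Lemma~\ref{lem:kar22-tightness-square-bound}, rather than through the bound $\sup_{x\in\cHplus}\E^{-\langle x,u\rangle}\norm{x}<\infty$ on which the paper's uniform estimate rests and which is delicate when $u$ has a nontrivial kernel. For stochastic continuity the two routes diverge more sharply: the paper simply observes that the Aldous estimate~\eqref{eq:kar22-tightness-3} from the tightness proof is uniform in $d$ and vanishes as $\theta\to 0$, and cites \cite[Theorem 3.3.1]{JM86} to conclude that every limit point is continuous in probability; you instead derive the conditional affine transform formula for the limit and combine martingale convergence along $s\uparrow t$ (note this is L\'evy's \emph{upward} theorem, since $\cF_{s}\uparrow\cF_{t-}$, not reverse martingale convergence) with the measure-determining property of the Laplace transform to force $X_{t}=X_{t-}$ almost surely. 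That argument is correct, but it front-loads a substantial piece of the proof of Theorem~\ref{thm:kar22-main-convergence}, where the affine formula for the limit is obtained from the martingale problem via the time-dependent martingale argument; the paper's citation makes stochastic continuity a one-line corollary of the tightness computation. Neither difference is a gap, but you should be aware that your plan proves more than the proposition asks for at this stage.
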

\begin{proof}
  From the tightness of $(\MP^{d}_{x})_{d\in\MN}$ the existence of a
  weakly convergent subsequence follows from the Prokhorov theorem. Let
  $\MP_{x}$ be such a weak limit of some subsequence
  $(\MP^{d_{n}}_{x})_{n\in\MN}$. We know
  from~\cref{item:kar22-embedding-affine-main-1}, that for every $d\in\MN$
  the process $X^{d}$, respectively its law $\MP^{d}_{x}$, solves the
  martingale problem for $\cG^{d}$ with $X_{0}^{d}=\bP_{d}(x)$, in particular
  this holds for all $(\MP^{d_{n}}_{x})_{n\in\MN}$. Now, note that for every
  $u\in\cHplus$ we have
  \begin{align*}
    \sup_{x\in\cHplus}|\E^{-\langle x,\bP_{d}(u)\rangle}-\E^{-\langle
    x,u\rangle}|&\leq \sup_{x\in\cHplus}\E^{-\langle x,u\rangle}\langle x,
                  u-\bP_{d}(u)\rangle\\
                &\leq \sup_{x\in\cHplus}\E^{-\langle
                  x,u\rangle}\norm{x}\norm{\bP_{d}^{\perp}(u)}\to 0,\quad
                  \text{as }d\to\infty,  
  \end{align*}
  and we also find that
  \begin{align*}
    \sup_{x\in\cHplus}|\cG^{d}\E^{-\langle \cdot, \bP_{d}(u)\rangle}(x)-\cG\E^{-\langle \cdot,u\rangle}(x)|&=\sup_{x\in\cHplus}\big(|\langle x,
                         R_{d}(\bP_{d}(u))-R(u)\rangle|\E^{-\langle
                         x,u\rangle}\big)\\
    &\quad+|F_{d}(\bP_{d}(u))-F(u))|\\
                       &\leq \sup_{x\in\cHplus}\E^{-\langle x,
                         u\rangle}\norm{x}\norm{R_{d}(\bP_{d}(u))-R(u)}\\
                         &\quad+|F_{d}(\bP_{d}(u))-F(u)|\to 0,\quad\text{as }d\to\infty,
  \end{align*}
  where the latter limit holds true as $\sup_{x\in\cHplus}\E^{-\langle
    x,u\rangle}\norm{x}$ is bounded, $F$ and $R$ are continuous on $\cHplus$,
  see Lemma~\ref{lem:kar22-locally-uniform-convergence-Rd}, and
  $\norm{\bP_{d}^{\perp}(u)}=\norm{\bP_{d}(u)-u}\to 0$ as $d\to\infty$. It thus follows from~\cite[Lemma
  5.1]{EK86} that the weak limit $\MP_{x}$ of $(\MP^{d_{n}}_{x})_{n\in\MN}$
  solves the martingale problem in
  Definition~\ref{def:kar22-martingale-problem}. The continuity in probability
  of the canonical process  of $\MP_{x}$ is a consequence of Aldous criterion
  and follows from~\cite[Theorem 3.3.1]{JM86}.   
\end{proof}

Next, we prove that the limit $\MP_{x}$ of every convergent subsequence of
$(\MP_{x}^{d})_{d\in\MN}$ is unique, which in turn
proves~\cref{item:kar22-main-convergence-1} and we also prove the remaining
assertions of Theorem~\ref{thm:kar22-main-convergence}.

\begin{proof}[Proof of Theorem~\ref{thm:kar22-main-convergence}]
  For $x\in\cHplus$, denote by $\MP_{x}$ the limit of some subsequence of
  $(\MP^{d_{n}}_{x})_{n\in\MN}$ and let $X=(X_{t})_{t\geq 0}$ the canonical
  process of $\MP_{x}$ on $\Omega=D(\MRplus,\cHplus)$. By
  Proposition~\ref{prop:kar22-weak-convergence} $(X,\MP_{x})$ is a solution to the
  martingale problem posed in
  Definition~\ref{def:kar22-martingale-problem}. Moreover, let
  $T\geq 0$ arbitrary, $u\in\cHplus$ and define the functions
  $f_u(t,x)\colon [0,T] \times \cHplus \to \MRplus$ by 
  \begin{align*}
    f_u(t,x) = \E^{-\phi(T-t,u)-\langle x, \psi(T-t,u)\rangle},
  \end{align*}
  where $(\phi(\cdot,u),\psi(\cdot,u))$ is the unique solution
  of~\eqref{eq:kar22-Riccati-phi}-\eqref{eq:kar22-Riccati-psi} on $[0,T]$. We
  see that $f_u\in C_b^{1,1}([0,T]\times\cHplus)$ and it thus follows from~\cite[Theorem
  4.7.1]{EK86} that since $X$ solves the martingale problem for
  $\cG$, the process $(X_{t},t)_{t\geq 0}$ solves the associated time-dependent
  martingale problem, i.e. the process
  \begin{align}\label{eq:kar22-time-dependent-2}
    \Big(f_{u}(t,X_{t})-f_{u}(0,x)-\int_{0}^{t}\cG
    f_{u}(s,X_{s})+\frac{\partial}{\partial s}f_{u}(s,X_{s})\,\D s\Big)_{0\leq t\leq T}     
  \end{align}
  is a martingale for every $u\in\cHplus$. Moreover, we see that
  \begin{align}\label{eq:kar22-time-dependent-1}
    \frac{\partial}{\partial t}f_{u}(t,x)&=\Big(\frac{\partial \phi}{\partial
                                           t}(T-t,u)+\langle x,\frac{\partial \psi}{\partial t}(T-t,u)\rangle \Big)f_{u}(t,x)\nonumber\\
                                         &=\left(F(\psi(T-t,u))+\langle x, R(\psi(T-t,u))\rangle\right) f_{u}(t,x)\,,
  \end{align} 
  which inserted into~\eqref{eq:kar22-time-dependent-2} nullifies the term
  $\cG f_{u}(s,X_{s})$, compare with~\eqref{eq:kar22-G-operator}. Hence we see
  that the process $(f_{u}(t,X_{t})-f_{u}(0,x))_{t\leq T}$ must be a
  martingale. This implies in particular, that
  $\EXspec{\MP_{x}}{f_{u}(t,X_{t})}=f_{u}(0,x)$ for all $0\leq t\leq T$,
  i.e. for $t=T$ we obtain
  \begin{align*}
   \EXspec{\MP_{x}}{f_{u}(T,X_{T})}=\EXspec{\MP_{x}}{\E^{\langle X_{T},
    u\rangle}}=\E^{-\phi(T,u)-\langle x, \psi(T,u)\rangle},\quad u\in\cHplus.  
  \end{align*}
  Since $T>0$ was aribtrary, this implies that the affine transform
  formula~\eqref{eq:kar22-affine-Galerkin-theorem} holds true.  Note, that
  since this holds for every $u\in\cHplus$ and the Laplace transform is measure
  determining on $\cHplus$, see~\cite[Lemma A.1]{CKK22b}, it follows that
  $X_{t}$ is unique in law for every fixed $t\geq 0$. But since the process
  $X$ is the solution to the martingale problem in
  Definition~\ref{def:kar22-martingale-problem}, it follows from~\cite[Theorem
  4.4.2 (a)]{EK86} that the pointwise uniqueness already implies the uniqueness in
  distribution (i.e. uniqueness of the solution to the martingale problem on $[0,T]$). Again since $T$
  was arbitrary, this then proves~\cref{item:kar22-main-convergence-1}.\\
  Next we show~\cref{item:kar22-main-convergence-2}. Note that in the first
  part we just proved that the limit of convergent subsequences of
  $(\MP_{x}^{d})_{d\in\MN}$ is given by $\MP_{x}$ and that the associated
  process $X$ satisfies the affine transform formula, i.e. the sequence
  $(X^{d})_{d\in\MN}$ converges weakly to $X$ on $D(\MRplus,\cHplus)$.
  We can thus continue with the convergence rate
  in~\eqref{eq:kar22-main-convergence-rate}. First, note that by standard
  estimates it follows
  from~\eqref{eq:kar22-affine-Galerkin-theorem},~\eqref{eq:kar22-affine-Galerkin}
  that
    \begin{align*}
      \left|\EXspec{\MP_{x}}{\E^{-\langle u,
      X_{t}\rangle}}-\EXspec{\MP^{d}_{x}}{\E^{-\langle u,
      X^{d}_{t}\rangle}}\right|&\leq
      |\phi(t,u)-\phi_{d}(t,\bP_{d}(u))|\\
      &\quad+\norm{x}\norm{\psi(t,u)-\psi_{d}(t,\bP_{d}(u))},  
    \end{align*}
    and it thus follows from Corollary~\ref{cor:kar22-convergence-Galerkin-compact}
    that there exists a $C_{T}$ independent of $d\in\MN$, such that
  \begin{align}
    \sup_{t\in [0,T],\norm{u}_{\cV}\leq 1}
    \left|\EXspec{\MP_{x}}{\E^{-\langle u,X_{t}\rangle}}-\EXspec{\MP^{d}_{x}}{\E^{-\langle u,X^{d}_{t}\rangle}}\right|&\leq C_{T}\norm{\bP^{\perp}_{d}}_{\cL(\cV,\cH)}(1+\norm{x}),
  \end{align}
  Note tat if the conditions $\norm{\mu(\cHpluso)}_{\cV}<\infty$ and
  $B^{*}(\cV_{0})\subseteq \cV_{0}$ do not hold then we see that the
  convergence rate in part i) of Remark~\ref{rem:kar22-main-convergence}
  follows from Proposition~\ref{prop:kar22-existence-Riccati} instead of
  Corollary~\ref{cor:kar22-convergence-Galerkin-compact}.
\end{proof}

\begin{remark}
Note that if $\norm{\mu(\cHpluso)}_{\cV}<\infty$ and $B^{*}(\cV_{0})\subseteq
\cV_{0}$ do not hold, then there still exists a constant $\tilde{K}$,
independent of $d\in\MN$, such that~\eqref{eq:kar22-main-convergence-rate}
holds with right-hand side $\tilde{K}C_{T,d}(1+\norm{x})$ with $C_{T,d}$
in~\eqref{eq:kar22-convergence-Galerkin-CTd}.   
\end{remark}

\bibliographystyle{unsrt}
\bibliography{literatur}
\end{document}